\documentclass[11pt, a4paper,reqno]{amsart}

\usepackage{amsmath}
\usepackage{amsfonts}
\usepackage{esint}
\usepackage{a4wide}
\usepackage{amssymb}
\usepackage{amsthm}
\usepackage{palatino}

\usepackage{mathrsfs}

\usepackage[centertags]{amsmath}
\usepackage{eucal,dsfont,accents,bbm,amsfonts,amssymb,amsthm,amsopn}

\usepackage[usenames]{color}
\definecolor{citegreen}{rgb}{0,0.6,0}
\definecolor{refred}{rgb}{0.8,0,0}
\usepackage[colorlinks, citecolor=citegreen, linkcolor=refred]{hyperref}

\flushbottom
\theoremstyle{plain}

\newtheorem{teo}{Theorem}[section]
\newtheorem{lemma}[teo]{Lemma}
\newtheorem{prop}[teo]{Proposition}
\newtheorem{cor}[teo]{Corollary}
\newtheorem{dfnz}{Definition}[section]

\newtheorem{ackn}{Acknowledgments\!}

\newtheorem{rem}[teo]{Remark}

\theoremstyle{definition}

\theoremstyle{remark}

\numberwithin{equation}{section}

\def\SS{{{\mathbb S}}}
\def\NN{{{\mathbb N}}}

\def\RR{{\mathbb R}}

\def\om{\omega}
\def\k{\kappa}

\def\a{\alpha}
\def\b{\beta}
\def\g{\gamma}
\def\z{\zeta}
\def\e{\eta}
\def\ep{\varepsilon}
\def\l{\lambda}

\def\Ric{{{Ric}}}

\title[Gradient Einstein solitons]{Gradient Einstein solitons}

\author[Giovanni Catino]{Giovanni Catino}
\address[Giovanni Catino]{Dipartimento di Matematica, Politecnico di Milano, Piazza Leonardo da Vinci 32, 20133 Milano, Italy}
\email[]{giovanni.catino@polimi.it}

\author[Lorenzo Mazzieri]{Lorenzo Mazzieri}
\address[Lorenzo Mazzieri]{Scuola Normale Superiore, P.za Cavalieri 7, 56126 Pisa, Italy}
\email{l.mazzieri@sns.it}


\begin{document}

\begin{abstract} In this paper we consider a perturbation of the Ricci solitons equation proposed by J. P. Bourguignon in~\cite{jpb1}. We show that these structures are more rigid then standard Ricci solitons. In particular, we prove that there is only one complete three--dimensional, positively curved, Riemannian manifold satisfying 
$$
Ric -\frac{1}{2} R \, g \, + \, \nabla^2 f \, = \,0\,,
$$
for some smooth function $f$. This solution is rotationally symmetric and asymptotically cylindrical and it represents the 
analogue of the Hamilton's cigar in dimension three. The key ingredient in the proof is the rectifiability of the potential function $f$. It turns out that this property holds also in the Lorentzian setting and for a more general class of structures which includes some gravitational theories.
\end{abstract}

\maketitle

%
%
%
%
%


\section{Introduction and statement of the results}

One of the most significant functional in Riemannian geometry is the Einstein--Hilbert action
$$
g \,\, \longmapsto \,\, \mathcal{E}(g) \,=\, \int_{M} R \, dV_{g} \,,
$$
where $M^n$, $n\geq 3$, is a $n$--dimensional compact differentiable manifold, $g$ is a Riemannian metric on $M^n$ and $R$ is its scalar curvature. It is well known that critical points of this functional on the space of metrics with fixed volume are Einstein metrics (see~\cite[Chapter~4]{besse}). In principle, it would be natural to use the associated (unnormalized) gradient flow 
\begin{equation}
\label{E-flow}
\partial_{t} g \,= \,-2\big(\Ric - \tfrac{1}{2}R\,g\big) \,
\end{equation}
to search for critical metrics. On the other hand, it turns out that such a flow is not parabolic. Hence, a general existence theory, even for short times, is not guaranteed by the present literature. This was one of the main reasons which led Hamilton to introduce the Ricci flow $\partial_t g=-2Ric$ in~\cite{hamilton1}. The Ricci flow has been studied intensively in recent years and plays a key role in Perelman's proof of the Poincar\'e conjecture (see~\cite{perel1},~\cite{perel3} and~\cite{perel2}). For an introduction to Ricci flow, we refer the reader to~\cite{chowluni}.

An important aspect in the treatment of the Ricci flow is the study of Ricci solitons, which generate self--similar solutions to the flow and often arise as singularity models. Gradient Ricci solitons are Riemannian manifolds satisfying 
$$
Ric + \nabla^2 f \, = \, \l \, g \, ,
$$
for some smooth function $f$ and some constant $\lambda \in \RR$. 
For a complete survey on this subject, which has been treated by many authors, we refer the interested reader to~\cite{cao3}~and~\cite{cao2}.

Motivated by the notion of Ricci solitons, it is natural to consider special solutions to the flow~\eqref{E-flow}, whose existence can be proved by ad hoc arguments. In particular, in this paper, we introduce the notion of {\em gradient Einstein solitons}. These are Riemannian manifolds 
satisfying 
$$
Ric -\frac{1}{2} R \, g \, + \, \nabla^2 f \, = \,  \l \, g \, ,
$$
for some smooth function $f$ and some constant $\lambda \in \RR$. As expected, Einstein solitons as well generate self--similar solutions to the Einstein flow~\eqref{E-flow}.

More in general, it is natural to consider on a Riemannian manifold $(M^n,g)$, $n\geq 3$, geometric flows of the type
\begin{equation}
\partial_{t} g \,=\, -2(\Ric - \rho R\,g)\,,
\end{equation}
for some $\rho \in \RR$, $\rho \neq 0$. In a forthcoming paper, we will develop the parabolic theory for these flows, which was first considered by Bourguignon in~\cite{jpb1}. We call these flows Ricci-Bourguignon flows. Here we just notice that we can prove short time existence for every $-\infty<\rho< 1/2(n-1)$. However, as far as the subject of our investigation are self-similar solutions, every value of $\rho$ is allowed. Associated to these flows, we have the following notion of gradient $\rho$--Einstein solitons.
\begin{dfnz}
\label{def-sol}
Let $(M^{n},g)$, $n\geq 3$, be a Riemannian manifold and let $\rho\in\RR$, $\rho\neq 0$. We say that $(M^{n},g)$ is a {\em gradient $\rho$--Einstein soliton} if there exists a smooth function $f:M^{n}\rightarrow \RR$, such that the metric $g$ satisfies the equation
\begin{equation}
\label{soliton}
\Ric + \nabla^{2}f = \rho R\,g + \lambda g\,,
\end{equation}
for some constant $\lambda\in\RR$. 
\end{dfnz}
We say that the soliton is {\em trivial} whenever $\nabla f$ is parallel. As usual, the $\rho$--Einstein soliton is {steady} for $\l=0$, {shrinking} for $\l>0$ and {expanding} for $\l<0$. The function $f$ is called a {\em $\rho$--Einstein potential} of the gradient $\rho$--Einstein soliton. 

Corresponding to special values of the parameter $\rho$, we refer to the $\rho$--Einstein solitons with different names, according to the Riemannian tensor which rules the flow. Hence, for $\rho=1/2$ we will have {\em gradient Einstein soliton}, for $\rho=1/n$ {\em gradient traceless Ricci soliton} and  for $\rho=1/2(n-1)$ {\em gradient Schouten soliton}. In the compact case, arguments based on the maximum principle yield the following triviality result (listed below as Corollary~\ref{cor-cpt}), for solitons corresponding to these special values of $\rho$. 
\begin{teo}
Every compact gradient Einstein, Schouten or traceless Ricci soliton is trivial.
\end{teo}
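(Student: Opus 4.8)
The plan is to establish triviality by showing that the $\rho$-Einstein potential $f$ is constant when $M$ is compact, which immediately forces $\nabla f$ to be parallel (in fact zero). The natural strategy is to take the trace of the soliton equation~\eqref{soliton} and combine it with a suitable divergence or contracted second Bianchi identity to produce an elliptic equation for $f$ (or for $R$) to which the maximum principle applies.

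First I would trace equation~\eqref{soliton}. Since $\trace(\Ric) = R$, $\trace(\nabla^2 f) = \Delta f$, and $\trace(g) = n$, tracing gives
\begin{equation}
\label{trace-eq}
R + \Delta f = n\rho R + n\lambda\,,
\end{equation}
so that $\Delta f = (n\rho - 1)R + n\lambda$. Next I would take the divergence of~\eqref{soliton}. Using the contracted second Bianchi identity $\div \Ric = \tfrac{1}{2}\nabla R$ and the commutation formula $\div \nabla^2 f = \nabla \Delta f + \Ric(\nabla f, \cdot)$, the divergence of the left-hand side yields terms involving $\nabla R$, $\nabla \Delta f$ and $\Ric(\nabla f)$, while the right-hand side produces $\rho\,\nabla R$. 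Substituting $\Delta f$ from~\eqref{trace-eq} should collapse this into a clean first-order relation between $\nabla R$ and $\Ric(\nabla f)$, of the schematic form $\bigl(\rho - \tfrac{1}{2} - (n\rho-1)\bigr)\nabla R = \Ric(\nabla f)$, i.e. an expression of $\Ric(\nabla f)$ in terms of $\nabla R$.

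The crucial observation is that for the three distinguished values $\rho = 1/2$, $\rho = 1/n$, and $\rho = 1/2(n-1)$, the coefficient $(n\rho - 1)$ is arranged so that~\eqref{trace-eq} reduces to a maximum-principle-ready equation: for $\rho = 1/2$ one gets $\Delta f = \tfrac{n-2}{2}R + n\lambda$, and more importantly these special $\rho$ are exactly the ones for which one can derive an equation of the form $\Delta u = \langle \nabla u, \nabla f\rangle$ (or an inequality with the right sign) for $u = R$ or $u = |\nabla f|^2$ after a further Bochner-type computation. On a compact manifold, a function satisfying $\Delta u - \langle X, \nabla u\rangle = 0$ with no zeroth-order term must be constant by the strong maximum principle; integrating $\Delta f$ over $M$ against $dV_g$ forces $\int_M \Delta f = 0$, which together with constancy of $R$ pins down $\lambda$ and shows $\Delta f \equiv 0$, hence $f$ constant.

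I expect the main obstacle to be the algebra of the divergence step, specifically verifying that the three exceptional values of $\rho$ are precisely those for which the coefficients conspire to eliminate the bad zeroth-order or cross terms and leave a drift-Laplace equation with a definite sign. The generic $\rho$ will leave a genuine $R$-dependent zeroth-order term that obstructs the maximum principle, so the proof must exploit the specific identities $n\cdot\tfrac{1}{2} - 1 = \tfrac{n-2}{2}$, $n\cdot\tfrac{1}{n} - 1 = 0$, and $n\cdot\tfrac{1}{2(n-1)} - 1 = -\tfrac{n-2}{2(n-1)}$; for $\rho = 1/n$ the trace equation already gives $\Delta f = n\lambda$ constant, which is the cleanest case, while the other two will require the contracted Bianchi computation to close. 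Once the drift-Laplace equation is in hand, the maximum principle and an integration by parts finish the argument, yielding $\nabla f$ parallel and hence triviality.
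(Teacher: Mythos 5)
Your first two steps coincide with the paper's: tracing gives $\Delta f = (n\rho-1)R + n\lambda$, and the divergence computation produces exactly the coefficient you guessed, namely $\tfrac{1-2(n-1)\rho}{2}\,\nabla R = \Ric(\nabla f,\cdot\,)$; your treatment of the traceless Ricci case $\rho = 1/n$ (where $\Delta f = n\lambda$, integration forces $\lambda = 0$, hence $f$ constant) is also complete and is what the paper does. The genuine gap is the mechanism you propose for the other two values. Taking the divergence of the identity above yields
\[
\big(1-2(n-1)\rho\big)\Delta R \,=\, \langle \nabla R, \nabla f\rangle \,+\, 2\big( \rho R^{2}-|\Ric|^{2}+\lambda R \big)\,,
\]
and the zeroth-order term $2(\rho R^2 - |\Ric|^2 + \lambda R)$ does \emph{not} disappear for $\rho = 1/2$ or $\rho = 1/2(n-1)$: it contains $|\Ric|^2$, which cannot be expressed through $R$ and $f$, so no value of $\rho$ makes the coefficients "conspire" to leave a drift--Laplace equation free of zeroth-order terms. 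The verification you defer as "the main obstacle" would in fact fail, and with it the strong-maximum-principle argument you build on it.

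What actually closes the two hard cases is different in kind, and different for each. For the Schouten value $\rho = 1/2(n-1)$ the coefficient of $\nabla R$ (and of $\Delta R$) vanishes identically, so the divergence identity degenerates into the constraint $\Ric(\nabla f, \cdot\,) = 0$; this is the crucial structural fact, because it upgrades $|\Ric|^2 \geq R^2/n$ to $|\Ric|^2 \geq R^2/(n-1)$ and gives $\langle \nabla R, \nabla f\rangle \geq \tfrac{1}{n-1}R\big(R - 2(n-1)\lambda\big)$. Evaluating at a maximum point of $R$ (after first running the general case $\rho \leq 1/2(n-1)$ to reduce to $R>0$, $\lambda>0$) yields $R_{\max} \leq 2(n-1)\lambda$, which contradicts the integrated trace identity $\lambda = \tfrac{1-n\rho}{n}\fint_M R\, dV_g \leq \tfrac{n-2}{2n(n-1)}R_{\max}$. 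For the Einstein value $\rho = 1/2$, which lies in the range $\rho > 1/n$, the pointwise maximum principle genuinely does not close: at a maximum of $R$ one only obtains $R_{|q}\big(\tfrac{n-2}{2n}R_{|q} + \lambda\big) \geq 0$, which is perfectly consistent with $\lambda = -\tfrac{n-2}{2n}\fint_M R\,dV_g$ and yields no contradiction. The paper instead integrates the displayed equation over $M$ (killing the $\Delta R$ term), integrates the drift term by parts against $\Delta f = (n\rho-1)R + n\lambda$, applies $|\Ric|^2 \geq R^2/n$, and substitutes the value of $\lambda$ to arrive at $0 \leq \big(\fint_M R\,dV_g\big)^2 - \fint_M R^2\, dV_g \leq 0$, whence $R$ is constant by Cauchy--Schwarz and the soliton is trivial. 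So your proposal starts where the proof starts, but the two analytic endgames --- the degenerate constraint plus max-principle-versus-integral-identity contradiction for Schouten, and the global integral Cauchy--Schwarz argument for Einstein --- are missing and cannot be replaced by the zeroth-order-free maximum principle you describe.
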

To deal with the noncompact case, it is useful to introduce the following notion of {\em rectifiability}. We say that a smooth function $f:M^n\rightarrow \RR$ is {\em rectifiable} in an open set $U \subset M^n$ if and only if $|\nabla f_{|U}|$ is constant along every regular connected component of the level sets of $f_{|U}$. In particular, it can be seen that $f_{|U}$ only depends on the signed distance $r$ to the regular connected component of some of its level sets. If $U= M^n$, we simply say that $f$ is rectifiable. Consequently, a gradient soliton is called rectifiable if and only if it admits a rectifiable potential function. 
The rectifiability turns out to be one of main property of the $\rho$--Einstein solitons, as we will show in the following Theorem.
\begin{teo}\label{teo-esol} Every gradient $\rho$--Einstein soliton is rectifiable.
\end{teo}
The reason for considering $n\geq 3$ in Definition~\ref{def-sol} and thus in Theorem~\ref{teo-esol} is that for $n=2$ equation~\eqref{soliton} reduces to the gradient Yamabe solitons equation (see~\cite{dasksesum}). The rectifiability of the potential function, in this case, follows easily from the structural equation and it has been used to describe the global structure of these solitons (see~\cite{caosunzhang} and~\cite{catmantmazz}).

It is worth noticing that Theorem~\ref{teo-esol} fails to be true in the case of gradient Ricci solitons. In fact, even though all of the easiest nontrivial examples -- such as the Gaussian soliton and the round cylinder in the shrinking case, or the Hamilton's cigar (also known in the physics literature as Witten's black hole) and the Bryant soliton in the steady case -- are rectifiable, it is easy to check, for instance, that the Riemannian product of rectifiable steady gradient Ricci solitons gives rise to a new steady soliton, which is generically not rectifiable.  

A well known claim of Perelman~\cite{perel1}, concerning gradient steady Ricci solitons, states that in dimension $n=3$ the Bryant soliton is the only complete noncompact gradient steady Ricci soliton with positive sectional curvature. Despite some recent important progresses, it remains a big challenge to prove this claim. Here, we notice that the rectifiabilty of the Ricci potential would imply the Perelman's claim. In this direction we have the following result.
\begin{teo}
\label{teo-erot1}
Up to homotheties, there exists only one three--dimensional gradient steady $\rho$--Einstein soliton with $\rho < 0$ or $\rho \geq 1/2$ and positive sectional curvature, namely the rotationally symmetric one constructed in Theorem~\ref{teo-warp-steady}.
\end{teo}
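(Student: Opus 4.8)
The plan is to combine the rectifiability granted by Theorem~\ref{teo-esol} with the three--dimensional identities coming from the contracted second Bianchi identity, and then to reduce the problem to an ODE analysis for a rotationally symmetric model. Since the soliton is steady, $\lambda=0$ and the structural equation reads $\Ric + \nabla^2 f = \rho R\, g$. By Theorem~\ref{teo-esol} the potential $f$ is rectifiable, so on the open set $\Omega=\{\nabla f \neq 0\}$ I would use the signed distance $r$ to a regular connected level set as a coordinate: $f=f(r)$, the norm $|\nabla f|=|f'(r)|$ is constant on each level set, the metric splits as $g = dr^2 + g_r$, and $\nu := \partial_r = \nabla f/|\nabla f|$ is the unit normal. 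Writing the structural equation in a frame $\{\nu,e_2,e_3\}$ adapted to this splitting, the normal--tangential components give $\Ric(\nu,X)=0$ for every $X$ tangent to the level set $\Sigma_r$ — so $\nu$ is an eigenvector of $\Ric$ — while the purely tangential components express the second fundamental form $h$ of $\Sigma_r$ in terms of the restricted Ricci tensor, namely $f'\, h = \rho R\, g_r - \Ric|_{\Sigma_r}$.

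Next I would extract the scalar information. Tracing the structural equation gives $\Delta f = (n\rho-1)R$, and taking its divergence together with the contracted second Bianchi identity $\div \Ric = \tfrac12\nabla R$ yields
\[ \Ric(\nabla f) \,=\, \Big(\tfrac12 - (n-1)\rho\Big)\nabla R\,. \]
In dimension $n=3$ the coefficient equals $\tfrac12-2\rho$, which is nonzero on the range $\rho<0$ or $\rho\ge 1/2$ (this range avoids the value $\rho=1/4$ at which it vanishes). Since $\nabla f$ is an eigenvector of $\Ric$, the displayed identity forces $\nabla R$ to be parallel to $\nabla f$; hence $R$, the normal Ricci eigenvalue $\mu_1=\Ric(\nu,\nu)$, and the mean curvature $H=\tr_{\Sigma_r} h$ are all constant along each level set, i.e. functions of $r$ alone. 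At this stage the level sets are already constant mean curvature hypersurfaces.

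The heart of the argument — and the step I expect to be the main obstacle — is to upgrade this to full rotational symmetry by showing that each $\Sigma_r$ is a totally umbilical round sphere. Because $\nu$ is a Ricci eigenvector, in dimension three every mixed Riemann component $R(X,Y,Z,\nu)$ with $X,Y,Z$ tangent to $\Sigma_r$ vanishes; the Codazzi equation then shows that $h$ is a Codazzi tensor on the surface $\Sigma_r$. As $R$, $f'$ and $H$ are constant on $\Sigma_r$, the trace--free part $\mathring h$ is a trace--free Codazzi tensor of constant trace, hence rigid once identified with the real part of a holomorphic quadratic differential on $\Sigma_r$. Using the positivity of the sectional curvature through the Gauss equation $K^{\Sigma_r}=K_{23}+\det h$ (with $K_{23}$ the ambient sectional curvature along $\Sigma_r$) to control the conformal type and topology of the level sets, one concludes $\mathring h\equiv 0$, so $h=\tfrac{H}{2}\,g_r$ and the two tangential Ricci eigenvalues coincide. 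The evolution $\partial_r g_r = 2h = H(r)\,g_r$ then integrates to $g_r=\phi(r)^2\,\hat g$ for a fixed metric $\hat g$, which by positivity is the round metric on $S^2$. Controlling the conformal type of a possibly noncompact and positively (but a priori decaying) curved level set is the delicate point here, and is where I would concentrate the technical effort, most likely by establishing the vanishing of the Cotton tensor and invoking the resulting local conformal flatness.

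Finally, with $g=dr^2+\phi(r)^2\,g_{S^2}$ and $f=f(r)$, the structural equation collapses to a second--order ODE system in $\phi$ and $f$. Requiring smooth closure at a pole, completeness, and positivity of the sectional curvature fixes the admissible initial data up to the homothety symmetry $g\mapsto c\,g$ of the steady structural equation; standard ODE uniqueness then identifies the solution with the rotationally symmetric soliton produced in Theorem~\ref{teo-warp-steady}, which gives the claimed uniqueness up to homotheties.
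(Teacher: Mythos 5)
Your overall strategy is the same as the paper's: rectifiability (Theorem~\ref{teo-esol}) gives the adapted coordinate $r$ and the constancy of $R$, $|\nabla f|$ and $H$ along level sets; rotational symmetry reduces the problem to the warped--product situation; and uniqueness up to homotheties is then delegated to Theorem~\ref{teo-warp-steady}, exactly as in the paper's Corollary~\ref{cor-rot}. Where you diverge is the umbilicity step: you make $h$ a Codazzi tensor on each leaf (using that $\nu$ is a Ricci eigenvector, so in dimension three all mixed components $R(X,Y,Z,\nu)$ vanish) and then invoke the rigidity of trace--free Codazzi tensors, i.e.\ holomorphic quadratic differentials. The paper instead exploits that the leaves are two--dimensional: by Proposition~\ref{prop-level} the induced scalar curvature $R^{\Sigma}$ is constant on each leaf, which in dimension two already means constant Gauss curvature, with positivity coming from the Gauss equation and the positive--definiteness of $h$. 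Your route is legitimate and even has the merit of directly producing $\partial_r g_r = H(r)\, g_r$, which is what actually yields the warped--product form.

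However, there is a genuine gap, and it sits exactly where you say you would ``concentrate the technical effort'': you never establish that the level sets are \emph{compact} surfaces of genus zero, and without that your key step fails. A trace--free Codazzi tensor on a noncompact simply connected surface need not vanish (on $\mathbb{C}$ every $\varphi(z)\,dz^{2}$ with $\varphi$ holomorphic gives one), so the quadratic--differential rigidity genuinely requires compactness and genus zero; the same issue blocks the paper's constant--curvature argument if the leaves are noncompact. Your proposed fallback, proving vanishing of the Cotton tensor and deducing local conformal flatness, does not help in dimension three: there the Cotton tensor is not forced to vanish (its vanishing \emph{is} conformal flatness), and establishing it for these solitons is essentially as hard as the theorem itself; the paper only uses that device in Theorem~\ref{teo-lcf}, for $n\geq 4$, where local conformal flatness is an \emph{assumption}. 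What the paper actually does is a short convexity argument that is absent from your proposal: in dimension three, positive sectional curvature is equivalent to the Einstein tensor $\Ric-\frac12 R\,g$ being negative definite, so in the steady case with $\rho\geq 1/2$ (where $R>0$) the soliton equation gives $\nabla^{2}f=\rho R\,g-\Ric\geq \frac12 R\,g-\Ric>0$, i.e.\ $f$ is strictly convex (for $\rho<0$ the sign reverses and one works with $-f$). Hence $M^{3}$ is diffeomorphic to $\RR^{3}$, the level sets of $f$ are compact, and $f$ has exactly one critical point: if it had none, the manifold would have two ends, and since $\Ric\geq 0$ the Cheeger--Gromoll theorem would split off a line, contradicting positive sectional curvature. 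This is precisely the ingredient that gives you compact genus--zero leaves (so your holomorphic quadratic differentials vanish) and the pole needed for your final ODE matching with Theorem~\ref{teo-warp-steady}; without it, neither your argument nor the paper's goes through.
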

Theorem~\ref{teo-erot1} gives further evidences of the validity of Perelman's claim and could possibly be used to prove stability results for the Bryant soliton in the class of three-dimensional gradient steady Ricci solitons with positive sectional curvature. For $\rho = 1/2$, the only admissible three-dimensional gradient steady Einstein soliton with positive sectional curvatures turns out to be asymptotically cylindrical with linear volume growth. In other words, this soliton is the natural generalization of the two--dimensional Hamilton's cigar and we decided to call it {\em Einstein's cigar}. An immediate consequence of Theorem~\ref{teo-erot1} is the three--dimensional analogue of the Hamilton's uniqueness result for complete noncompact gradient steady Ricci solitons with positive curvature in dimension two (see~\cite{hamilton5}).  
\begin{cor} 
Up to homotheties, the only complete three--dimensional gradient steady Einstein soliton with positive sectional curvature is the Einstein's cigar.
\end{cor}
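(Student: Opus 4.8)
The plan is to deduce the statement directly from Theorem~\ref{teo-erot1} by specializing to the Einstein case $\rho = 1/2$. According to the naming convention fixed right after Definition~\ref{def-sol}, a gradient steady Einstein soliton is exactly a gradient steady $\rho$--Einstein soliton with $\rho = 1/2$, so that equation~\eqref{soliton} with $\lambda = 0$ reads $\Ric + \nabla^{2}f = \tfrac{1}{2} R\, g$. Since the value $\rho = 1/2$ lies in the range $\rho \geq 1/2$ covered by Theorem~\ref{teo-erot1}, that theorem applies verbatim: up to homotheties, there is a unique three--dimensional gradient steady Einstein soliton with positive sectional curvature, and it coincides with the rotationally symmetric soliton produced in Theorem~\ref{teo-warp-steady}. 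First I would therefore record this specialization and observe that it already furnishes both the existence and the uniqueness asserted in the corollary.

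What remains is to match this unique soliton with the object named the Einstein's cigar. I would invoke Theorem~\ref{teo-warp-steady} to confirm that, for $\rho = 1/2$, the constructed rotationally symmetric metric is complete, so that it is an admissible competitor in the class the corollary ranges over, and to read off its asymptotic profile. The analysis of the associated ODE system there shows that the warping function forces the metric to be asymptotically cylindrical with linear volume growth, which are precisely the defining features used in the paragraph preceding the statement to christen the soliton. This identification of the qualitative profile is the only genuinely new point to verify, since existence and uniqueness are already settled by Theorems~\ref{teo-erot1} and~\ref{teo-warp-steady}.

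Because the corollary is a true specialization, there is no substantial independent obstacle to overcome: all the analytic difficulty is concentrated in the proof of Theorem~\ref{teo-erot1} (built on the rectifiability of $f$ from Theorem~\ref{teo-esol}, the resulting rotational symmetry, and the classification of the admissible ODE profiles) and in the explicit construction of Theorem~\ref{teo-warp-steady}. The one subtlety I would keep an eye on is checking that the hypothesis of strictly positive sectional curvature is genuinely realized by the $\rho = 1/2$ profile, so that the soliton singled out by Theorem~\ref{teo-erot1} is not excluded by any degeneracy and indeed agrees, as a Riemannian manifold, with the asymptotically cylindrical Einstein's cigar described above.
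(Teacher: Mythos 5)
Your proposal is correct and follows essentially the same route as the paper: the corollary is obtained by specializing Theorem~\ref{teo-erot1} (i.e.\ Corollary~\ref{cor-rot}) to $\rho=1/2$, and then observing that in dimension $n=3$ one has $1/2=1/(n-1)$, so the unique rotationally symmetric soliton of Theorem~\ref{teo-warp-steady} is by definition the cigar--type soliton, christened the Einstein's cigar via its asymptotically cylindrical profile and linear volume growth from Proposition~\ref{prop-asymp}. The only cosmetic difference is that the asymptotics you cite come from Proposition~\ref{prop-asymp} rather than from Theorem~\ref{teo-warp-steady} itself, but this does not affect the argument.
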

Among all the $\rho$--Einstein solitons, a class of particular interest is given by gradient Schouten solitons, namely Riemannian manifolds satisfying 
$$
Ric + \nabla^2 f \, = \, \frac{1}{2(n-1)} R \, g \, + \, \l \, g \, ,
$$
for some smooth function $f$ and some constant $\l\in\RR$.
Exploiting the rectifiability obtained in Theorem~\ref{teo-esol}, it is possible to achieve some classification results for this class of metrics. In the steady case, we can prove the following triviality result, which holds true in every dimension without any curvature assumption.
\begin{teo}
\label{teo-sssteady}
Every complete gradient steady Schouten soliton is trivial, hence Ricci flat.
\end{teo}
In particular, every complete three--dimensional gradient steady Schouten soliton is isometric to a quotient of $\RR^3$.
In analogy with Perelman's classification  of three-dimensional gradient shrinking Ricci solitons~\cite{perel1}, subsequently proved without any curvature assumption in~\cite{caochenzhu}, we have the following theorem. 
\begin{teo}
\label{teo-ssshrinking}
Every complete three--dimensional gradient shrinking Schouten soliton is isometric to a finite quotient of either $\SS^3$ or $\RR^3$ or $\RR \times \SS^2$.
\end{teo}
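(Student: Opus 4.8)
The whole classification rests on an algebraic rigidity forced by the Schouten value $\rho=\tfrac{1}{2(n-1)}$. Taking the divergence of the soliton equation $\Ric+\nabla^2 f=\tfrac14 R\,g+\lambda g$ and inserting the contracted second Bianchi identity $\div\Ric=\tfrac12\nabla R$, the commutation formula $\div\nabla^2 f=\nabla\Delta f+\Ric(\nabla f)$, and the traced equation $\Delta f=-\tfrac14R+3\lambda$, one checks that every term proportional to $\nabla R$ carries the factor $(n-1)\rho-\tfrac12$, which vanishes precisely in the Schouten case. What survives is the pointwise identity $\Ric(\nabla f)=0$, valid in all dimensions. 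Pairing the soliton equation with $\nabla f$ and using this, I also record $\tfrac12\nabla|\nabla f|^2=(\tfrac14 R+\lambda)\nabla f$, so that $\nabla f$ is everywhere a null eigenvector of the Ricci tensor.

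If $\nabla f\equiv 0$ the soliton is trivial and $\Ric=(\tfrac14R+\lambda)g$ forces $R\equiv 12\lambda>0$; in dimension three this means constant positive curvature, so $(M,g)$ is a finite quotient of $\SS^3$. The compact case always reduces to this, via the triviality result for compact Schouten solitons. Assume henceforth $\nabla f\not\equiv0$ and work on $\Omega=\{\nabla f\neq0\}$. By the rectifiability Theorem~\ref{teo-esol} the metric can be written on $\Omega$ as $g=dr^2+g_r$ with $f=f(r)$ and the level surfaces $\Sigma_r=\{r=\mathrm{const}\}$ as slices; the identity above says the radial Ricci curvature $\Ric(\partial_r,\cdot)$ vanishes.

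In a frame adapted to the splitting, $\nabla f/|\nabla f|$ is the zero eigenvector of $\Ric$ and the remaining two eigenvalues $a,b$ satisfy $R=a+b$. Writing the Schouten tensor as $S=\lambda g-\nabla^2 f$ and differentiating, the Cotton tensor becomes $C_{ijk}=R_{ijkl}\nabla^l f$; feeding in $\Ric(\nabla f)=0$ one obtains the clean relation $|C|^2=(a-b)^2|\nabla f|^2$. The decisive step is therefore to prove $C\equiv0$, equivalently $a=b$, equivalently that the slices $\Sigma_r$ are totally umbilic with constant Gauss curvature. I would establish this through a weighted integral identity, showing $\int_M|C|^2e^{-f}\,dV=0$ by integrating a divergence identity for the Cotton tensor against the weight $e^{-f}$ and using $\Ric(\nabla f)=0$ to kill the bulk terms. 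This is the main obstacle: the formal computation is short, but making the integration by parts rigorous on a complete noncompact soliton demands a priori growth control on $f$ and on the curvature, and it is here that the shrinking sign $\lambda>0$ and rectifiability must be used jointly.

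Once $C\equiv0$, the umbilicity of the slices together with the constancy of $|\nabla f|$ and of the mean curvature on each $\Sigma_r$ upgrades the splitting to a genuine warped product $g=dr^2+\phi(r)^2\bar g$ with $\bar g$ a two-dimensional space form, and $a=b$ forces the radial sectional curvatures $-\phi''/\phi$ to vanish, so $\phi$ is affine. Inserting this into the soliton equation leaves two possibilities. If $\phi$ is constant the metric splits as $\RR\times\Sigma$, and the tangential equation gives $\lambda=\tfrac12 K_\Sigma>0$, whence $\Sigma=\SS^2$ and $(M,g)=\RR\times\SS^2$. If $\phi$ is non-constant it must vanish at some $r_*$, and smoothness and completeness at the collapsing slice force $\bar g$ to be the unit round metric closing up to a regular point, which makes the metric flat, i.e. $\RR^3$ with the Gaussian potential. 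Together with the Einstein case this yields exactly the finite quotients of $\SS^3$, $\RR^3$ and $\RR\times\SS^2$, as claimed.
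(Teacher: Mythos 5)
Your structural setup is correct and largely parallels the paper: the identity $\Ric(\nabla f,\cdot)=0$ forced by $\rho=\tfrac14$, the reduction of the compact and trivial cases to $\SS^3$, the use of rectifiability, and even the formula $C_{abc}=-R_{cbad}\nabla_d f$ with $|C|^2=(a-b)^2|\nabla f|^2$ are all right (one small slip: the vanishing of the radial curvature $-\phi''/\phi$ follows from $\Ric(\partial_r,\partial_r)=0$, not from $a=b$). But there is a genuine gap exactly where you flag ``the main obstacle'': the vanishing $\int_M|C|^2e^{-f}\,dV_g=0$ is never established. Two things are missing. First, the weighted divergence identity for the Cotton tensor is never written down; you assert that $\Ric(\nabla f,\cdot)=0$ kills the bulk terms, but its existence is not obvious, and the analogous identities in the literature are derived from the Ricci soliton equation and do not transfer automatically to Schouten solitons. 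Second, even granting such an identity, the integration by parts on a complete noncompact manifold requires a priori control on $f$, on the curvature, and on volumes, which you explicitly defer. An acknowledged but unresolved obstacle is still a gap, and here it is the heart of the theorem: everything else in your proposal is comparatively routine.

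For comparison, the paper's decisive step is formulated differently and is carried out in full. It does not prove $C\equiv0$ directly; it proves by contradiction that some regular level set must have $R^{\Sigma}>0$. Assuming $R^{\Sigma}\le 0$ everywhere, it derives $0<\int_{\Omega}R^{\Sigma}|\nabla f|^2e^{-f}\,dV_g<+\infty$ on $\Omega=\{1<f\}\cap(0,+\infty)\times\Sigma_0$, using $\Delta f=-\tfrac14 R+3\lambda$, $\langle\nabla R,\nabla f\rangle=2|\Ric|^2-\tfrac12 R^2-2\lambda R$, the bound $|\Ric|^2\ge R^2/3$, and the nonnegativity of the boundary term $\int_{\partial\Omega}R|\nabla f|e^{-f}\,d\sigma_g$ (here $R\ge0$ comes from the ancient-solution argument). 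Making this rigorous occupies most of the proof: Step 1 shows $R\le 8\lambda$ on $\Omega$ by an ODE argument along the integral curves of $\nabla f$; Step 2 proves quadratic upper and lower growth of $f$, via the substitute for Hamilton's identity $2\lambda f+(f'(0))^2\le|\nabla f|^2\le af+(f'(0))^2$ and an adaptation of Cao--Zhou; Step 3 proves a volume growth estimate for the sublevel sets of $f$. Only then do uniformization, $\Ric(\nabla f,\cdot)=0$ (your $\phi''=0$), and the sign $\lambda>0$ yield the dichotomy $\RR\times\SS^2$ versus flat $\RR^3$, which matches your final step. So your Cotton-tensor reformulation is a legitimate and arguably cleaner target, but to become a proof it needs both the actual derivation of the weighted identity and an analytic package equivalent to the paper's Steps 1--3.
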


The plan of the paper proceeds as follows. In Section~\ref{sect-rect} we introduce the notion of {\em generalized Ricci potential}, which extends the concept of potential function for gradient $\rho$--Einstein soliton. In particular, these structures include some interesting examples of gravitational theories in Lorentzian setting (for instance, see~\cite{will}). We then prove local rectifiability for the subclass of {\em nondegenerate} generalized Ricci potential in the sense of Definition~\ref{def-nondeg} below. Finally, we describe the geometric properties of the regular connected components of their level sets. 

In Section~\ref{sect-esol}, after proving some triviality results for compact gradient $\rho$--Einstein solitons, we prove Theorem~\ref{teo-esol} (listed below as Theorem~\ref{teo-esolrect}) and we exploit the rectifiability to show the rotational symmetry of complete noncompact gradient $\rho$--Einstein solitons with positive sectional curvature under suitable assumptions (see Theorems~\ref{teo-rot} for the three--dimensional case and Theorem~\ref{teo-lcf} for the locally conformally flat case in every dimension). 

In Section~\ref{sect-warped}, motivated by the results obtained in Section~\ref{sect-esol}, we study complete simply connected gradient $\rho$--Einstein solitons, which are {\em warped products with canonical fibers}. 
In the complete noncompact case, we have that either the solution splits a line or it has positive sectional curvature everywhere. In the first case, we have that the soliton must be homothetic to either a round cylinder $\RR \times \SS^{n-1}$, or to the hyperbolic cylinder $\RR \times \mathbb{H}^{n-1}$ or to the flat $\RR^n$, as it is proven in Theorem~\ref{teo-warped-cyl}. In the case where the soliton has positive sectional curvature, we only focus on the steady case and we prove in Theorem~\ref{teo-warp-steady} some existence ($\rho < 1/2(n-1)$ or $\rho \geq 1/(n-1)$) and non existence results ($1/2(n-1) \leq \rho <1/(n-1)$). As a consequence of Theorem~\ref{teo-warp-steady} and the results in Section~\ref{sect-esol}, we obtain Theorem~\ref{teo-erot1} (listed below as Corollary~\ref{cor-rot}) and Corollary~\ref{cor-lcf}, which gives the classification of complete $n$--dimensional locally conformally flat gradient steady $\rho$--Einstein solitons with positive sectional curvature. In Proposition~\ref{prop-asymp}, we describe the asymptotic behavior of the solutions constructed in Theorem~\ref{teo-warp-steady}. In particular, it turns out that for $\rho=1/(n-1)$ the rotationally symmetric steady soliton is asymptotically cylindrical and provides the $n$--dimensional generalization of the Hamilton's cigar. We refer to these solutions as {\em cigar--type solitons}. 

In Section~\ref{Schouten solitons}, we focus on the case of Schouten solitons, which corresponds to $\rho = 1/2(n-1)$ and we prove Theorems~\ref{teo-sssteady} and \ref{teo-ssshrinking}, listed below as Theorem~\ref{teo-ssteady} and Theorem~\ref{teo-sshrinking}, respectively. Finally, in Section~\ref{sect-open}, we list some open questions and concluding remarks.

\

\

\begin{ackn} The authors are partially supported by the Italian project FIRB--IDEAS ÒAnalysis and BeyondÓ. They wish to thank Carlo Mantegazza for several interesting comments and discussions.
\end{ackn}

\

\section{Rectifiability and generalized Ricci potentials}
\label{sect-rect}

In this section we prove a local version of the rectifiability for a wide class of structures, which includes gradient $\rho$--Einstein solitons. To describe this class, we introduce the notion of generalized Ricci potential.

\begin{dfnz}
\label{def-genpot} Let $(M^{n},g)$, $n\geq 3$, be a Riemannian manifold and let $f:M^{n}\rightarrow \RR$ be a smooth function on it. We say that $f$ is a {\em generalized Ricci potential} for $(M^{n},g)$ around a regular connected component $\Sigma_{c}$ of the level set $\{f=c\}$ if there exist an open neighborhood $U$ of $\Sigma_{c}$ and smooth functions $\a,\b,\g,\z,\e : f(U)\rightarrow \RR$, such that the metric $g$ satisfies the equation
\begin{equation}\label{grp}
\Ric + \a \nabla^{2}f \,=\, \b df\otimes df + \g R\,g + \z g + \e P \,, \quad\quad\mbox{on}\,\,U\,,
\end{equation}
for some symmetric, parallel $(2,0)$--tensor $P$, such that $P(\nabla f,\cdot)=0$ and $P \circ \Ric = \Ric \circ P$. 
\end{dfnz}
We present now some examples of generalized Ricci potential. 
\begin{enumerate}
\item Gradient Ricci solitons, corresponding to $(\a,\b, \g, \zeta, \eta) = (1,0, 0, \l ,0)$, for some $\l\in \RR$.
\smallskip
\item Gradient $\rho$--Einstein solitons, corresponding to $(\a,\b, \g, \zeta, \eta) = (1,0,\rho,\l,0)$, for some $\l,\rho\in \RR$.
\smallskip
\item Quasi--Einstein metrics (see~\cite{mmancatmazrim} and~\cite{HePetWylie}), corresponding to $(\a,\b, \g, \zeta, \eta) = (1, \mu,0 , \l ,0)$, for some $\l,\mu\in \RR$.
\smallskip
\item Fischer--Marsden metrics (see~\cite{fishmars} and~\cite{koba2}), wherever the potential function is different form zero. These metrics satisfy the equation 
$$
f \, Ric - \nabla^2 f \, = \, \frac{f}{n-1} R \, g \, .
$$
Hence, where $f\neq 0$, they correspond to $(\a,\b, \g, \zeta, \eta) = (-1/f,0,1/(n-1),0,0)$.
\smallskip
\item Solutions to vacuum field equations in Lorentzian setting induced by actions of the following type
\begin{equation}
\label{action}
S(g,f) \,=\, \int_{M} \big( a(f) R + b(f) |\nabla f|^{2} \big) \,dV_{g} \,,
\end{equation}
where $a$ and $b$ are functions of the scalar field $f$. The associated Euler equations, if $a,b\neq 0$, are given by
\begin{eqnarray*}
Ric -\frac{1}{2}R\,g &=& \frac{a''-b}{a}\big(df \otimes df -\tfrac{1}{2}|\nabla f|^{2} g \big)- \frac{a''}{2a}|\nabla f|^{2} g + \frac{a'}{a}\big( \nabla^{2}f - \Box f\,g\big) \,,\\
\Box f &=& -\frac{1}{2} \frac{\big((a')^{2}-\tfrac{n-2}{n-1}ab \big)' }{ (a')^{2}-\tfrac{n-2}{n-1}ab }\,|\nabla f|^{2} \,,
\end{eqnarray*}
where $\Box f = g^{ij}\nabla_{i}\nabla_{j}f$. The simplified equation reads
$$
Ric -\frac{a'}{a}\nabla^{2}f \,\,=\,\, \frac{a''-b}{a}\,df \otimes df +\frac{a'b'-2a''b +(a')^{2}(b/a)}{2(n-2)b^{2}+2(n-1)a'b'-4(n-1)a''b}\,R g \,.
$$
Hence, where $a,b\neq 0$, these solutions correspond to a generalized Ricci potential with 
$$
(\a,\b, \g, \zeta, \eta) \,=\, \big( -\tfrac{a'}{a},\tfrac{a''-b}{a}, \tfrac{a'b'-2a''b +(a')^{2}(b/a)}{2(n-2)b^{2}+2(n-1)a'b'-4(n-1)a''b}, 0 ,0 \big)\,.
$$

\smallskip
\item[(5-bis)] Solutions to the vacuum field equations in Bergmann--Wagoner--Nordtvedt theory of gravitation (for an overview see~\cite{will})
\begin{equation*}
\label{action}
S(g,f) \,=\, \int_{M} \Big( f\, R - \frac{\om(f)}{f}|\nabla f|^{2} \Big) \,dV_{g} \,,
\end{equation*}
where $\om$ is a smooth function of the scalar field $f$. This is a particular case of Example (5) with $a(f)=f$ and $b(f)=-\om(f)/f$, we assume $a,b\neq 0$. The associated Euler equations are given by
\begin{eqnarray*}
\label{meq}
Ric-\frac{1}{2}R\,g &=& \frac{\om}{f^2}\big( df \otimes df - \tfrac{1}{2}|\nabla f|^2\,g \big) + \frac{1}{f} \big( \nabla^2 f - \Box f \,g \big) \,, \\ \label{seq}
\Box f &=& -\frac{\om'}{3+2\om} |\nabla f|^2 \,,
\end{eqnarray*}
where $\om'=d\om/d f$. A simple computation implies the following structure equation for the metric $g$
$$
Ric-\frac{1}{f}\nabla^{2}f \,\,=\,\, \frac{\om}{f^2} df\otimes df - \frac{\om' f}{(n-2)(3+2\om)\om -2(n-1) \om' f}R\,g \,. 
$$
These solutions correspond to a generalized Ricci potential with 
$$
(\a,\b, \g, \zeta, \eta) \,=\, \big( -\tfrac{1}{f},\tfrac{\om}{f^{2}}, - \tfrac{\om' f}{(n-2)(3+2\om)\om -2(n-1) \om' f}, 0 ,0 \big)\,.
$$
\end{enumerate}

For what follows, it is also convenient to introduce a notion of nondegeneracy for generalized Ricci potential. The motivation for the following definition comes from Theorem~\ref{teo-rectg} below.
\begin{dfnz}
\label{def-nondeg}
In the same setting as in Definition~\ref{def-genpot}, we say that a {\em generalized Ricci potential} $f$ is {\em nondegenerate} around $\Sigma_{c}$ if the following conditions are satisfied on $U$
\begin{eqnarray}
\label{nd1}&\a \neq 0\,,&\\
\label{nd2}&\a^{2}-\a'- \b \neq 0\,,&\\
\label{nd3}& \big(\tfrac{2\a\a'-\a''-\b'}{\a^{2}-\a'-\b}+\tfrac{2\b}{\a}\big)\big(\tfrac{1-2(n-1)\g}{2}\big) 
-\tfrac{(1-n\g)(\a'+\b)+\a^{2}\g}{\a} \neq 0 \,,&
\end{eqnarray}  
where we denoted by $(\cdot)'$ the derivative with respect to the $f$ variable.
\end{dfnz}
We notice that the smooth functions $\zeta$ and $\eta$ are not involved in the nondegeneracy conditions. We also observe that the metrics in the examples $(1), (3)$ and $(4)$, give rise to {\em degenerate} generalized Ricci potentials, whereas examples $(5)$ and $(5-bis)$ are generically {\em nondegenerate} generalized Ricci potentials. Moreover, it is not difficult to check that if we start with a generalized Ricci potential $f$ as in Definition~\ref{def-genpot} and we change the metric $g$ into $\tilde{g} = \phi^2 \, g$, for every positive smooth function $\phi: f(U) \rightarrow \RR$, we have that $f$ still remains a generalized Ricci potential for $\tilde{g}$ with suitably modified coefficients. On the other hand, we conjecture that the condition of being a degenerate Ricci potential is stable under this class of conformal changes (so far, we have evidences of this fact in the case of gradient Ricci solitons). We are now in the position to state the main theorem of this section.
\begin{teo}\label{teo-rectg} Let $(M^{n},g)$, $n\geq 3$, be a Riemannian manifold and let $f$ be a {\em nondegenerate generalized Ricci potential} for $(M^{n},g)$ around a regular connected component $\Sigma_{c}$ of the level set $\{f=c\}$. Then, there exists an open neighborhood $U$ of $\Sigma_{c}$ where $f$ is rectifiable.
\end{teo}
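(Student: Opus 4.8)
The plan is to prove rectifiability on a neighborhood $U$ of $\Sigma_c$ on which $\nabla f\neq 0$ (such a $U$ exists because $\Sigma_c$ is a \emph{regular} component) by showing that both $\nabla R$ and $\nabla|\nabla f|^{2}$ are pointwise proportional to $\nabla f$. This is exactly what rectifiability asks for, since it forces $|\nabla f|^{2}$ to be constant along the connected components of the level sets. To get the two proportionalities simultaneously, I would set $u=|\nabla f|^{2}$ and produce two independent linear relations of the shape
\[
P_{1}\nabla R + Q_{1}\nabla u \,=\, D\,\nabla f, \qquad P_{2}\nabla R + Q_{2}\nabla u \,=\, E\,\nabla f,
\]
with coefficients that are functions of $f$ alone; the nondegeneracy hypotheses will guarantee that the $2\times2$ coefficient matrix is invertible, which instantly yields $\nabla R,\nabla u\parallel\nabla f$.

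For the first relation I would start from the trace of \eqref{grp}. Since $P$ is parallel, $\trace P$ is constant, so contracting \eqref{grp} with $g$ gives $\alpha\,\Delta f=(n\gamma-1)R+\beta u+n\zeta+\eta\,\trace P$; by \eqref{nd1} this expresses $\Delta f$, hence $\nabla\Delta f$, through $R$, $u$ and functions of $f$. I would then take the divergence of \eqref{grp}: the contracted second Bianchi identity gives $\div\Ric=\tfrac12\nabla R$; the commutation $\nabla_{i}\nabla_{i}\nabla_{j}f=\nabla_{j}\Delta f+R_{jk}\nabla_{k}f$ together with $\nabla\alpha=\alpha'\nabla f$ handles $\div(\alpha\nabla^{2}f)$; the term $\div(\beta\,df\otimes df)$ contributes $\tfrac{\beta}{2}\nabla u$ plus multiples of $\nabla f$; and, crucially, the hypotheses on $P$ make it disappear, since $\nabla^{i}P_{ij}=0$ and $P(\nabla f,\cdot)=0$ give $\div(\eta P)=0$. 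The Ricci term $\alpha\,\Ric(\nabla f)$ that appears I would remove by contracting \eqref{grp} itself with $\nabla f$, which yields $\Ric(\nabla f)=\beta u\,\nabla f+\gamma R\,\nabla f+\zeta\,\nabla f-\tfrac{\alpha}{2}\nabla u$. Substituting this and then using the trace relation to eliminate $\nabla\Delta f$, the leftover $\nabla\Delta f$ contributions reorganize into $(n\gamma-1)\nabla R+\beta\nabla u$ and one arrives at
\[
\big(1-2(n-1)\gamma\big)\nabla R + \big(\alpha^{2}-\alpha'-\beta\big)\nabla u \,=\, D\,\nabla f,
\]
where $D=D(f,R,u)$ is affine in $R$ and $u$ with $f$-dependent coefficients.

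The second relation I would obtain by taking the exterior derivative of the $1$-form version $a_{1}\,dR+a_{2}\,du-D\,df=0$ of the previous identity, where $a_{1}=1-2(n-1)\gamma$ and $a_{2}=\alpha^{2}-\alpha'-\beta$ are functions of $f$. Using $d(dR)=d(du)=d(df)=0$ and $dD=D_{f}\,df+D_{R}\,dR+D_{u}\,du$, every $df\wedge df$ term drops out and one is left with
\[
\big(a_{1}'+D_{R}\big)\,df\wedge dR + \big(a_{2}'+D_{u}\big)\,df\wedge du \,=\, 0,
\]
i.e.\ $(a_{1}'+D_{R})\nabla R+(a_{2}'+D_{u})\nabla u=E\,\nabla f$ for some function $E$. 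Together with the first identity this is a linear system for $(\nabla R,\nabla u)$ whose right-hand side is a multiple of $\nabla f$, so everything reduces to showing that its determinant $a_{1}(a_{2}'+D_{u})-a_{2}(a_{1}'+D_{R})$ does not vanish.

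The main obstacle is precisely this determinant computation, and it is where all three nondegeneracy conditions enter. Computing the $R$- and $u$-coefficients of $D$ after eliminating $\Delta f$ should give the clean factorizations $D_{u}=\tfrac{2\beta}{\alpha}\big(\alpha^{2}-\alpha'-\beta\big)$ and $a_{1}'+D_{R}=\tfrac{2}{\alpha}\big[(1-n\gamma)(\alpha'+\beta)+\alpha^{2}\gamma\big]$; combined with $a_{2}'=2\alpha\alpha'-\alpha''-\beta'$ these collapse the determinant to
\[
a_{1}(a_{2}'+D_{u})-a_{2}(a_{1}'+D_{R}) \,=\, 2\big(\alpha^{2}-\alpha'-\beta\big)\,\Phi,
\]
where $\Phi$ is exactly the left-hand side of \eqref{nd3}. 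By \eqref{nd2} the prefactor $\alpha^{2}-\alpha'-\beta$ is nonzero and by \eqref{nd3} so is $\Phi$, while \eqref{nd1} has been used throughout to divide by $\alpha$; hence the determinant is nonzero. Therefore $\nabla R$ and $\nabla u$ are proportional to $\nabla f$ on $U$, so $|\nabla f|^{2}$ is constant on the regular components of the level sets and $f$ is rectifiable there. The genuinely delicate points are the correct signs in the Bianchi and Bochner identities and the bookkeeping of the $f$-dependent coefficients and their derivatives through the two successive eliminations that lead to the factorization above.
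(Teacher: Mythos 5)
Your proposal is correct, and its key step takes a genuinely different route from the paper's. Both arguments start from the same two ingredients, the traced equation and the divergence identity (the paper's \eqref{eq1} and \eqref{eq2}), and both hinge on the same nondegeneracy quantity: your determinant identity checks out exactly, since $D_u = \tfrac{2\beta}{\alpha}(\alpha^{2}-\alpha'-\beta)$, $a_1'+D_R=\tfrac{2}{\alpha}\big[(1-n\gamma)(\alpha'+\beta)+\alpha^{2}\gamma\big]$ and $a_2'=2\alpha\alpha'-\alpha''-\beta'$ give $a_1(a_2'+D_u)-a_2(a_1'+D_R)=2(\alpha^{2}-\alpha'-\beta)\,\Phi$, so \eqref{nd1}--\eqref{nd3} are precisely the invertibility of your $2\times 2$ system. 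The difference is in how the second relation is produced. The paper keeps $\Ric(\nabla f,\cdot)$ explicit in \eqref{eq2}, takes a further covariant derivative and antisymmetrizes; this forces it to establish the auxiliary tensorial identity \eqref{eq3} (a curl--of--Ricci formula containing the Riemann tensor term $R_{cabd}\nabla_{d}f$), whose curvature contributions then drop out under antisymmetrization, yielding \eqref{eq4}; the constancy of $|\nabla f|$ on level sets is deduced afterwards, separately, from \eqref{grp} and \eqref{eq2}. You instead eliminate $\Ric(\nabla f,\cdot)$ at the outset via the radial contraction of \eqref{grp}, so the divergence identity becomes a purely scalar $1$--form relation $a_1\,dR+a_2\,du=D\,df$ with $D$ a smooth function of $(f,R,u)$; the second relation then comes for free from $d^{2}=0$, with no commutation formulas for tensors and no Riemann tensor ever appearing, and you get $df\wedge dR=0$ and $df\wedge du=0$ simultaneously rather than sequentially. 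What the paper's longer route buys is the identity \eqref{eq3} itself, which is not a throwaway: it is reused later, for instance in the Cotton--tensor computation in the proof of Theorem~\ref{teo-lcf}. What your route buys is economy and transparency: the argument never leaves the algebra of the three scalars $f$, $R$, $|\nabla f|^{2}$, and the nondegeneracy hypotheses appear exactly as the nonvanishing of an explicit determinant.
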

\begin{proof}
We start our analysis by proving a series of basic identities for $f$. The notations adopted are the same as in Definition~\ref{def-genpot}. 

\begin{lemma}\label{lemg} Let $(M^{n},g)$, $n\geq 3$, be a Riemannian manifold and let $f$ be a {\em nondegenerate generalized Ricci potential} for $(M^{n},g)$ around a regular connected component $\Sigma_{c}$ of the level set $\{f=c\}$. Then, there exist an open neighborhood $U$ of $\Sigma_{c}$ where the following identities hold
\begin{eqnarray}
\label{eq1}&\Delta f = \frac{\b}{\a}|\nabla f|^{2}+\frac{n\g-1}{\a}R+\frac{n\z}{\a}+\frac{\e}{\a} \, trP\,,&\\
\label{eq2}&\frac{1-2(n-1)\g}{2} \nabla R = \frac{\a^{2}-\a'-\b}{\a} \Ric(\nabla f,\,\cdot\,)+\frac{(n-1)(\a\g'-\a'\g-\g\b)+\a'+\b}{\a} R\nabla f + \sigma\nabla f \,,&\\\nonumber
&\nabla_{c}R_{ab}-\nabla_{b}R_{ac} = \g\big(g_{ab}\nabla_{c}R -g_{ac}\nabla_{b}R \big) + \frac{\b+\a'}{\a}\big(R_{ab}\nabla_{c}f - R_{ac}\nabla_{b}f\big) -\a \,R_{cabd}\nabla_{d}f&\\
\label{eq3}& \quad\,\,\,+\e'\big(P_{ab}\nabla_{c}f-P_{ac}\nabla_{b}f\big) +\xi\big(g_{ab}\nabla_{c}f-g_{ac}\nabla_{b}f\big)\,,&\\
\label{eq4}&\nabla_{b}R \,\nabla_{c}f = \nabla_{c} R \, \nabla_{b} f \,,&
\end{eqnarray}
where $trP=g^{ab}P_{ab}$ is the constant trace of the tensor $P$, $\sigma:f(U)\rightarrow \RR$ and $\xi:M^{d}\rightarrow \RR$ are smooth functions and we denoted by $(\cdot)'$ the derivative with respect to the $f$ variable.
\end{lemma}

\begin{proof}
The proof is divided in four steps, corresponding to each of the four identities.

{\bf Equation~\eqref{eq1}.} We simply contract equation~\eqref{grp}.

{\bf Equation~\eqref{eq2}.} Taking the divergence of the structural equation~\eqref{grp}, one has
\begin{eqnarray*}
0&=& \tfrac{1}{2}\nabla R + \a\nabla \Delta f + \a \,\Ric(\nabla f,\,\cdot\,)+\a'\nabla^{2}f(\nabla f,\,\cdot\,)\\
&&-\,\b\Delta f \nabla f-\b\nabla^{2}f(\nabla f, \,\cdot\,) -\b'|\nabla f|^{2}\nabla f\\
&&-\,\g\nabla R-\g'R\nabla f -\z'\nabla f\,,
\end{eqnarray*}
where we used Schur lemma $2\,\hbox{div }\Ric=dR$, the commutation formula for the covariant derivatives and the facts that $\nabla P=0$ and $P(\nabla f,\,\cdot\,)=0$. Using equation~\eqref{eq1} and observing that $trP$ is a constant function, we have
\begin{eqnarray*}
0 &=& \tfrac{2(n-1)\g-1}{2}\nabla R +(\b+\a')\nabla^{2}f(\nabla f,\,\cdot\,)+\a\,\Ric(\nabla f,\,\cdot\,)\\
&&-\,\tfrac{\b(\a'+\b)}{\a}|\nabla f|^{2}\nabla f+\tfrac{(n-1)\a\g'-n\a'\g+\a'-(n\g-1)\b}{\a}R\nabla f + \sigma_{2} \nabla f\,,
\end{eqnarray*}
where $\sigma_{2}:f(U)\rightarrow \RR$ is some function of $f$. Notice that we have used the nondegeneracy condition~\eqref{nd1} $\a\neq0$. Using again equation~\eqref{grp} to substitute the Hessian term $\nabla^{2}f$, we get
\begin{eqnarray*}
0 &=& \tfrac{2(n-1)\g-1}{2}\nabla R +\tfrac{\a^{2}-\a'-\b}{\a}\,\Ric(\nabla f,\,\cdot\,)\\
&&+\tfrac{(n-1)(\a\g'-\a'\g-\g\b)+\a'+\b}{\a} R\nabla f + \sigma_{3} \nabla f\,,
\end{eqnarray*}
where $\sigma_{3}:f(U)\rightarrow \RR$ is some function of $f$.

{\bf Equation~\eqref{eq3}.} Taking the covariant derivative of equation~\eqref{grp}, we get
\begin{eqnarray*}
\nabla_{c}R_{ab}-\nabla_{b}R_{ac} & = & -\a\big( \nabla_{c}\nabla_{b}\nabla_{a}f-\nabla_{b}\nabla_{c}\nabla_{a}f\big)\\
&&+\,\g\big(g_{ab}\nabla_{c}R-g_{ac}\nabla_{b}R\big)\\
&&+\,(\b+\a')\big(\nabla_{c}\nabla_{a}f\nabla_{b}f-\nabla_{b}\nabla_{a}f\nabla_{c}f\big)\\
&&+\,(\z'+\g'R)\big(g_{ab}\nabla_{c}f-g_{ac}\nabla_{b}f\big)\\
&&+\,\e'\big(P_{ab}\nabla_{c}f-P_{ac}\nabla_{b}f\big)\\
&=& -\a\,R_{cbad}\nabla_{d}f+\g\big(g_{ab}\nabla_{c}R-g_{ac}\nabla_{b}R\big)\\
&&+\,\tfrac{\b+\a'}{\a}\big(R_{ab}\nabla_{c}f-R_{ac}\nabla_{b}f\big)\\
&&+\,\e'\big(P_{ab}\nabla_{c}f-P_{ac}\nabla_{b}f\big) +\xi_{1}\big(g_{ab}\nabla_{c}f-g_{ac}\nabla_{b}f\big)\,,
\end{eqnarray*}
for some smooth function $\xi_{1}:M^{d}\rightarrow\RR$. Notice that in the last equality we used again the commutation formula and equation~\eqref{grp}.

{\bf Equation~\eqref{eq4}.} Taking the covariant derivative of equation~\eqref{eq2}, one obtains
\begin{eqnarray*}
\tfrac{1-2(n-1)\g}{2}\nabla_{b}\nabla_{c}R &=& \Big[\tfrac{2\a\a'-\a''-\b'}{\a}-\tfrac{(\a^{2}-\a'-\b)(\a'-\b)}{\a^{2}}\Big]\,R_{cd}\nabla_{b}f\nabla_{d}f\\
&&+\,\tfrac{\a^{2}-\a'-\b}{\a} \,\nabla_{b}R_{cd}\nabla_{d}f+\tfrac{\a'-(n-1)\a'\g+\b-(n-1)\b\g}{\a}\,\nabla_{b}R \nabla_{c} f \\
&&+\,\xi_{2}\big(\nabla_{b}R\nabla_{c}f+\nabla_{b}f\nabla_{c}R \big)+\xi_{3}\nabla_{b}f\nabla_{c}f\\
&&+\,\xi_{4}\nabla_{b}\nabla_{c}f+\xi_{5}R_{bc}+\xi_{6}R_{bd}R_{cd}+\xi_{7}P_{bd}R_{cd}\,,
\end{eqnarray*}
for some smooth functions $\xi_{2},\dots,\xi_{7}:M^{d}\rightarrow\RR$. Notice that the last six terms of the right hand side are symmetric. Thus, we get
\begin{eqnarray*}
0 &=& \tfrac{1-2(n-1)\g}{2}\big(\nabla_{b}\nabla_{c}R-\nabla_{c}\nabla_{b}R\big)\\
&=& \Big[\tfrac{2\a\a'-\a''-\b'}{\a}-\tfrac{(\a^{2}-\a'-\b)(\a'-\b)}{\a^{2}}\Big] \big(R_{cd}\nabla_{b}f-R_{bd}\nabla_{c}f \big)\nabla_{d}f\\
&&-\,\tfrac{\a^{2}-\a'-\b}{\a}\big(\nabla_{c}R_{bd} - \nabla_{b}R_{cd} \big) \nabla_{d}f \\
&&-\,\tfrac{\a'-(n-1)\a'\g+\b-(n-1)\b\g}{\a}\big(\nabla_{c}R \nabla_{b} f-\nabla_{b}R\nabla_{c}f \big)\,.
\end{eqnarray*}
Substituting equation~\eqref{eq3} in the second term of the right hand side, we obtain  
\begin{eqnarray*}
0 &=& \Big[\tfrac{2\a\a'-\a''-\b'}{\a}+\tfrac{2(\a^{2}-\a'-\b)\b}{\a^{2}}\Big] \big(R_{cd}\nabla_{b}f-R_{bd}\nabla_{c}f \big)\nabla_{d}f \\
&&-\,\tfrac{\a'-n\a'\g+\b-n\b\g +\a^{2}\g}{\a}\big(\nabla_{c}R \nabla_{b} f-\nabla_{b}R\nabla_{c}f \big)\,.
\end{eqnarray*}
Now, to conclude, it is sufficient to substitute the first term of the right hand side using equation~\eqref{eq2}. Notice that in doing that we make use of the nondegeneracy condition~\eqref{nd2} $\a^{2}-\a'-\b\neq 0$. 
\begin{eqnarray*}
0 &=& \Big[\big(\tfrac{2\a\a'-\a''-\b'}{\a^{2}-\a'-\b}+\tfrac{2\b}{\a}\big)\big(\tfrac{1-2(n-1)\g}{2}\big) 
-\tfrac{(1-n\g)(\a'+\b)+\a^{2}\g}{\a}\Big]\big(\nabla_{c}R \nabla_{b} f-\nabla_{b}R \nabla_{c}f \big)\,.
\end{eqnarray*}
Equation~\eqref{eq4} follows now from the third nondegeneracy condition~\eqref{nd3}
$$
\big(\tfrac{2\a\a'-\a''-\b'}{\a^{2}-\a'-\b}+\tfrac{2\b}{\a}\big)\big(\tfrac{1-2(n-1)\g}{2}\big) 
-\tfrac{(1-n\g)(\a'+\b)+\a^{2}\g}{\a} \neq 0\,.
$$
This concludes the proof of the lemma.
\end{proof}

Let now $U$ be the neighborhood of a regular connected component $\Sigma_{c}$ of the level set $\{f=c \}$, where equations~\eqref{eq1}--\eqref{eq4} are in force and, by continuity, let $\delta_{0}$ be a positive real number such that the regular connected components $\Sigma_{c+\delta}$ of the level sets $\{ f=c+\delta\}$ are subsets of $U$, for every $0\leq|\delta|<\delta_{0}$. We are going to prove that $|\nabla f|$ is constant along every $\Sigma_{c+\delta}$, $0\leq|\delta|<\delta_{0}$. First of all, we notice that $R$ is constant along every $\Sigma_{c+\delta}$. Indeed,  from equation~\eqref{eq4}, for all $V\in T\Sigma_{c+\delta}$, one has
\begin{equation}\label{rcost}
\langle \nabla R,V\rangle \,\,|\nabla f|^{2} \, = \, \langle \nabla R,\nabla f\rangle \, \langle\nabla f,V\rangle\,=\,0\,.
\end{equation}
Moreover, from the structural equation~\eqref{grp}
\begin{eqnarray}\label{gfcost}
\langle \nabla|\nabla f|^{2},V\rangle &=& 2 \,\nabla^{2} f (\nabla f, V)\\\nonumber
&=& (\tfrac{2\b}{\a}|\nabla f|^{2}+\tfrac{2\g}{\a} R +\tfrac{2\z}{\a})\langle \nabla f,V\rangle+\tfrac{2\e}{\a} T(\nabla f,V)-\tfrac{2}{\a}\Ric(\nabla f,V) \\\nonumber
&=& -\tfrac{1-2(n-1)\g}{\a^{2}-\a'-\b}\langle \nabla R,V \rangle \,\,=\,\,0\,,
\end{eqnarray}
where in the last equality we have used equation~\eqref{eq2} together with the fact that $P(\nabla f,\,\cdot\,)=~0$. We point out that we made use of the nondegeneracy conditions~\eqref{nd1}, \eqref{nd2}. This concludes the proof of Theorem~\ref{teo-rectg}.
\end{proof}

As a direct consequence of Theorem~\ref{teo-rectg}, we prove the following proposition, which describes some remarkable geometric properties of the regular level sets of a nondegenerate generalized Ricci potential.
\begin{prop}\label{prop-level} Let $(M^{n},g)$, $n\geq 3$, be a Riemannian manifold and let $f$ be a {\em nondegenerate generalized Ricci potential} for $(M^{n},g)$ around a regular connected component $\Sigma_{c}$ of the level set $\{f=c\}$. Then, the following facts hold:
\begin{itemize}
\item[(i)] The scalar curvature $R$ and $|\nabla f|$ are constants along $\Sigma_{c}$.
\item[(ii)] The mean curvature $H$ of $\Sigma_{c}$ is constant.
\item[(iii)] The scalar curvature $R^{\Sigma}$ of $(\Sigma_{c},g)$, with the induced metric by $g$ on $\Sigma_{c}$, is constant.
\end{itemize}
In particular, in a neighborhood of $\Sigma_{c}$, the generalized Ricci potential $f$ and all the geometric quantities $R,|\nabla f|, H$ and $R^{\Sigma}$ only depend on the signed distance $r$ to $\Sigma_{c}$.
\end{prop}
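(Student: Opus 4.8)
The plan is to read everything off the parallel family of level sets, using the rectifiability from Theorem~\ref{teo-rectg} together with the identities of Lemma~\ref{lemg}. Since $f$ is rectifiable on $U$, it depends only on the signed distance $r$ to $\Sigma_c$, so the regular components $\{f=c+\delta\}$ coincide with the distance hypersurfaces $\{r=\delta\}$ and carry the unit normal $\nu=\nabla f/|\nabla f|=\nabla r$. It therefore suffices to prove that each of $R$, $|\nabla f|$, $H$ and $R^\Sigma$ is constant along every such level set: the final clause is then automatic, since a quantity that is constant on each $\{r=\delta\}$ depends on $r$ alone. Item (i) needs no new argument, as the constancy of $R$ and $|\nabla f|$ along the regular components is precisely what is proved in~\eqref{rcost} and~\eqref{gfcost} inside the proof of Theorem~\ref{teo-rectg}.

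For (ii) I would compute $H=\operatorname{div}\nu=\Delta r$. Writing $f=f(r)$ with $f'\neq 0$ on the regular part, one has $\Delta f=f''(r)+f'(r)\,H$. The left-hand side is constant along $\Sigma_{c+\delta}$: indeed~\eqref{eq1} expresses $\Delta f$ as a combination---with coefficients that are functions of $f$, hence constant on the level set---of the quantities $|\nabla f|^2$, $R$ and $\operatorname{tr}P$, all constant along $\Sigma_{c+\delta}$ by (i) and by the constancy of $\operatorname{tr}P$. Since $f'$ and $f''$ are functions of $r$ and $f'\neq 0$, it follows that $H$ is constant along $\Sigma_{c+\delta}$.

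For (iii) the natural tool is the traced Gauss equation $R^\Sigma=R-2\,\Ric(\nu,\nu)+H^2-|\mathrm{II}|^2$, where $\mathrm{II}$ is the second fundamental form of $\Sigma_{c+\delta}$. Two of the four terms are already controlled by (i) and (ii). For the third, I would first note that $\nu$ is a Ricci eigenvector: since $R$ is constant on the level sets, $\nabla R$ is proportional to $\nabla f$, and inserting this into~\eqref{eq2} (using the nondegeneracy~\eqref{nd2}) shows that $\Ric(\nabla f,\cdot)$ is a multiple of $\nabla f$, i.e. $\Ric(\nu,X)=0$ for every $X$ tangent to $\Sigma_{c+\delta}$; contracting the same relation with $\nabla f$ then expresses $\Ric(\nabla f,\nabla f)$ through $R$, $\langle\nabla R,\nabla f\rangle$ and $|\nabla f|^2$, all constant on the level set, so $\Ric(\nu,\nu)$ is constant as well. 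Restricting the structural equation~\eqref{grp} to tangential directions---where $df$ and $P(\nu,\cdot)$ drop out---gives $\mathrm{II}=\tfrac{1}{\a|\nabla f|}\big(-\Ric^{T}+(\g R+\z)\,g^{T}+\e\,P^{T}\big)$, so that everything reduces to the constancy along $\Sigma_{c+\delta}$ of $|\mathrm{II}|^2$, equivalently of $|\Ric^{T}|^2$ (together with the cross term $\langle\Ric,P\rangle$).

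This last point is the main obstacle. I expect it to follow from the Codazzi-type identity~\eqref{eq3}, which governs the tangential derivatives of the Ricci tensor: combined with the facts already in hand---that $\nu$ is a Ricci eigenvector and that $R$ and $\Ric(\nu,\nu)$ are constant on the level sets---this should force the tangential derivative of $|\Ric^{T}|^2$ to vanish. Once $|\mathrm{II}|^2$ is known to be constant along $\Sigma_{c+\delta}$, the Gauss equation delivers the constancy of $R^\Sigma$, which completes (iii) and, with it, the final ``in particular'' statement.
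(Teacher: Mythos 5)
Your items (i) and (ii) are sound: (i) is exactly what \eqref{rcost} and \eqref{gfcost} already establish, and your route to (ii) via $\Delta f = f'' + f' H$ combined with \eqref{eq1} is a legitimate variant of the paper's argument (the paper instead traces $h_{ij} = (\nabla^{2}f)_{ij}/|\nabla f| = -\big(R_{ij}-(\g R+\z)g_{ij}-\e P_{ij}\big)/(\a|\nabla f|)$ and uses \eqref{eq2} to see that $R_{rr}$ is constant along the level set; both work). The genuine gap is in (iii), and you have named it yourself: the constancy of $|\mathrm{II}|^{2}$ along the level sets is never proved, only ``expected'' to follow from \eqref{eq3}. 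That expectation does not obviously close. For tangential indices $b,c$ one has $\nabla_{b}f=\nabla_{c}f=0$, and since $R$ is constant on the level sets the tangential $\nabla R$ terms drop as well, so \eqref{eq3} degenerates to $\nabla_{c}R_{ab}-\nabla_{b}R_{ac}=-\a\,R_{cabd}\nabla_{d}f$, an ambient Riemann component with one normal index. By the Codazzi equation that component is itself a combination of tangential covariant derivatives of the second fundamental form, so trying to control $\nabla^{\Sigma}|\Ric^{T}|^{2}$ this way sends you straight back to derivatives of $h$: the argument is circular rather than conclusive. Note also that \eqref{eq3} only yields the antisymmetrized derivative $\nabla_{c}R_{ab}-\nabla_{b}R_{ac}$, whereas $\nabla_{k}(R_{ij}R_{ij})$ involves the full symmetric part.

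The paper closes this step with a tool you did not invoke: the contracted Riccati equation along the normal direction, $|h|^{2} = -H' - R_{rr}$ (see~\cite[Chapter~1]{chowluni}). Since Theorem~\ref{teo-rectg} and parts (i), (ii) hold on a whole neighborhood $U$ of $\Sigma_{c}$ (not just on $\Sigma_{c}$ itself), the mean curvature $H$ is a function of $r$ alone, hence $H'=dH/dr$ is constant on each level set; and $R_{rr}=\Ric(\nu,\nu)$ is constant on each level set by the very argument you gave using \eqref{eq2}. Therefore $|h|^{2}$ is constant on each level set, and the Gauss equation $R^{\Sigma}=R-2R_{rr}-|h|^{2}+H^{2}$ finishes (iii). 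Your plan is salvageable by replacing the appeal to \eqref{eq3} with this Riccati identity; as written, however, the decisive step of (iii) is missing.
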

\begin{proof}
As we have already observed in equations~\eqref{rcost} and \eqref{gfcost}, property (i) follows immediately. 
From this we deduce that, in a neighborhood $U$ of $\Sigma_{c}$ where equations~\eqref{eq1}--\eqref{eq4} are in force, the generalized Ricci potential $f$ only depends on the signed distance $r$ to the hypersurface $\Sigma_{c}$. In fact, since $\nabla r$ coincides with the unit normal vector $\nu=\nabla f/|\nabla f|$, one has $df=|\nabla f|dr=f' dr$, where $f'=df/dr$.  Moreover, if $\theta=(\theta^{1}\,\ldots,\theta^{n-1})$ are coordinates {\em adapted} to the hypersurface $\Sigma_{c}$, we get
$$
\nabla^{2}f \,=\, \nabla df \,=\, f'' dr\otimes dr + f' \nabla^2 
r=\, f'' dr\otimes dr + \frac{f'}{2} \, \partial_r g_{ij} \,d\theta^i\otimes d\theta^j\,,
$$
as
$$
\Gamma_{rr}^r=\Gamma_{rr}^k=\Gamma_{ir}^r=0\,,\qquad
\Gamma_{ij}^r=- \frac{1}{2} \,\partial_r g_{ij}\,,\qquad
\Gamma_{ir}^k= \frac{1}{2} \, g^{ks}\partial_r g_{is}\,,
$$
for $i,j=1,\dots,n-1$.

To prove (ii), we recall that the second fundamental form $h$ 
verifies
$$
h_{ij} \,=\, \frac{(\nabla^{2} f)_{ij}}{|\nabla f|} \,=\,- \frac{R_{ij} -
  (\g R+\z) g_{ij}-\e P_{ij}}{\a|\nabla f|}\,,
$$
for $i,j=1,\dots,n-1$.
Thus, the mean curvature $H$ of $\Sigma_{c}$
satisfies 
\begin{eqnarray*}
H = g^{ij}\,h_{ij} &=& -\frac{R -R_{rr}-(n-1)(\g R+\z)-\e \tau + \e P_{rr}}{\a|\nabla f|} \\
&=& -\frac{(1-(n-1)\g)R -R_{rr}-(n-1)\z-\e tr(P)}{\a|\nabla f|}\,.
\end{eqnarray*}
Now, combining equation~\eqref{eq2} with property (i), it is easy to deduce that $R_{rr}$ is constant along $\Sigma_{c}$ and property (ii) follows.

In order to prove (iii), we consider the contracted 
{\em Riccati equation} (see~\cite[Chapter~1]{chowluni})
$$
|h|^{2} = - H' - R_{rr} \,
$$
and we deduce at once that the norm of the second fundamental form
$|h|^{2}$ is also constant on $\Sigma_{c}$. 
Now, from the {\em Gauss
equation} (see again~\cite[Chapter~1]{chowluni})
$$
R^{\Sigma} = R - 2R_{rr} -|h|^{2} +H^{2} \,,
$$
we conclude that the scalar curvature $R^{\Sigma}$ of the metric
induced by $g$ on $\Sigma_{c}$ is constant and property (iii) follows. It is now immediate to observe that in $U$ all the quantities $R,|\nabla f|,H$ and $R^{\Sigma}$ only depend on $r$.
\end{proof}
\begin{rem}
\label{GR}
We observe that all the computations in this section still remain true in Lorentzian or even semi-Riemannian setting. In particular, whenever the nondegeneracy conditions are satisfied by the coefficients involved in examples $(5)$ and $(5-bis)$ above, we have that the corresponding space--time solutions to the relativistic Einstein field equations are necessarily foliated by hypersurfaces with constant mean curvature and constant induced scalar curvature, about a regular connected component of a level set of the potential $f$ .
\end{rem}

\

\section{Gradient $\rho$--Einstein solitons}
\label{sect-esol}

We pass now to the analysis of gradient $\rho$--Einstein solitons. We recall that a gradient $\rho$--Einstein soliton is a Riemannian manifold $(M^{n},g)$, $n\geq 3$, 
endowed with a smooth function $f:M^{n}\rightarrow \RR$, such that the metric $g$ satisfies the equation
\begin{equation}\label{esol}
\Ric + \nabla^{2}f = \rho R\,g + \lambda g\,,
\end{equation}
for some constants $\rho,\lambda\in\RR$, $\rho\neq 0$. To see that gradient $\rho$--Einstein solitons generate self--similar solutions to the $\rho$--Einstein flow $\partial_{t} g = -2(Ric-\rho R\,g)$ it is sufficient to set $g(t) = (1-2\lambda t) \,\varphi_{t}^{*} (g)$, if $1-2\lambda t>0$, where $\varphi_{t}$ is the $1$-parameter family of diffeomorphisms generated by $Y(t) = \nabla f / (1-2\lambda t)$ with $\varphi_{0} = id_{M^{n}}$.

We start by focusing our attention on compact gradient $\rho$--Einstein solitons and we prove the following triviality result.
\begin{teo}\label{teo-cptesol} Let $(M^{n},g)$, $n\geq 3$, be compact gradient $\rho$--Einstein soliton. Then, the following cases occur.
\begin{itemize}
\item[(i)] If $\rho\leq 1/2(n-1)$, then either $\lambda>0$ and $R>0$ or the soliton is trivial.
\item[(i-bis)] If $\rho=1/2(n-1)$, then the soliton is trivial.
\item[(ii)] If $1/2(n-1)<\rho< 1/n$, then either $\lambda<0$ and $R<0$ or the soliton is trivial.
\item[(iii)] If $1/n \leq\rho$, then the soliton is trivial.
\end{itemize}
\end{teo}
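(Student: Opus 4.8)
The plan is to distill~\eqref{esol} into a few differential identities and then run a maximum principle whose outcome is governed by the sign of $a := \tfrac12\big(1-2(n-1)\rho\big)$. Tracing~\eqref{esol} gives $\Delta f = (n\rho-1)R+n\lambda$; integrating over the closed manifold $M$ (volume $V$) yields $\overline R := \tfrac1V\int_M R\,dV = \lambda/\mu$, where $\mu := \tfrac1n-\rho$, so that for $\rho<1/n$ the mean scalar curvature has the sign of $\lambda$. Taking the divergence of~\eqref{esol}, and using $\div\Ric=\tfrac12\nabla R$ together with the commutation of covariant derivatives, produces $\Ric(\nabla f,\cdot)=a\,\nabla R$; differentiating once more and reinserting $\nabla^2 f$ from~\eqref{esol} gives the scalar identity
\[
a\,\Delta R-\tfrac12\langle\nabla f,\nabla R\rangle=\rho R^2+\lambda R-|\Ric|^2 .
\]
Integrating this over $M$, substituting $\int_M\langle\nabla f,\nabla R\rangle=-\int_M R\,\Delta f$ and splitting $|\Ric|^2=|\mathring\Ric|^2+R^2/n$ (with $\mathring\Ric$ the trace-free Ricci tensor), I obtain the master identity
\[
\int_M|\mathring\Ric|^2\,dV=\frac{(n-2)(1-n\rho)}{2n}\int_M\big(R-\overline R\big)^2\,dV .
\]

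Case~(iii) is then immediate: for $\rho\ge1/n$ the right-hand coefficient is nonpositive and the left-hand side nonnegative, so both vanish, $M$ is Einstein, hence $R$ is constant by Schur's lemma ($n\ge3$), and then $\Delta f$ is a constant with zero integral, i.e. $\Delta f\equiv0$ and $f$ is constant: the soliton is trivial.

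For $\rho<1/n$ (so $\mu>0$) the master identity is inconclusive and the maximum principle must be used on the scalar identity. The key algebraic simplification is $\rho R^2+\lambda R-R^2/n=\mu R(\overline R-R)$, obtained from $\lambda=\mu\overline R$. At an interior extremum $\nabla R=0$, so the pinching $|\Ric|^2\ge R^2/n$ turns the scalar identity into a sign condition on $\mu R_{\mathrm{ext}}(\overline R-R_{\mathrm{ext}})$; since $R_{\max}\ge\overline R\ge R_{\min}$, this pins down the sign of $R$ at the relevant extremum. When $a>0$ (case~(i), $\rho<1/2(n-1)$) the minimum is the useful extremum and yields $R_{\min}\ge0$, whence $R\ge0$ and $\lambda=\mu\overline R\ge0$; a strong maximum principle applied to the supersolution inequality $a\Delta R-\tfrac12\langle\nabla f,\nabla R\rangle-\lambda R\le-\mu R^2\le0$ then upgrades this to $R>0$ when $\lambda>0$, the alternative being $R\equiv0$, i.e. triviality. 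When $a<0$ (case~(ii)) the maximum is the useful extremum and the mirror argument gives either $R<0$ with $\lambda<0$ or triviality.

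The main obstacle is the borderline Schouten value $\rho=1/2(n-1)$, i.e.\ $a=0$ (case~(i-bis)): here the scalar identity loses its $\Delta R$ term, degenerating to $\tfrac12\langle\nabla f,\nabla R\rangle=|\Ric|^2-\rho R^2-\lambda R$, so the ordinary maximum principle provides no second-order information. I would handle it by evaluating this first-order relation at \emph{both} a maximum and a minimum of $R$: at each point $\nabla R=0$ forces $|\Ric|^2=\rho R^2+\lambda R$, and the pinching yields $\mu R_{\max}(\overline R-R_{\max})\ge0$ and $\mu R_{\min}(\overline R-R_{\min})\ge0$ at once; because $R_{\max}\ge\overline R\ge R_{\min}$ these force $R_{\max}\le0$ and $R_{\min}\ge0$, hence $R\equiv0$ and $f$ constant. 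Throughout, the recurring reduction ``$R$ constant $\Rightarrow\Delta f$ constant of zero mean $\Rightarrow f$ constant'' is what converts each degenerate or equality case into triviality.
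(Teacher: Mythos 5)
Your proof is correct, and in cases (i), (ii) and (iii) it is essentially the paper's own argument: your traced, divergence and integrated identities are exactly \eqref{equ1}, \eqref{equ2}, \eqref{equi} and \eqref{equ3}; your ``master identity'' $\int_M|\mathring{\Ric}|^2\,dV=\tfrac{(n-2)(1-n\rho)}{2n}\int_M(R-\overline R)^2\,dV$ is precisely the paper's integration of \eqref{equ3} repackaged through the trace--free Ricci tensor and the variance of $R$ instead of an inequality chain ending with Cauchy--Schwarz (it also absorbs the paper's separate treatment of $\rho=1/n$ via Schur's lemma); and your extremum arguments at the minimum (case (i)) and maximum (case (ii)) coincide with the paper's. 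You diverge genuinely in two places. First, in (i)/(ii) you insert an explicit strong maximum principle step, using the supersolution inequality $a\Delta R-\tfrac12\langle\nabla f,\nabla R\rangle-\lambda R\le-\mu R^2\le 0$ with nonpositive zeroth--order coefficient once $\lambda\ge 0$ is known, to obtain the dichotomy $R>0$ or $R\equiv 0$; the paper asserts this upgrade from $R_{|q}\ge 0$ to $R>0$ quite tersely, so your version is if anything more careful. Second, and more substantially, your case (i-bis) takes a different route: the paper first invokes case (i) to reduce to $R>0$, then uses that \eqref{equ2} with $\rho=1/2(n-1)$ forces $\Ric(\nabla f,\cdot)=0$, hence the improved pinching $|\Ric|^2\ge R^2/(n-1)$, and reaches a contradiction at the maximum of $R$; you instead use only the weak pinching $|\Ric|^2\ge R^2/n$ but evaluate the first--order identity at \emph{both} extrema, getting $R_{\max}\le 0\le R_{\min}$ when $R$ is nonconstant. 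Your route avoids the improved pinching and any dependence on case (i), which is a real simplification; the paper's route exploits the special Schouten structure that you never need. The only slip is your phrasing ``these force $R_{\max}\le 0$ and $R_{\min}\ge 0$, hence $R\equiv 0$'': the sign conclusions at the extrema require the strict inequalities $R_{\max}>\overline R>R_{\min}$, i.e.\ that $R$ be nonconstant, and in the constant branch $R$ need not vanish (the round sphere is a trivial compact Schouten soliton with $R>0$), so the correct conclusion is triviality, not $R\equiv 0$. Since your stated ``recurring reduction'' ($R$ constant $\Rightarrow\Delta f$ constant of zero mean $\Rightarrow f$ constant) disposes of exactly that branch, this is a wording imprecision rather than a gap.
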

\begin{proof} It follows from the general computation in Lemma~\ref{lemg}, that if equation~\eqref{esol} is in force, then we have
\begin{eqnarray}
&\Delta f = (n\rho-1)R+n\lambda\,,&\label{equ1}\\
&\big(1-2(n-1)\rho\big)\nabla R = 2 \Ric (\nabla f, \,\cdot\,)\,.&\label{equ2}
\end{eqnarray}
Since $M^{n}$ is compact, integrating equation~\eqref{equ1}, we obtain the identity
\begin{equation}\label{equi}
\lambda \,=\, \tfrac{1-n\rho}{n} \fint_{M} R\,dV_{g}\,,
\end{equation}
where $\fint_{M}R\,dV_{g}=\mathrm{Vol}_{g}(M)^{-1}\int_{M}R\,dV_{g}$.
Taking the divergence of equation~\eqref{equ2}, we obtain
\begin{equation}\label{equ3}
\big(1-2(n-1)\rho\big)\Delta R = \langle \nabla R,\nabla f\rangle + 2( \rho R^{2}-|\Ric|^{2}+\lambda R )\,. 
\end{equation}

{\bf Case (i): $\rho \leq 1/2(n-1)$.} Let $q$ be a global minimum point of the scalar curvature $R$. Then, from equation~\eqref{equ3}, one has
\begin{eqnarray*}
0 &\leq& \big(1-2(n-1)\rho\big)\Delta R _{|q} \,=\, 2( \rho R^{2}-|\Ric|^{2}+\lambda R )_{|q}\\
&\leq& 2 R_{|q}\big(\lambda -\tfrac{1-n\rho}{n} R\big)_{|q} \,,
\end{eqnarray*}
where in the last inequality we have used $|\Ric|^{2}\geq (1/n) R^{2}$. Since $R(p)\geq R(q)$ for all $p\in M^{n}$, then from~\eqref{equi} we deduce that
$$
\lambda \geq \tfrac{1-n\rho}{n} R_{|q}\,,
$$
with equality if and only if $R\equiv R_{|q}$. In this latter case equation~\eqref{equ1} implies that $\Delta f=0$, thus $f$ is constant and the soliton is trivial. On the other hand, the strict inequality implies $R_{|q}\geq 0$ which forces $\lambda>0$ and $R>0$.

{\bf Case (i-bis): $\rho = 1/2(n-1)$.} Assume that the soliton is not trivial. Then, by case (i), we can assume $R>0$. First of all, we notice that equation~\eqref{equ2} implies that $\nabla f/|\nabla f|$ is an eigenvector of the Ricci tensor with zero eigenvalue, i.e. $\Ric(\nabla f/|\nabla f|, \,\cdot\,)=0$ on $M^{n}$. In particular the following inequality holds $|\Ric|^{2}\geq R^{2}/(n-1)$. On the other hand, from equation~\eqref{equ3}, one has
$$
\langle \nabla R,\nabla f\rangle = 2|\Ric|^{2}-\tfrac{1}{n-1}R^{2}-2\lambda R \geq \tfrac{1}{n-1}R \big( R-2(n-1)\lambda \big)\,.
$$
Let $q$ be a global maximum point of the scalar curvature $R$. Then, since $R(q)>0$, we obtain 
$$
R_{|q} \leq 2(n-1) \lambda\,.
$$
Since $R(p)\leq R(q)$ for all $p\in M^{n}$, then from~\eqref{equi} we deduce that $\lambda \leq \tfrac{n-2}{2n(n-1)} R_{|q}\,,$
which contradicts the the positivity of the scalar curvature.

{\bf Case (ii): $1/2(n-1)<\rho < 1/n$.} Let $q$ be a global maximum point of the scalar curvature $R$. Then, from equation~\eqref{equ3}, one has
\begin{eqnarray*}
0 &\leq& \big(1-2(n-1)\rho\big)\Delta R _{|q} \,=\, 2( \rho R^{2}-|\Ric|^{2}+\lambda R )_{|q}\\
&\leq& 2 R_{|q}\big(\lambda -\tfrac{1-n\rho}{n} R\big)_{|q} \,,
\end{eqnarray*}
where in the last inequality we have used $|\Ric|^{2}\geq (1/n) R^{2}$. Since $R(p)\leq R(q)$ for all $p\in M^{n}$, then from~\eqref{equi} we deduce that
$$
\lambda \leq \tfrac{1-n\rho}{n} R_{|q}\,,
$$
with equality if and only if $R\equiv R_{|q}$. In this latter case equation~\eqref{equ1} implies that $\Delta f=0$, thus $f$ is constant and the soliton is trivial. On the other hand, the strict inequality implies $R_{|q}\leq 0$ which forces $\lambda<0$ and $R<0$.

{\bf Case (iii): $1/n \leq \rho $.} First of all, we notice that if $\rho=1/n$, then from equation~\eqref{equ1}, one has $\Delta f=n\lambda$ on $M^{n}$. This forces $\lambda=0$ and $f$ to be constant. On the other hand, if $1/n < \rho$, we integrate equation~\eqref{equ3} obtaining
\begin{eqnarray*}
0 &=& \int_{M}\langle \nabla R,\nabla f\rangle \,dV_{g} + 2\int_{M}( \rho R^{2}-|\Ric|^{2}+\lambda R )\,dV_{g} \\
&=& -\int_{M}R\Delta f \,dV_{g} + 2\int_{M}( \rho R^{2}-|\Ric|^{2}+\lambda R )\,dV_{g} \\
&=& \int_{M} \big[ \big(1-(n-2)\rho\big)R^{2}-2|\Ric|^{2}-(n-2)\lambda R\big] \,dV_{g} \\
&\leq& -\tfrac{(n-2)(n\rho-1)}{n} \int_{M} R^{2}\,dV_{g}- (n-2)\lambda \int_{M}R \,dV_{g}\,,
\end{eqnarray*} 
where we have used the inequality $|\Ric|^{2}\geq (1/n) R^{2}$. Substituting identity~\eqref{equi}, we get
\begin{eqnarray*}
0 \leq \Big(\fint_{M}R \,dV_{g}\Big)^{2}  - \fint_{M} R^{2}\,dV_{g}\leq 0\,,
\end{eqnarray*} 
by the Cauchy--Schwartz inequality. Hence, $R$ must be constant and the soliton must be trivial.
\end{proof}
We observe that the same statement as in case (i) was already known for compact gradient Ricci solitons (formally corresponding to $\rho=0$, see~\cite{hamilton9} and~\cite{ivey1}). An immediate consequence of Theorem~\ref{teo-cptesol} is the following corollary, concerning the most significant classes of $\rho$--Einstein solitons.
\begin{cor}
\label{cor-cpt}
Every compact gradient Einstein, Schouten or traceless Ricci soliton is trivial.
\end{cor}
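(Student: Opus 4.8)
The plan is simply to specialize Theorem~\ref{teo-cptesol} to the three distinguished values of the parameter $\rho$. Recall that gradient Einstein, Schouten and traceless Ricci solitons correspond respectively to $\rho = 1/2$, $\rho = 1/2(n-1)$ and $\rho = 1/n$; hence the entire corollary reduces to checking into which case of the theorem each of these values falls and confirming that it is one of the cases forcing triviality. No new analytic input is needed, since the maximum-principle and integration arguments have already been carried out in the proof of the theorem.

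I would begin with the Schouten soliton, which is the immediate one: its value $\rho = 1/2(n-1)$ is exactly the one isolated in case (i-bis), where triviality is asserted directly, so no further argument is required.

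For the traceless Ricci soliton, $\rho = 1/n$ is the left endpoint of the range $\rho \geq 1/n$ treated in case (iii); there the degeneration of the contracted equation to $\Delta f = n\lambda$ forces $\lambda = 0$ and $f$ constant on the compact manifold. For the Einstein soliton I would only need the elementary inequality $1/2 \geq 1/n$, valid for every $n \geq 3$, so that $\rho = 1/2$ again lies in the range $\rho \geq 1/n$ of case (iii); here triviality comes instead from the integrated identity for the scalar curvature together with the Cauchy--Schwarz inequality.

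There is essentially no obstacle beyond correctly locating the three values relative to the thresholds $1/2(n-1)$ and $1/n$. The only point worth confirming is the ordering $1/2(n-1) < 1/n < 1/2$, valid for $n \geq 3$, which guarantees that none of the three distinguished solitons ever lands in the regimes of case (i) or case (ii), where nontrivial examples with $R>0$ or $R<0$ could in principle survive.
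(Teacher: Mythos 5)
Your proposal is correct and matches the paper's own argument: the paper derives this corollary as an immediate consequence of Theorem~\ref{teo-cptesol}, exactly by placing $\rho=1/2(n-1)$ in case (i-bis) and $\rho=1/n$, $\rho=1/2$ in case (iii). Your verification of the ordering $1/2(n-1)<1/n<1/2$ for $n\geq 3$ is the only checking required, and you did it correctly.
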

To conclude the analysis in the compact case, we notice that compact gradient Schouten solitons appeared in~\cite[equation 2.12]{leltop} as a first characterization of the equality case in an optimal $L^{2}$--curvature estimate on manifolds with nonnegative Ricci curvature.

We turn now our attention to the case of general (possibly noncompact) gradient $\rho$--Einstein solitons. As an immediate consequence of Theorem~\ref{teo-rectg}, we have the following:

\begin{teo}\label{teo-esolrect} 
Every gradient $\rho$--Einstein soliton is rectifiable.
\end{teo}
\begin{proof} It is sufficient to check that the nondegeneracy conditions~\eqref{nd1}--\eqref{nd3} with $\alpha=1$, $\beta=0$ and $\gamma\equiv\rho\neq 0$ are satisfied everywhere. Hence, we can apply Theorem~\ref{teo-rectg}.
\end{proof}

In the previous section we have seen that if a Riemannian manifold $(M^{n},g)$ admits a nondegenerate generalized Ricci potential $f$, then, around every regular regular connected component of a level sets of $f$, the manifold is foliated by constant mean curvature hypersurfaces. Obviously, the same is true for gradient $\rho$--Einstein solitons. Moreover, in dimension $n=3$ this fact has immediate stronger consequences, which we summarize in the following theorem.

%
%
%
%

\begin{teo}\label{teo-rot} Let $(M^{3},g)$ be a three--dimensional gradient $\rho$--Einstein soliton with $\rho<0$ and $\lambda\leq 0$ or $\rho\geq 1/2$ and $\lambda\geq 0$. If $(M^{3},g)$ has positive sectional curvature, then it is rotationally symmetric. 
\end{teo}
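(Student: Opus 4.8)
The plan is to combine the rectifiability of the soliton, established in Theorem~\ref{teo-esolrect}, with the level--set geometry of Proposition~\ref{prop-level}, in order to reduce the statement to showing that the regular level sets of $f$ are totally umbilical round spheres. Working in the open region where $\nabla f\neq 0$, Proposition~\ref{prop-level} guarantees that $R$, $|\nabla f|$, the mean curvature $H$ and the squared norm of the second fundamental form $|h|^{2}$ are constant along each regular connected component $\Sigma$ of a level set, hence functions of the signed distance $r$ to $\Sigma$ alone. Since $R$ is constant on the $\Sigma$'s, $\nabla R$ is proportional to $\nabla f$, so equation~\eqref{equ2} specialized to $n=3$ forces $\Ric(\nabla f,\,\cdot\,)$ to be proportional to $\nabla f$; that is, $\nabla f$ is an eigenvector of the Ricci tensor. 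I would then choose at each point an orthonormal frame $\{\nu,e_{2},e_{3}\}$ with $\nu=\nabla f/|\nabla f|$ and $e_{2},e_{3}$ tangent to $\Sigma$ and diagonalizing the tangential Ricci tensor. By the soliton equation~\eqref{esol} the shape operator is then diagonal, with $h_{22}-h_{33}=-(\mu_{2}-\mu_{3})/|\nabla f|$ (the term $\rho R+\lambda$ cancelling in the difference), so that rotational symmetry becomes equivalent to the equality $\mu_{2}=\mu_{3}$ of the tangential Ricci eigenvalues, i.e. to the vanishing of the traceless second fundamental form $\mathring{h}$.

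The engine of the argument is a radial ODE for $\mathring{h}$. Because $|h|^{2}$ and $H$ are constant on each $\Sigma$, so is $|\mathring{h}|^{2}=|h|^{2}-\tfrac12 H^{2}$, which is therefore a single function $\psi=\psi(r)$. Differentiating $|\mathring h|^2$ along $\nu$ and inserting the radial Riccati equation together with the three--dimensional identity writing the mixed sectional curvatures $R_{\nu e_{2}\nu e_{2}}$ and $R_{\nu e_{3}\nu e_{3}}$ in terms of the Ricci eigenvalues, one finds that the traceless curvature term reorganizes into a multiple of $\mathring h$ itself; consequently $\psi$ satisfies a linear homogeneous first--order equation $\psi'=a(r)\,\psi$, with coefficient $a$ depending only on $r$ through $H$ and $|\nabla f|$. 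Hence $\psi$ either vanishes identically or is nowhere zero.

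The main obstacle is precisely to exclude the nonvanishing branch, and this is where positive sectional curvature and the sign hypotheses on $(\rho,\lambda)$ are used. Under these assumptions I would argue that $f$ possesses a critical point — the ``center'' of the soliton, in analogy with the tip of the Bryant soliton — near which the small level sets are almost round geodesic spheres, forcing $\psi\to 0$ there; the dichotomy above then yields $\psi\equiv 0$ throughout. The sign conditions enter to control the sign of $\rho R+\lambda$ and the monotonicity of the relevant radial quantities, so that the foliation closes up smoothly at a single critical point, while positivity of the curvature guarantees that the limiting sphere is genuinely round. This closing step is the delicate point of the proof.

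Once $\psi\equiv 0$, the level sets are totally umbilical, so $\partial_{r}g_{ij}=H\,g_{ij}$ and the metric takes the warped form $g=dr^{2}+\phi(r)^{2}\,\bar g$. Finally, applying the Gauss equation $R^{\Sigma}=R-2R_{rr}-|h|^{2}+H^{2}$ from Proposition~\ref{prop-level} together with the positivity of the ambient sectional curvature shows that the fibers carry constant positive Gaussian curvature, hence are round spheres $\SS^{2}$ (up to a quotient). Therefore $(M^{3},g)$ is rotationally symmetric, and the remaining analysis of the warping function $\phi$ is exactly the one carried out in Theorem~\ref{teo-warp-steady}.
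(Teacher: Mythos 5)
Your proposal follows the paper's skeleton (rectifiability from Theorem~\ref{teo-esolrect}, the level--set geometry of Proposition~\ref{prop-level}, a unique critical point, then a warped product over a round $\SS^{2}$), but its engine is genuinely different. The paper never writes an ODE for the traceless second fundamental form: it observes that under the stated sign hypotheses $h_{ij}=-\big(R_{ij}-(\rho R+\lambda)g_{ij}\big)/|\nabla f|$ is positive definite, hence $|h|^{2}<H^{2}$, so the Gauss equation gives $R^{\Sigma}>0$; since $\Sigma$ is two--dimensional and $R^{\Sigma}$ is constant along $\Sigma$ by Proposition~\ref{prop-level}, each level set has constant positive Gaussian curvature and is a round sphere. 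Your alternative engine is in fact correct in dimension three: on tangential directions $R_{i\nu j\nu}=R_{ij}+R_{\nu\nu}g_{ij}-\tfrac{R}{2}g_{ij}$, and substituting the soliton equation turns the matrix Riccati equation into $\partial_{r}\mathring{S}=(|\nabla f|-H)\mathring{S}$ for the traceless shape operator, so $\psi=|\mathring{S}|^{2}$ does satisfy a linear homogeneous ODE with coefficient $2(|\nabla f|-H)$. This route has the merit of establishing umbilicity directly, which is what really licenses the passage from ``round level sets'' to the warped--product form.

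There are, however, two genuine gaps. The first is the existence (and uniqueness) of the critical point, which you defer to an analogy with the tip of the Bryant soliton: this is exactly where the hypotheses $\rho<0,\lambda\leq 0$ or $\rho\geq 1/2,\lambda\geq 0$ and the positivity of the sectional curvature must act. The paper's argument is short but unavoidable: in dimension three positive sectional curvature is equivalent to $\Ric-\tfrac{1}{2}R\,g<0$; together with $R>0$ and the sign hypotheses, equation~\eqref{esol} then makes $\nabla^{2}f$ positive (resp.\ negative) definite, so $f$ is strictly convex (resp.\ concave), its level sets are compact and it has at most one critical point; if it had none, $M^{3}$ would have two ends, hence contain a line, and the Cheeger--Gromoll splitting theorem (using $\Ric>0$) would split off a factor $\RR$, contradicting positive sectional curvature. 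Your appeal to ``the sign of $\rho R+\lambda$ and monotonicity of radial quantities'' does not substitute for this; without it your trajectory analysis has no anchor.

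The second gap is the boundary condition for your ODE. Near a nondegenerate minimum $O$ the level sets of $f$ are modeled on the ellipsoids $\{\nabla^{2}f_{|O}(x,x)=\mathrm{const}\}$, not on geodesic spheres; unless $\nabla^{2}f_{|O}$ is a multiple of $g_{|O}$ --- which is part of what you are trying to prove --- one has $\psi\sim c\,r^{-2}\rightarrow+\infty$, not $\psi\rightarrow 0$, so the claim ``the level sets are almost round, hence $\psi\rightarrow 0$'' is circular. The step can be repaired quantitatively: one always has $\psi=O(r^{-2})$, while $\frac{d}{dr}\ln|\Sigma_{r}|=H$ and $|\Sigma_{r}|=O(r^{2})$ give $\int_{r}^{r_{0}}H\,ds=2\ln(r_{0}/r)+O(1)$, so integrating your ODE yields $\psi(r_{0})\leq C\,\psi(r)\,(r/r_{0})^{4}\leq C'\,r^{2}/r_{0}^{4}\rightarrow 0$ as $r\rightarrow 0$, whence $\psi\equiv 0$. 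But as written, the decisive step of your proof is asserted rather than proved, whereas the paper's two--dimensional uniformization argument avoids the issue altogether.
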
 
\begin{proof} We give the proof only in the case $\rho\geq 1/2$ and $\lambda\geq 0$. The proof of the other part of the statement follows with minor changes and it is left to the interested reader. 

First of all we notice that $g$ has positive sectional curvature if and only if the Einstein tensor $\Ric-(1/2)R\,g$ is negative definite. Hence, from the soliton equation, it follows that $f$ is a strictly convex function. In particular $M^{3}$ is diffeomorphic to $\RR^3$ and $f$ has at most one critical point. We claim that $f$ has exactly one critical point. In fact, by the strict convexity of $f$, we have that all of its level sets are compact. Now, if $f$ has no critical points, then the manifold would have two ends (see~\cite[Remark 2.7]{bishoponeill}). Since $\Ric\geq 0$, it would follow from Cheeger--Gromoll Theorem that the manifold splits a line, but this contradicts the strict positivity of the sectional curvature. Hence, the claim is proved. Let $O\in M^{3}$ be the unique critical point of $f$ and let $\Sigma \subset M^{3}\setminus\{O\}$ be a level set of $f$. Then $\Sigma$ is compact, regular, orientable and its second fundamental form is given by
$$
h_{ij} \,=\, -\frac{R_{ij} -
  (\rho R+\lambda) g_{ij}}{|\nabla f|}\,,
$$
for $i,j=1,2$. Since $\rho\geq 1/2$, $\lambda\geq 0$ and $g$ has positive sectional curvature, then $h_{ij}$ is positive definite. In particular $|h|^{2} < H^{2}$ and from Gauss equation we have
$$
R^{\Sigma} = R - 2R_{rr} -|h|^{2} +H^{2} >0 \,.
$$
Using Theorem~\ref{teo-esolrect} and Proposition~\ref{prop-level} we have that $(\Sigma,g^{\Sigma})$ has constant positive curvature. This implies that, up to a constant factor, $(\Sigma, g^{\Sigma})$ is isometric to $(\SS^{2}, g^{\SS^{2}})$ and on $M^{3}\setminus\{O\}$ the metric $g$ takes the form
$$
g \,=\, dr\otimes dr + \om(r)^{2}g_{\SS^{2}}\,,
$$
where $r(\cdot)=dist(O,\cdot)$ and $\om:\RR^{+}\rightarrow \RR^{+}$ is a positive smooth function.
\end{proof}

A possible extension of Theorems~\ref{teo-rot} above to higher dimensions may be obtained in the spirit of~\cite{caochen},~\cite{mancat1} and~\cite{mmancatmazrim}, under the additional hypothesy that the manifold is locally conformally flat. We notice that in this approach, the rectifiability is most of the time deduced as a consequence of the locally conformally flatness coupled with the soliton structure. In our case it would still be possible to proceed this way, however we will take advantage of the rectifiability provided by Theorem~\ref{teo-esolrect} to get a shortcut in the proof.

\begin{teo}\label{teo-lcf} Let $(M^{n},g)$ be a complete $n$--dimensional, $n\geq 4$, locally conformally flat gradient $\rho$--Einstein soliton with $\rho<0$ and $\lambda\leq 0$ or $\rho\geq 1/2$ and $\lambda\geq 0$. If $(M^{n},g)$ has positive sectional curvature, then it is rotationally symmetric.
\end{teo}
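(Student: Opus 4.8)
The plan is to adapt the strategy of Theorem~\ref{teo-rot} to dimension $n\geq 4$, the only genuinely new point being that a regular level set is now $(n-1)$--dimensional, so the constancy of its intrinsic scalar curvature (provided by rectifiability) no longer suffices to identify it as a space form: here the local conformal flatness must be used to upgrade this information. As in the three--dimensional case, I would treat in detail only the case $\rho\geq 1/2$, $\lambda\geq 0$, the complementary case $\rho<0$, $\lambda\leq 0$ being entirely analogous (there $f$ turns out to be strictly concave rather than convex).

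\emph{Step 1: eigenvalue structure of $\Ric$.} Since the soliton is rectifiable by Theorem~\ref{teo-esolrect}, Proposition~\ref{prop-level} gives that $R$ and $|\nabla f|$ are constant along each regular connected component $\Sigma_c$ of a level set of $f$. In particular $\nabla R$ is proportional to $\nabla f$, and equation~\eqref{equ2} then forces $\nabla f/|\nabla f|$ to be an eigenvector of the Ricci tensor, say $\Ric(\nabla f,\,\cdot\,)=\mu\,\nabla f$ and $\nabla R=\psi\,\nabla f$ on $\{\nabla f\neq 0\}$. The key claim is that, on this set, the restriction of $\Ric$ to $T\Sigma_c$ is a multiple of the induced metric. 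To prove it I would equate two expressions for $\nabla_c R_{ab}-\nabla_b R_{ac}$: on one hand the soliton identity~\eqref{eq3}, specialized to $\alpha=1$, $\beta=0$, $\gamma=\rho$, $\eta=0$, namely
\[
\nabla_c R_{ab}-\nabla_b R_{ac}=\rho\big(g_{ab}\nabla_c R-g_{ac}\nabla_b R\big)-R_{cabd}\nabla_d f\,;
\]
on the other hand the vanishing of the Cotton tensor, which holds for $n\geq 4$ because $W\equiv 0$, giving $\nabla_c R_{ab}-\nabla_b R_{ac}=\tfrac{1}{2(n-1)}(g_{ab}\nabla_c R-g_{ac}\nabla_b R)$. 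Subtracting yields a formula for $R_{cabd}\nabla_d f$; substituting the locally conformally flat expression of $\Rm$ in terms of $\Ric$ and $R$, together with $\Ric(\nabla f,\cdot)=\mu\nabla f$ and $\nabla R=\psi\nabla f$, and finally selecting $a,b$ tangent to $\Sigma_c$ and $c$ along $\nabla f$, produces the desired identity $R_{ab}=\nu\,g_{ab}$ on $T\Sigma_c$, with $\nu$ a function of the signed distance $r$ alone. I expect this computation to be the main obstacle, since it is the one place where conformal flatness is essential.

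\emph{Step 2: convexity and global structure.} Writing the soliton equation~\eqref{esol} as $\nabla^2 f=(\rho R+\lambda)g-\Ric$, Step~1 shows that on $\{\nabla f\neq 0\}$ the Hessian has eigenvalues $\rho R+\lambda-\mu$ (in the direction of $\nabla f$) and $\rho R+\lambda-\nu$ (on $T\Sigma_c$). Positive sectional curvature, read off from the conformally flat expression of $\Rm$, forces $\mu>0$ and $(n-1)\nu>\mu$, whence, using $R=\mu+(n-1)\nu$ and $\rho\geq 1/2$, $\lambda\geq 0$, both Hessian eigenvalues are strictly positive; by continuity the same strict positivity persists at the critical points of $f$, so $f$ is strictly convex on all of $M^n$. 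Exactly as in Theorem~\ref{teo-rot}, strict convexity makes the level sets compact and leaves at most one critical point; completeness together with $\Ric>0$ and the Cheeger--Gromoll splitting theorem rules out the case of no critical point (which would yield two ends and a line, contradicting positivity), so $f$ has a single critical point $O$, $M^n$ is diffeomorphic to $\RR^n$, and every regular level set is connected and diffeomorphic to $\SS^{n-1}$.

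\emph{Step 3: conclusion.} By Step~1 the second fundamental form $h_{ab}=-\big(R_{ab}-(\rho R+\lambda)g_{ab}\big)/|\nabla f|$ is a multiple of the metric, so $\Sigma_c$ is totally umbilic with constant mean curvature. Inserting the conformally flat form of $\Rm$ and the umbilicity into the Gauss equation $R^{\Sigma}_{abcd}=R_{abcd}+h_{ac}h_{bd}-h_{ad}h_{bc}$, and using $R_{ab}=\nu g_{ab}$ on $T\Sigma_c$, shows that the intrinsic curvature tensor of $(\Sigma_c,g^{\Sigma})$ equals a constant multiple of $(g_{ac}g_{bd}-g_{ad}g_{bc})$, i.e. $\Sigma_c$ has constant sectional curvature; positivity of the ambient sectional curvature makes this curvature positive. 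Being simply connected (diffeomorphic to $\SS^{n-1}$, $n-1\geq 3$) and of constant positive curvature, $\Sigma_c$ is a round sphere, and the rectifiability then yields the warped form $g=dr\otimes dr+\omega(r)^2\,g_{\SS^{n-1}}$ on $M^n\setminus\{O\}$, which is precisely the asserted rotational symmetry.
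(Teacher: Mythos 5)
Your proposal follows essentially the same route as the paper: rectifiability via Theorem~\ref{teo-esolrect} and Proposition~\ref{prop-level}, then the vanishing of the Cotton tensor (from $W\equiv 0$, $n\geq 4$) combined with identity~\eqref{eq3} to show that the tangential part of $\Ric$ is a multiple of the induced metric, then the Gauss equation plus the conformally flat decomposition of $\Rm$ to conclude that the level sets are round spheres. Your Step~1 is just a repackaging of the paper's key computation $0=C_{ijr}=\tfrac{|\nabla f|^2}{n-2}\big(h_{ij}-\tfrac{H}{n-1}g_{ij}\big)$: since $h_{ij}=-\big(R_{ij}-(\rho R+\lambda)g_{ij}\big)/|\nabla f|$, your statement $R_{ij}=\nu\,g_{ij}$ on $T\Sigma_c$ and the paper's umbilicity statement are the same fact, obtained from the same two ingredients ($C\equiv0$ and the soliton identity for $\nabla_cR_{ab}-\nabla_bR_{ac}$). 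Your Step~3 then matches the paper's Gauss--equation computation of $R^{\Sigma}_{ijij}$ verbatim in content.

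The one place where your write--up has a genuine logical flaw is the convexity argument in Step~2. You establish strict positivity of the two Hessian eigenvalues only on $\{\nabla f\neq 0\}$ (because your $\mu$ and $\nu$ are defined through the level--set structure), and then assert that \emph{strict} positivity ``persists at the critical points by continuity.'' Continuity only yields $\nabla^{2}f\geq 0$ at a critical point, which is not enough: a degenerate minimum set could a priori be higher--dimensional, and the subsequent claims (unique critical point, $M^n$ diffeomorphic to $\RR^n$, compact connected spherical level sets) all lean on strict convexity. The repair is exactly the paper's opening observation, and it removes any reference to $\nabla f$: local conformal flatness plus positive sectional curvature give, pointwise and at \emph{every} point of $M^n$, the eigenvalue inequality $\lambda_i+\lambda_j>\tfrac{1}{n-1}R$ for $\Ric$, hence $\Ric-\tfrac12 R\,g<0$ everywhere; then $\nabla^{2}f=(\rho R+\lambda)g-\Ric\geq \tfrac12 R\,g-\Ric>0$ everywhere, using $\rho\geq 1/2$, $\lambda\geq 0$ and $R>0$. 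In other words, the algebraic inequalities you derive for $\mu$ and $\nu$ are really the statement that the Einstein tensor is negative definite, which is a purely pointwise consequence of the curvature hypotheses and should be proved as such, not through the level--set eigenvalue decomposition. With that substitution your argument is complete and coincides with the paper's proof.
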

\begin{proof} 
We give the proof only in the case $\rho\geq 1/2$ and $\lambda\geq 0$. The proof of the other part of the statement follows with minor changes and it is left to the interested reader. 

First of all we notice that since $g$ is locally conformally flat and it has positive sectional curvature then the tensor $\Ric-(1/2)R\,g$ is negative definite. In fact, from the decomposition formula for the curvature tensor, it follows that 
$$
\lambda_{i}+\lambda_{j} \,> \,\frac{1}{n-1}\,R \,, 
$$
for every $i=1,\ldots,n$, where $\lambda_{i}$ are the eigenvalues of the Ricci tensor. Hence, from the soliton equation, it follows that $f$ is a strictly convex function. In particular $M^{n}$ is diffeomorphic to $\RR^n$ and $f$ has at most one critical point. We claim that $f$ has exactly one critical point. In fact, by the strict convexity of $f$, we have that all of its level sets are compact. Now, if $f$ has no critical points, then the manifold would have two ends (see~\cite[Remark 2.7]{bishoponeill}). Since $\Ric\geq 0$, it would follow from Cheeger--Gromoll Theorem that the manifold splits a line, but this contradicts the strict positivity of the sectional curvature. Hence, the claim is proved. Let $O\in M^{n}$ be the unique critical point of $f$ and let $\Sigma \subset M^{n}\setminus\{O\}$ be a level set of $f$.  To be definite, we choose the sign of $r$ in such a way that $f' = |\nabla f|$.  By Theorem~\ref{teo-esolrect} and Proposition~\ref{prop-level}, we also have that $f, |\nabla f|, R$ and $R^{\Sigma}$, which is the scalar curvature induced on the level sets of $f$, only depend on $r$. The proof follows the one in~\cite[Theorem 1.1]{mmancatmazrim}. With the same convention as in Proposition~\ref{prop-level}, the second fundamental form and the mean curvature of $\Sigma$ are given by
$$
h_{ij} \,=\, -\frac{R_{ij} - (\rho R+\lambda) g_{ij}}{|\nabla f|} \quad\quad \hbox{and} \quad\quad H \, = \, - \, \frac{(1-(n-1)\rho) R - R_{rr} - (n-1) \l}{|\nabla f|} \,,
$$
for $i,j=1,\dots, n-1$. We are going to prove that $(\Sigma, g^{\Sigma})$ is totally umbilic, namely 
$$
h_{ij}~-~ (H/(n-1) )~g_{ij} \,=\, 0 \,.
$$
In the spirit of~\cite[Theorem 1.1]{mmancatmazrim}, we introduce the Cotton tensor  
$$
C_{abc} = \nabla_{c} R_{ab} - \nabla_{b} R_{ac} -
\frac{1}{2(n-1)} \big( \nabla_{c} R \, g_{ab} - \nabla_{b} R \,
g_{ac} \big)\, ,
$$
for $a,b,c= 1, \ldots ,n$. Now, if we assume that the manifold is locally conformally flat, then the Cotton tensor is identically zero, since 
$$
0 \, = \, \nabla_d W_{abcd} \, = \, - \frac{n-3}{n-2} \, C_{abc} \, ,
$$
where $W$ is the Weyl tensor of $g$. Using Lemma~\ref{lemg} and the formul\ae \, for the second fundamental form and the mean curvature of $\Sigma$, and taking advantage of the rectifiability, it is straightforward to compute
$$
0 \, = \, C_{ijr} \, = \, \frac{|\nabla f|^2}{(n-2)} \, \Big( \, h_{ij} - \frac{H}{n-1} g_{ij}  \, \Big) \, ,
$$
for $i,j = 1, \dots, n-1$. Hence, the umbilicity is proven. From the Gauss equation (see also~\cite[Lemma 3.2]{caochen} for a
similar argument), one can see that the sectional curvatures of
$(\Sigma,g^{\Sigma})$ are
given by
\begin{eqnarray*}
R_{ijij}^{\Sigma} &=& R_{ijij} + h_{ii}h_{jj} - h_{ij}^{2}\\
&=& \, \tfrac{1}{n-2}\big(R_{ii}+R_{jj}\big) -
\tfrac{1}{(n-1)(n-2)}R + \tfrac{1}{(n-1)^{2}}H^{2}\\
&=& \, \tfrac{2}{n-2}R_{ii} - \tfrac{1}{(n-1)(n-2)}R +
\tfrac{1}{(n-1)^{2}}H^{2}\\
&=& - \tfrac{2}{(n-1)(n-2)}H|\nabla f| + \tfrac{2}{n-2}\lambda -
\tfrac{1-2(n-1) \rho}{(n-1)(n-2)}R + \tfrac{1}{(n-1)^{2}}H^{2}\,,
\end{eqnarray*}
for $i,j=1,\dots, n-1$, where in the second equality we used the decomposition formula for the Riemann tensor, the locally conformally flatness of $g$ and the umbilicity. Since, by Proposition~\ref{prop-level}, all the terms on the right hand side are constant on $\Sigma$, we obtain that the sectional curvatures of $(\Sigma,g^{\Sigma})$ are constant. The positivity follows from the Gauss equation, the umbilicity and the fact that $(M^n, g)$ has positive sectional curvature. It follows that, up to a constant factor only depending on $r$, $(\Sigma,g^{\Sigma})$ is isometric to $(\SS^{n-1}, g^{\SS^{n-1}})$. Hence, on $M^{n}\setminus \{O\}$ the metric $g$ takes the form
$$
g \,=\, dr\otimes dr + \om(r)^{2}g_{\SS^{n-1}}\,,
$$
where $r(\cdot)=dist(O,\cdot)$ and $\om:\RR^{+}\rightarrow \RR^{+}$ is a positive smooth function. In particular, this shows that $g$ is rotationally symmetric.
\end{proof}

\

\section{Warped product gradient $\rho$--Einstein solitons with canonical fibers}
\label{sect-warped}

In this section, motivated by Theorems~\ref{teo-rot}, \ref{teo-lcf}, we study complete simply connected gradient $\rho$--Einstein solitons $(M^{n},g)$, $n\geq 3$, which are {\em warped product with canonical fibers}. More precisely, we assume that $g$ is of the form
\begin{equation}\label{eqwarp}
g = dr \otimes dr + \om(r)^{2} g_{can} \quad\quad\hbox{in $M^{n}\setminus \Lambda$}\,,
\end{equation}
where $g_{can}$ is a constant curvature metric on a $(n-1)$--dimensional manifold, $r\in(r_{*},r^{*})$, $-\infty\leq r_{*}<r^{*}\leq +\infty$, the warping factor $\om:(r_{*},r^{*})\rightarrow \RR^{+}$ is a positive smooth function and $\Lambda\subset M^{n}$ consists of at most two points, depending on the behavior of $\om$ as $r\rightarrow r_{*}$ and $r\rightarrow r^{*}$.

The main focus of this section will be the analysis of gradient steady $\rho$--Einstein solitons with positive sectional curvature which are warped product with canonical fibers.

%

\begin{rem}
\label{rem-ricci}
It is worth pointing out that all the analysis of this section is consistent with the limit case of gradient Ricci solitons $(\rho = 0)$.  
\end{rem}
For notational convenience we set $m=n-1$. We agree that $\Ric_{can} = (m-1) \k \,g_{can}$, $\k\in\{-1,0,1\}$. In $M^{n}\setminus\Lambda$, the Ricci curvature and the scalar curvature of $g$ have the form
\begin{eqnarray*}
\Ric &=& -m \frac{\om''}{\om} dr\otimes dr + \big((m-1)(\k-(\om')^{2}) - \om\,\om''\big)\,g_{can}\,,\\
R &=& -2m\frac{\om''}{\om} + m(m-1)\,\frac{\k-(\om')^{2}}{\om^{2}}\,.
\end{eqnarray*}
Moreover, the Hessian of $f$ reads 
$$
\nabla^{2}f = f'' dr\otimes dr + \om \om' f' g_{can}\,.
$$
Hence, the soliton equation~\eqref{soliton} reduces to
\begin{equation*}
\begin{cases} f''\om^{2}-(m-2m\rho)\om\om''+m(m-1)\rho (\om')^{2}-\l\om^{2}-m(m-1)\rho\k = 0 \\
f'\om\om'  -(1-2m\rho)\om\om''-(m-1)(1-m\rho)(\om')^{2}-\l\om^{2}+(m-1)(1-m\rho)\k =0 \,.
\end{cases} 
\end{equation*}
Introducing the variables
$$
x = \om' \quad\quad \hbox{and} \quad\quad y = -\om f' 
$$
and the independent variable $t$, which satisfies $dt=(1/\om)dr$, one obtains the first--order system
\begin{equation}\label{system}
\begin{cases} 
(1-2m\rho)\,\dot{x} = (m-1)(1-m\rho)(\k-x^{2}) -xy -\l \om^{2} \\
(1-2m\rho) \,\dot{y} = -m(m-1)(1-(m+1)\rho)(\kappa-x^{2})+(1+m-4m\rho)xy+(m-1)\l\om^{2}\hspace{-0.45cm}\\
\hspace{1.69cm}\dot{\om} = x \om \, ,
\end{cases} 
\end{equation}
for every $t\in(t_{*},t^{*})$ where $t_{*}=\lim_{r\rightarrow r_{*}}t(r)$ and $t^{*}=\lim_{r\rightarrow r^{*}}t(r)$. In the system above $(\,\dot{ }\,)$ denotes the derivative with respect to the $t$ variable, and with a small abuse of notation we consider $\om$ as a function of $t$. It is immediate to see that the equilibrium points of this system in the $xy\om$--space are $P=(1,0,0)$ and $Q=(-1,0,0)$. We start with some general consideration about the interval of definition of the variables $r$ and $t$.

%

We observe that, if $M^{n}$ is compact, we have that $-\infty<r_{*}<r^{*}<+\infty$ and
$$
\lim_{r\rightarrow r_{*}} \om(r) \,=\, 0 \quad\quad\hbox{and} \quad\quad \lim_{r\rightarrow r_{*}} \om'(r) \,=\, 1\,,
$$
$$
\,\,\,\,\lim_{r\rightarrow r^{*}} \om(r) \,=\, 0 \quad\quad\hbox{and} \quad\quad \lim_{r\rightarrow r^{*}} \om'(r) \,=\, -1\,.
$$
In particular, we have that $\kappa=1$ and $(M^{n},g)$ is rotationally symmetric (see~\cite[Lemma 9.114]{besse}). Next we claim that, $t_{*}=-\infty$ and $t^{*}=+\infty$. In fact
$$
t_{*} \,=\, t(r_{0})-\lim_{r\rightarrow r_{*}} \int_{r}^{r_{0}} \frac{ds}{\om(s)} 
\quad\quad\hbox{and}\quad\quad
t^{*} \,=\, t(r_{0})+\lim_{r\rightarrow r^{*}} \int_{r_{0}}^{r} \frac{ds}{\om(s)} \,,
$$
for every fixed $r_{0}\in(r_{*},r^{*})$. Since $\om(s)\rightarrow 0$ and $\om'(s)\rightarrow 1$ as $s\rightarrow r_{*}$, we have that $\om(s)\simeq s-r_{*}$. Analogously $\om(s) \simeq r^{*}-s$, as $s\rightarrow r^{*}$. In particular, the two integrals diverge and the claim follows. Since $M^{n}$ is compact, the limits of $f'(r)$ as $r$ tends to $r_{*},r^{*}$ exist and are zero. Thus, the solution $(x,y,\om)$ converge to the equilibria $P=(1,0,0)$ and $Q=(-1,0,0)$ as $t$ tends to $-\infty$ and $+\infty$ respectively.

We pass now to consider complete, noncompact, gradient $\rho$--Einstein solitons which are warped product with canonical fibers. If $R_{rr}\equiv 0$, we have that the only admissible solutions are the flat $\RR^{n}$ or cylinders with canonical fibers and the interval of definition of $r$ is either the half straight line or the entire line respectively. More precisely we have the following classification.

\begin{teo} 
\label{teo-warped-cyl}
Let $(M^{n},g)$, $n\geq 3$, be a complete, noncompact, gradient $\rho$--Einstein soliton which is a warped product with canonical fibers as in~\eqref{eqwarp}. If $R_{rr}\equiv 0$, then $(M^{n},g)$ is either homothetic to the round cylinder $\RR\times\SS^{n-1}$, or to the hyperbolic cylinder $\RR\times\mathbb{H}^{n-1}$ or to the flat $\RR^{n}$.
\end{teo}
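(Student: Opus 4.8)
The plan is to translate the hypothesis $R_{rr}\equiv 0$ into a condition on the warping factor and then run a short case analysis. From the expression for the Ricci tensor recorded above one reads off $R_{rr} = -m\,\omega''/\omega$, so $R_{rr}\equiv 0$ is equivalent to $\omega''\equiv 0$, i.e.\ to $\omega(r)=ar+b$ being affine; equivalently, in the system~\eqref{system} one has $\dot x = \omega\,\omega'' = 0$, so that $x=\omega'$ is constant along the solution. I would then distinguish two cases according to whether this constant vanishes.

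In the first case, $\omega'\equiv 0$, so $\omega\equiv c$ is a positive constant and $(M^n,g)$ is a Riemannian product $\RR\times(N^{n-1},c^2 g_{can})$, where $N^{n-1}$ has constant curvature $\kappa/c^2$. Substituting $\omega\equiv c$ and $\omega'=\omega''=0$ into the two structural equations of the soliton (the system preceding~\eqref{system}), the second equation collapses to $\lambda c^2 = (m-1)(1-m\rho)\kappa$, which fixes $\lambda$, while the first gives $f'' = \lambda + m(m-1)\rho\kappa/c^2 = \mathrm{const}$; hence $f$ is a polynomial of degree at most two in $r$, smooth on all of $\RR$, and completeness is automatic. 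According to the sign of $\kappa\in\{-1,0,1\}$, the fiber $N^{n-1}$ is, up to scaling, $\mathbb{H}^{n-1}$, $\RR^{n-1}$ or $\SS^{n-1}$, so that $(M^n,g)$ is homothetic to the hyperbolic cylinder $\RR\times\mathbb{H}^{n-1}$, to the flat $\RR^n=\RR\times\RR^{n-1}$, or to the round cylinder $\RR\times\SS^{n-1}$, respectively.

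In the second case, $\omega'\equiv x\neq 0$, so $\omega(r)=xr+b$ is a non-constant affine function and vanishes at a single finite value $r_0$, which lies at finite distance from any interior point. Since $(M^n,g)$ is a complete smooth manifold and the points of $\Lambda$ are smooth points, the metric must close up smoothly as $\omega\to 0$; by the standard regularity conditions for warped products (see~\cite[Lemma~9.114]{besse}) this forces $\kappa=1$ together with $\omega'(r_0)=\pm 1$, i.e.\ $|x|=1$. Consequently $\omega(r)=|r-r_0|$ and $g = dr\otimes dr + (r-r_0)^2 g_{\SS^{n-1}}$ is precisely the flat metric on $\RR^n$ written in polar coordinates; equivalently $\kappa=x^2=1$ makes both $R$ and $\Ric$ vanish identically, so $(M^n,g)$ is flat $\RR^n$. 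Collecting the two cases yields the claimed trichotomy up to homothety.

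The hard part will be the second case, where one must exclude conical singularities, because the soliton equations alone leave an ambiguity. Indeed, comparing $f''$ computed from the first structural equation with the derivative of $f'$ obtained from the second gives $(m-1)(1-2m\rho)(\kappa-x^2)=0$, so that either $\kappa=x^2$ (whence $\kappa=1$ and $|x|=1$) or $\rho=1/2(n-1)$. In the Schouten case $\rho=1/2(n-1)$ the algebra degenerates and does not by itself rule out an affine $\omega$ with $|x|\neq 1$; it is exactly here that completeness, through the smooth closing-up condition at $\omega=0$, becomes indispensable and again produces flat $\RR^n$. This is the step I expect to require the most care, since it is geometric rather than purely computational.
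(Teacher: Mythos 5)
Your proof is correct and follows essentially the same route as the paper: $R_{rr}\equiv 0$ forces $\omega$ to be affine, the constant-slope case yields the round cylinder, the hyperbolic cylinder, or flat $\RR^n$ via the two structural equations, and the nonzero-slope case is handled by the smooth closing-up condition at $\omega=0$ from~\cite[Lemma~9.114]{besse}, which forces $\kappa=1$ and slope $\pm1$, giving flat $\RR^n$ in polar coordinates. Your extra algebraic observation that the soliton equations alone give $(m-1)(1-2m\rho)(\kappa-x^{2})=0$, so that the geometric smoothness argument is strictly indispensable only in the Schouten case $\rho=1/2(n-1)$, is a refinement the paper does not record, but it does not alter the structure of the argument.
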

\begin{proof} We observe that, as a function of $r$, $\om$ is given by $\om(r)=\om_{0}+x_{0}(r-r_{0})$. Since $(M^{n},g)$ is assumed to be smooth and complete, the only admissible value of $x_{0}$ are $0,1$ and $-1$. 

{\bf Case 1.} When $x_{0}=0$, we have that $(r_{*},r^{*})=(-\infty,+\infty)$ and $\omega \equiv \om_0$. Depending on the sign of $1-(n-1)\rho$, one has the following cases.
\begin{itemize} 
\item $\rho <1/(n-1)$. In this case either we have $\lambda=0$, $\kappa=0$, no restrictions on $\om_0$ and the soliton is trivial; or $\lambda >0$, $\kappa =1$, $\om_0^2 = (n-2)(1-(n-1)\rho) / \l$ and $f(r) = \frac{\lambda }{2(1-(n-1)\rho)} {r^2} + a_0 r + b_0$, for some constants $a_0, b_0 \in \RR$; or $\lambda <0$, $\kappa =-1$, $\om_0^2 = - (n-2)(1-(n-1)\rho) / \l$ and $f(r) = \frac{\lambda }{2(1-(n-1)\rho)} {r^2} + a_0 r + b_0$, for some constants $a_0, b_0 \in \RR$.
\item $\rho = 1/(n-1)$. In this case we have $\lambda =0$, no restrictions on $\kappa$, no restrictions on $\omega_0$ and 
$f(r) = \frac{(n-2) \kappa }{2\omega_0^2} {r^2} + a_0 r + b_0$, for some constants $a_0, b_0 \in \RR$.
\item $\rho > 1/(n-1)$. In this case either we have $\lambda=0$, $\kappa=0$, no restrictions on $\om_0$ and the soliton is trivial; or $\lambda >0$, $\kappa =-1$, $\om_0^2 = - (n-2)(1-(n-1)\rho) / \l$ and $f(r) = \frac{\lambda }{2(1-(n-1)\rho)} {r^2} + a_0 r + b_0$, for some constants $a_0, b_0 \in \RR$; or $\lambda <0$, $\kappa =1$, $\om_0^2 =  (n-2)(1-(n-1)\rho) / \l$ and $f(r) = \frac{\lambda }{2(1-(n-1)\rho)} {r^2} + a_0 r + b_0$, for some constants $a_0, b_0 \in \RR$.
\end{itemize}

{\bf Case 2.} When $x_{0}=1$, we have that $(r_{*},r^{*})=(r_{0}-\om_{0},+\infty)$, $\om(r)=\om_{0}+r-r_{0}$,  and $\kappa=1$ (see~\cite[Lemma 9.114]{besse}). In particular, the metric is rotationally symmetric and {\em flat}, more precisely, $(M^n, g)$ is isometric to $(\RR^n, g^{\RR^n})$. Moreover, we have no restrictions on both $\lambda$ and $\om_0$ and $f(r)= \frac{\l}{2} r^2 + a_0 r + b_0$, for some constants $a_0, b_0 \in \RR$.

{\bf Case 3.} When $x_{0}=-1$, we have $(r_{*},r^{*})=(-\infty,\om_{0}-r_{0})$, $\om(r)=\om_{0}-r+r_{0}$  and $\kappa=1$. In particular, the metric is rotationally symmetric and {\em flat}, more precisely, $(M^n, g)$ is isometric to $(\RR^n, g^{\RR^n})$. Moreover, we have no restrictions on both $\lambda$ and $\om_0$ and $f(r)= \frac{\l}{2} r^2 + a_0 r + b_0$, for some constants $a_0, b_0 \in \RR$. This completes the proof.
\end{proof}

We pass now to consider complete, noncompact, gradient $\rho$--Einstein solitons which are warped product with canonical fibers for which $R_{rr}>0$ for every $r\in(r_{*},r^{*})$. First of all, we observe that the maximal interval of definition $(r_{*},r^{*})$ cannot coincide with $\RR$, since $\om$ is positive and strictly concave. Without loss of generality we can assume $-\infty<r_{*}$ and $r^{*} = + \infty$, since $M^n$ is noncompact (the same considerations will apply to the case $r^{*}<+\infty$ and $-\infty = r_{*}$). By smoothness of $(M^{n},g)$, we have that 
$$
\lim_{r\rightarrow r_{*}} \om(r) \,=\, 0 \quad\quad\hbox{and} \quad\quad \lim_{r\rightarrow r_{*}} \om'(r) \,=\, 1\,.
$$ 
In particular, we have that $\kappa=1$, thus, $(M^{n},g)$ is rotationally symmetric (see again~\cite[Lemma 9.114]{besse}) and diffeomorphic to $\RR^n$. We note incidentally that, from the point view of system~\eqref{system}, we are looking to solutions which `come out' from the equilibrium $P=(1,0,0)$. 

To proceed, we claim that $t_{*}=-\infty$ and $t^{*} = + \infty$. The first claim follows, reasoning as in the compact case. To prove that $t^* = + \infty$, we observe that, since $R_{rr}>0$, we have that $\dot{x}<0$ everywhere. Combining this with the fact that $\lim_{t \rightarrow -\infty} x(t) = 1$, we deduce that $\om' = x < 1$ everywhere. In particular, for every given $r_0 \in (r_{*}, + \infty)$, we have that $\om (r)< \om(r_0) + r-r_0$. Recalling that 
$$
t^{*} \,=\, t(r_{0})+\lim_{r\rightarrow +\infty} \int_{r_{0}}^{r} \frac{ds}{\om(s)} \,,
$$
and using the latter inequality for $\om$, it is immediate to see that the integral on the right hand side must diverge and the claim is proved. 

Another consequence of the fact that $\dot{x}<0$ everywhere is that $x$ must be strictly positive for all times. Indeed, if we assume that $x(t_0) = 0$ for some $t_0 \in \RR$, then, by the fact that $\dot{x}$ is strictly negative, we have that there exist $\ep>0$ and $t_{1}=t_{1}(\ep)\in\RR$ such that $x(t)<-\ep$ for every $t>t_{1}$. This would imply that $\om'(r)<-\ep$ for every $r>r_{1}=r(t_{1})$. Since $\om$ is defined for all $r\in[r_{*},+\infty)$, the latter condition would force $\om$ to become negative, which is geometrically unacceptable. Hence $0<x(t)<1$ for every $t$, in $\RR$.


Before starting the discussion of the steady soliton case, we observe that the sectional curvature of a rotationally symmetric metric as in~\eqref{eqwarp} are given by
$$
K_{rad} = -\frac{\,\om''}{\om}=-\frac{\dot{x}}{\om^2} \quad\quad\hbox{and}\quad\quad K_{sph} = \frac{1-(\om')^{2}}{\om^{2}} = \frac{1-x^{2}}{\om^{2}}\,,
$$
where $K_{rad}$ and $K_{spy}$ are the sectional curvatures of planes containing or perpendicular to the radial vector, respectively. Hence, a solution to the system~\eqref{systemdec} has positive sectional curvature if and only if $\dot{x}<0$ and $-1<x<1$, which is always the case, when $R_{rr}>0$. 

In the following theorem we classify solutions of the system with $R_{rr}>0$ for every $r\in(r_{*},r^{*})$ and $\lambda=0$.

\begin{teo} 
\label{teo-warp-steady}
If $\rho<1/2(n-1)$ or $\rho\geq 1/(n-1)$, $n\geq 3$, then, up to homotheties, there exists a unique complete, noncompact, gradient steady $\rho$--Einstein soliton which is a warped product with canonical fibers as in~\eqref{eqwarp} with $R_{rr}(r_{0})>0$ for some $r_{0}\in(r_{*},r^{*})$. This solution is rotationally symmetric and has positive sectional curvature.

If $1/2(n-1)\leq \rho<1/(n-1)$, then there are no complete, noncompact, gradient steady $\rho$--Einstein solitons which are warped products with canonical fibers as in~\eqref{eqwarp} with $R_{rr}(r_{0})>0$ for some $r_{0}\in(r_{*},r^{*})$.\end{teo}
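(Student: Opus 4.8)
The plan is to exploit the fact that, in the steady case $\lambda=0$, the first two equations of the first--order system \eqref{system} no longer contain $\om$, so they reduce to an autonomous planar system in the phase variables $(x,y)$ (the decoupled system \eqref{systemdec}), while $\om$ is recovered a posteriori by the quadrature $\dot\om=x\om$. The free multiplicative constant in this quadrature is precisely a homothety factor, so two solitons share the same $(x,y)$--orbit if and only if they are homothetic. By the discussion preceding the statement, every complete noncompact warped soliton with $R_{rr}>0$ corresponds to an orbit that leaves the equilibrium $P=(1,0)$ as $t\to-\infty$, stays in the strip $\{0<x<1\}$ and has $\dot x<0$ throughout (positive radial curvature). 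Hence the entire problem becomes the qualitative study, in the $(x,y)$--plane, of the orbit emanating from $P$.

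First I would pin down the local picture at $P$. A direct computation shows that, for $\rho\neq 1/2(n-1)$, the only equilibria of the planar system are $P=(1,0)$ and $Q=(-1,0)$ (setting both right--hand sides to zero and eliminating $xy$ leaves the factor $(1-2(n-1)\rho)^2$, which forces $x^2=1$). Linearizing at $P$, one finds that the Jacobian has determinant $-2(n-2)<0$ and trace $4-n$, so its eigenvalues are $2$ and $2-n$, \emph{independently of} $\rho$; thus $P$ is a saddle with a one--dimensional unstable manifold, tangent to the eigenvector $\big(1,-2(n-1)(1-n\rho)\big)$. The branch of this unstable manifold that points into $\{x<1\}$ is the \emph{unique} candidate orbit, which immediately yields the asserted uniqueness up to homotheties. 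The degenerate endpoint $\rho=1/2(n-1)$ must be treated apart: there $1-2(n-1)\rho=0$, the reduction to \eqref{system} breaks down, and instead one invokes \eqref{equ2}, which gives $\Ric(\nabla f,\cdot)\equiv 0$; since positive sectional curvature forces $\Ric$ to be positive definite, this is possible only if $\nabla f\equiv 0$, so there is no nontrivial such soliton, in agreement with the nonexistence claim.

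The core of the proof is then the \emph{global} continuation of this unique orbit and the decision of whether it produces a complete, positively curved soliton. I would set up invariant regions inside the strip $\{0<x<1\}$, bounded by the coordinate axes and by the two nullclines $\{\dot x=0\}=\{xy=(n-2)(1-(n-1)\rho)(1-x^2)\}$ and $\{\dot y=0\}$, whose mutual position is governed by the sign of $1-(n-1)\rho$ (threshold $\rho=1/(n-1)$) and of $1-2(n-1)\rho$ (threshold $\rho=1/2(n-1)$). The goal is to show that for $\rho<1/2(n-1)$ or $\rho\geq 1/(n-1)$ the orbit is trapped, keeps $\dot x<0$, and converges as $t\to+\infty$ to the $y$--axis (an asymptotically cylindrical, cigar--type end when $\rho=1/(n-1)$) or to a flat/cylindrical end otherwise; in particular $t^{*}=+\infty$, $\om\to+\infty$ and completeness hold, and the sectional curvature stays positive. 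Conversely, for $1/2(n-1)<\rho<1/(n-1)$ I would show that the same orbit is forced out of the admissible set: it either meets the $\dot x$--nullcline, where $\dot x$ changes sign and the radial curvature $R_{rr}$ ceases to be positive, or it reaches $x=0$ in finite $t$, where $\om$ vanishes and a second pole appears, contradicting noncompactness and completeness.

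The main obstacle is exactly this last, global step. The saddle data at $P$ fix the orbit locally but say nothing about its eventual fate, so the argument hinges on producing explicit barrier curves that are invariant under the flow and on a careful asymptotic analysis at the far end $t\to+\infty$. It is precisely in this analysis that the ordering $1/2(n-1)<1/n<1/(n-1)$ of the three natural thresholds separates the trapping (existence) regimes from the escape (nonexistence) regime, and controlling the borderline behavior near $\rho=1/(n-1)$ --- where the $\dot x$--nullcline degenerates onto the coordinate axes and the orbit limits to a cylinder --- will require the most care.
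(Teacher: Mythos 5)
Your reduction to the autonomous planar system and your linearization at $P=(1,0)$ are correct: for $\rho\neq 1/2(n-1)$ the Jacobian indeed has determinant $-2(n-2)$ and trace $4-n$ (the factors of $1-2(n-1)\rho$ cancel exactly), so the eigenvalues are $2$ and $2-n$ independently of $\rho$, $P$ is a hyperbolic saddle, and any admissible soliton orbit must lie on the one--dimensional unstable branch entering $\{x<1\}$, tangent to $\big(1,-2(n-1)(1-n\rho)\big)$. This is a genuinely different, and cleaner, route to \emph{uniqueness} than the paper's, which never linearizes: there, the orbit is characterized as the graph of a solution $x=x(y)$ (resp.\ $x=x(z)$, $z=-y$) of the singular ODE \eqref{ode} (resp.\ \eqref{odez}) with $x\to 1$ as the parameter tends to $0$, and is produced as a monotone limit of solutions $x_{\ep}$ of regularized initial value problems, trapped between the explicit $\dot x$--nullcline barrier $h$ (resp.\ $k$) and $1\pm\ep$, with compactness from Ascoli--Arzel\`a. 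Your treatment of the degenerate value $\rho=1/2(n-1)$ via \eqref{equ2} also matches the paper's Case 3.

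The genuine gap is that the $\rho$--dependent dichotomy --- existence for $\rho<1/2(n-1)$ or $\rho\geq 1/(n-1)$, nonexistence for $1/2(n-1)\leq\rho<1/(n-1)$ --- is exactly the part you leave as a plan, and it is essentially the entire content of the paper's proof. The saddle structure at $P$ is $\rho$--independent, so it cannot see this dichotomy; everything is decided by the global fate of the unstable branch, and your proposal neither exhibits the invariant regions nor carries out the asymptotic analysis. Concretely, the paper first uses $\dot x<0$ to fix the sign and monotonicity of $y$ (for $\rho<1/2(n-1)$ one gets $xy>(n-2)(1-(n-1)\rho)(1-x^2)$, hence $\dot y>0$ and $y>0$; for $\rho\geq 1/(n-1)$ the inequality and signs reverse), which is what allows the orbit to be written as a graph over $y$ and trapped by the nullcline; and the nonexistence range requires three separate arguments (Cases 4--6 of the paper), e.g.\ for $1/n<\rho<1/(n-1)$ a uniform negative upper bound on $\dot x$ forcing $x$ to hit $0$ in finite time, and for $\rho=1/n$ a pure sign analysis of $\dot y=-(n-2)xy$. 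Two of your stated expectations for this step are also inaccurate: reaching $x=0$ does not make $\om$ vanish (at $x=0$ the warping function has a critical point; the contradiction is directly with the constraint $0<x<1$ forced by $\dot x<0$ and completeness, not with a ``second pole''), and in the existence regime one does not always have $\om\to+\infty$ --- for $\rho=1/(n-1)$ the solution is asymptotically cylindrical with $\om$ bounded. As written, the proposal establishes uniqueness-given-existence but proves neither existence nor nonexistence.
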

\begin{proof} From the previous discussion, we have that: $\kappa=1$ (rotational symmetry), $(t_{*},t^{*})=\RR$, $(x(t),y(t),\om(t))\rightarrow P=(1,0,0)$ as $t\rightarrow -\infty$, $\dot{x}<0$ and $0<x<1$. Moreover, since $\lambda=0$, the system~\eqref{system}, reduces to the decoupled one
\begin{equation}\label{systemdec}
\begin{cases} 
(1-2m\rho)\,\dot{x} = (m-1)(1-m\rho)(1-x^{2}) -xy \\
(1-2m\rho) \,\dot{y} = -m(m-1)(1-(m+1)\rho)(1-x^{2})+(1+m-4m\rho)xy\\
\hspace{1.69cm}\dot{\om} = x\,\om\,.
\end{cases} 
\end{equation}

{\bf Case 1:} $\rho<1/2(n-1)=1/2m$. Since the system is decoupled, it is sufficient to consider the first two equations and exhibit an admissible trajectory $t \mapsto (x(t),y(t))$ defined for all $t \in \RR$ such that  $\lim_{t \rightarrow - \infty} (x(t),y(t))= (1,0)$. In the next, we are going to determine the support of such a trajectory. We start by observing that, since $\dot{x}<0$, one has
$$
(m-1)(1-m\rho)(1-x^{2})<xy\,.
$$
This implies at once that $\dot{y}>0$ for every $t\in\RR$. The fact that the solution we are looking for must come out of $(1,0)$ implies that, $y>0$. Taking advantage of these facts, we are going to consider $x$ as a function of $y$, with a small abuse of notations. It is now clear that the support of the admissible trajectory will coincide with the graph of a solution $x=x(y)$ of the ordinary differential equation
\begin{equation}\label{ode}
\frac{dx}{dy} \, = \, F(x(y), y) \, = \, \frac{(m-1)(1-m\rho)(1-x^{2}) -xy}{-m(m-1)(1-(m+1)\rho)(1-x^{2})+(1+m-4m\rho)xy} 
\end{equation}
defined for $y>0$ and such that $\lim_{y \rightarrow 0} x(y) = 1$. We prove the existence of such a solution, by taking the limit as $\ep \rightarrow 0$ of the family of solutions $x_{\ep}$, $\ep \in (0,1)$, to the following initial value problems
\begin{equation*}
\begin{cases} 
\,\,\displaystyle{\frac{dx_{\ep}}{dy}} \, = \, F(x_{\ep}(y), y) \, ,  \quad\quad {y \in \RR^{+}} \\
 x_{\ep}(0) = 1+ \ep\,.
\end{cases} 
\end{equation*}
We claim that for every $\ep \in (0,1)$, the function $x_{\ep}$ is defined for all $y\in \RR^{+}$, is monotonically decreasing and verifies the inequalities
$$
h(y) \, \leq \, x_{\ep} (y) \, \leq \, 1+\ep  \, ,
$$
for every $y \in \RR^{+}$, where the lower bound $h=h(y)$ is defined by
$$
h(y) \, = \, \frac{-y + \sqrt{y^{2} + 4(m-1)^{2}(1-m\rho)^{2}}}{2(m-1)(1-m\rho)} \,.
$$
We notice that $F(h(y), y) = 0$ and $dh/dy<0$ for every $y \in \RR^{+}$. Hence, every solution to equation~\eqref{ode} which is bigger than $1=h(0)$ at $y=0$ always stays bigger than $h(y)$ for every $y \in \RR^{+}$ where the solution exists. On the other hand, $(x,y)\mapsto F(x,y)$ is smooth and negative in the region $\{(x,y)\,\,|\,\,y>0 \hbox{ and } x\geq h(y)\}$. Combining these observations, the claim follows by standard ODE's theory. Moreover, it is easy to observe that, if $0<\ep_{1}<\ep_{2}<1$, then $x_{\ep_{1}}(y)<x_{\ep_{2}}(y)$ for  every $y \in \RR^{+}$. As a consequence of the claim, it is well defined the pointwise limit
$$
\overline{x}(y)\,=\,\lim_{\ep\rightarrow 0} x_{\ep}(y)\,,
$$ 
and $h(y)\leq \overline{x}(y)\leq 1$, for every $y \in \RR^{+}$. We want to prove that $\overline{x}(y)$ solves equation~\eqref{ode} in $\RR^{+}$. To do that we consider an exhaustion 
$[1/j,j]\subset \RR^{+}$, $j\in\NN$, and the associated family of compact sets $K_{j}=\{(x,y)\,\,|\,\,1/j\leq y\leq j \hbox{ and } h(y)\leq x \leq 2 \}\subset \RR^{2}$. Since, for every $j\in\NN$, $F\in C^{\infty}(K_{j})$ it is immediate to observe that $\Vert x_{\ep}\Vert_{C^{2}(K_{j})} \leq C_{j}$, for some positive constant $C_{j}$ independent of $\ep$. By Ascoli--Arzel\`a theorem, we have that $\overline{x}\in C^{1}(K_{j})$ for every $j\in\NN$. Hence, $\overline{x}(y)$ solves equation~\eqref{ode} in $\RR^{+}$ and for what we have seen, $\lim_{y \rightarrow 0} \overline{x}(y) = 1$.

To conclude,  we observe that since, $\dot{x}<0$ and $x>0$, then this solution has positive sectional curvature. 

{\bf Case 2:} $\rho\geq 1/(n-1)=1/m$. Reasoning as in the previous case, we are going to determine the support of an admissible trajectory $t \mapsto (x(t),y(t))$ defined for all $t \in \RR$ such that $\lim_{t \rightarrow - \infty} (x(t),y(t))= (1,0)$. We start by observing that, since $\dot{x}<0$, one has
$$
(m-1)(1-m\rho)(1-x^{2})>xy\,.
$$
This implies at once $\dot{y}<0$ for every $t\in\RR$. Hence $y<0$. As before, regarding $x$ as a function of $y$, we prove the existence of a solution $x=x(y)$ of the equation~\eqref{ode} on $y<0$ and such that $\lim_{y \rightarrow 0} x(y) = 1$. Setting $z=-y$, this is equivalent to prove the existence of a solution $x=x(z)$ to
\begin{equation}\label{odez}
\frac{dx}{dz} \, =\, G(x(z),z) \,=\, \frac{(m-1)(1-m\rho)(1-x^{2}) +xz}{m(m-1)(1-(m+1)\rho)(1-x^{2})+(1+m-4m\rho)xz}\,,
\end{equation}
defined on $z\in\RR^{+}$. We prove the existence of such a solution, by taking the limit as $\ep \rightarrow 0$ of the family of solutions $x_{\ep}$, $\ep \in (0,1)$, to the following initial value problems
\begin{equation*}
\begin{cases} 
\,\,\displaystyle{\frac{dx_{\ep}}{dz}} \, = \, G(x_{\ep}(z), z) \, ,  \quad\quad {z \in \RR^{+}} \\
 x_{\ep}(0) = 1 - \ep\,.
\end{cases} 
\end{equation*}
From now on we consider the case $\rho>1/m$. We claim that for every $\ep \in (0,1)$, the function $x_{\ep}$ is defined for all $z\in \RR^{+}$, and it is such that $x_{\ep}\leq 1$ for every $z\in\RR^{+}$. Moreover, there exists $z_{\ep}>0$ such that $k(z)\leq x_{\ep}(z)$ for every $z\geq z_{\ep}$, where the lower bound $k=k(z)$ is defined by
$$
k(z) \, = \, -\frac{z + \sqrt{z^{2} + 4(m-1)^{2}(1-m\rho)^{2}}}{2(m-1)(1-m\rho)} \,.
$$
We notice that $G(k(z), z) = 0$ and $dk/dz<0$ for every $z \in \RR^{+}$. Hence, if there exists $z_{\ep}$ such that $x(z_{\ep})=k(z_{\ep})$, then $x(z)\geq k(z)$ for every $z\geq z_{\ep}$. On the other hand, it is easy to observe that such a $z_{\ep}$ exists. In fact, if not, we would have a strictly increasing function, $x_{\ep}(z)$, which never crosses $k(z)$, but this contradicts the fact that $k$ tends to zero, as $z\rightarrow +\infty$. In particular, $x_{\ep}$ is strictly increasing before $z_{\ep}$, it has a maximum in $z_{\ep}$ and is strictly decreasing after $z_{\ep}$. Hence, $x_{\ep}(z)\leq x_{\ep}(z_{\ep})= k(z_{\ep})\leq 1$. Finally, as $(x,z)\mapsto G(x,z)$ is smooth in the region $(0,1)\times\RR^{+}$, the solution $x_{\ep}$ exists for every $z\in\RR^{+}$ and the claim follows. Moreover, as a consequence of standard ODE's comparison principle, it is easy to observe that, if $0<\ep_{1}<\ep_{2}<1$, then $x_{\ep_{1}}(z)>x_{\ep_{2}}(z)$ for  every $z \in \RR^{+}$. From what we have seen, it is now well defined the pointwise limit
$$
\hat{x}(z)\,=\,\lim_{\ep\rightarrow 0} x_{\ep}(z)\,,
$$ 
and $k(z)\leq \hat{x}(z)\leq 1$, for every $z \in \RR^{+}$, since $z_{\ep}\rightarrow 0$, as $\ep\rightarrow 0$. Adapting the arguments at the end of the previous case, it is immediate to prove the convergence in $C^{1}$--norm of the $x_{\ep}$'s to $\hat{x}$, which is now a solution of the equation~\eqref{odez} in $\RR^{+}$ with $\lim_{z \rightarrow 0} \hat{x}(z) = 1$.

The case $\rho=1/m$ can be treated in the same way and it is left to the reader. The main difference consists in the definition of the function $k$, namely one has to set $k(z)=0$ if $z>0$ and $k(0)=1$.   

To conclude,  we observe that since, $\dot{x}<0$ and $x>0$, then this solution has positive sectional curvature. 

{\bf Case 3:} $\rho=1/2(n-1)=1/2m$. In this case there are no solutions to the system~\eqref{systemdec} with $R_{rr}>0$, since the general identity~\eqref{equ2} implies at once $R_{rr}=0$ everywhere.

{\bf Case 4:} $1/2m=1/2(n-1)<\rho<1/n=1/(m+1)$. We start by observing that, since $\dot{x}<0$ and $\lim_{t \rightarrow - \infty} (x(t),y(t))= (1,0)$, one has
$$
(m-1)(1-m\rho)(1-x^{2})>xy\,.
$$
Notice that, $y\equiv 0$ is not admissible, since it would imply $x\equiv 1$ which contradicts $\dot{x}<0$. First of all we observe that in the region $\{0<x\leq1\}\cap\{y<0\}$ we have that $\dot{y}>0$. Hence, $y$ must be positive 
since $\lim_{t \rightarrow - \infty} (x(t),y(t))= (1,0)$. This limit also implies that there exists $t_{0}$ such that $\dot{y}(t_{0})>0$. We claim that $\dot{y}(t)\geq 0$ for every $t>t_{0}$. In fact, if $t_{1}>t_{0}$ is such that $\dot{y}(t_{1})=0$, then it is immediate to verify that the tangent vector of our trajectory at $t_{1}$, namely $(\dot{x}(t_{1}),0)$, is pointing inside the set $\{\dot{y}\geq 0\}$. Hence, $y>0$ and $\dot{y}\geq 0$ for every $t\geq t_{0}$. We claim that $x(t)$ cannot stay strictly positive for all times. In fact, if this happen the only possibility is that $\dot{x}\rightarrow 0$, $\dot{y}\rightarrow 0$, $x\rightarrow 0$ and $y\rightarrow +\infty$, as $t\rightarrow +\infty$. System~\eqref{systemdec} implies that the quantity $xy$ would tend to both $(m-1)(1-m\rho)$ and $-m(m-1)(1-(m+1)\rho)/(1+m-4m\rho)$, which is impossible since $1/2m<\rho < 1/(m+1)$. The claim is then proven. On the other hand, we have seen that if $R_{rr}>0$, then $0<x(t)<1$ for all $t\in \RR$. Thus we have reached a contradiction.


{\bf Case 5:} $\rho=1/n=1/(m+1)$. In this case one has $\dot{y}=-(m-1)xy$. We first observe that the solution $y\equiv 0$ is not admissible, since the manifold would be a noncompact space form with positive curvature. We claim that $y$ has a sign. In fact, if it is not the case, there exists $t_{0}$ such that $y(t_{0})=0$ and consequently $-\infty<t_{1}<t_{0}$ such that $\dot{y}(t_{1})=0$ and $y(t_{1})\neq 0$, which is impossible. On the other hand, $y$ and $\dot{y}$ must have the opposite sign and this contradicts the fact that $\lim_{t \rightarrow - \infty} (x(t),y(t))= (1,0)$. 

{\bf Case 6:} $1/(m+1)=1/n<\rho<1/(n-1)=1/m$. We start by observing that, since $\dot{x}<0$ and $\lim_{t \rightarrow - \infty} (x(t),y(t))= (1,0)$, one has
$$
(m-1)(1-m\rho)(1-x^{2})>xy\,.
$$
Again, $y\equiv 0$ is not admissible, since it would imply $x\equiv 1$ which contradicts $\dot{x}<0$. We claim that $y$ has a sign. In fact, if it is not the case, there exists $t_{0}$ such that $y(t_{0})=0$ and consequently $-\infty<t_{1}<t_{0}$ such that $\dot{y}(t_{1})=0$, which is impossible, since $\{\dot{y}\geq 0\}\cap\{\dot{x}<0\}\cap\{0<x<1\}=\emptyset$. Moreover, $\dot{x}<0$, implies at once that $y<0$ and $\dot{y}<0$ for every $t\in\RR$. In particular, one has
$$
\dot{x}\leq \frac{(m-1)(1-m\rho)}{(1-2m\rho)}(1-x^{2})\,,
$$
whenever $x\geq 0$. For a given $0<\ep<1$, we fix $t_{0}=t_{0}(\ep)$, such that $0<x(t_{0})=1-\ep$. This implies $x(t)<1-\ep$ and consequently 
$$
\dot{x}(t)< \frac{(m-1)(1-m\rho)\,\ep}{(1-2m\rho)}<0\,,
$$
for every $t>t_{0}$ and such that $x(t)\geq 0$. Since $t^{*}=+\infty$, we infer the existence of $t_{1}\in\RR$ such that $x(t_{1})=0$, which is unacceptable, since $0<x(t)<1$, for all $t \in \RR$.
\end{proof}

Combining Theorem~\ref{teo-warp-steady} with Theorem~\ref{teo-rot}, we obtain the following corollary.
\begin{cor}
\label{cor-rot} Up to homotheties, there is only one complete three--dimensional gradient steady $\rho$--Einstein soliton with $\rho<0$ or $\rho\geq 1/2$ and positive sectional curvature, namely the rotationally symmetric one constructed in Theorem~\ref{teo-warp-steady}.
\end{cor}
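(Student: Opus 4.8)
The plan is to derive Corollary~\ref{cor-rot} as a direct combination of the rotational symmetry result in Theorem~\ref{teo-rot} and the classification of warped product steady solitons in Theorem~\ref{teo-warp-steady}. Existence is already contained in the first part of Theorem~\ref{teo-warp-steady}, which produces a rotationally symmetric warped product steady $\rho$--Einstein soliton with positive sectional curvature; hence the real content to establish is uniqueness, namely that \emph{every} complete three--dimensional gradient steady $\rho$--Einstein soliton with $\rho < 0$ or $\rho \geq 1/2$ and positive sectional curvature coincides, up to homotheties, with that model.

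First I would apply Theorem~\ref{teo-rot}. In the steady case $\lambda = 0$, so the assumption $\rho < 0$ realizes the admissible pair $\rho < 0$, $\lambda \leq 0$, and the assumption $\rho \geq 1/2$ realizes $\rho \geq 1/2$, $\lambda \geq 0$; thus the sign conditions required by Theorem~\ref{teo-rot} hold in both cases. Since $(M^3, g)$ has positive sectional curvature, Theorem~\ref{teo-rot} gives that the soliton is rotationally symmetric. I would also extract from its proof the extra structural information that strict convexity of $f$ forces $M^3$ to be diffeomorphic to $\RR^3$, hence complete and noncompact with a single critical point $O$ of $f$, and that away from $O$ the metric is the warped product $g = dr \otimes dr + \om(r)^2 g_{\SS^{2}}$ with canonical spherical fibers ($\kappa = 1$), exactly in the form~\eqref{eqwarp}.

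Next I would feed this structure into Theorem~\ref{teo-warp-steady}. The only additional input needed is the sign of the radial Ricci curvature, which follows from the warped product expression for $\Ric$ in Section~\ref{sect-warped}: one has $R_{rr} = -(n-1)\om''/\om = (n-1) K_{rad}$, so positive sectional curvature ($K_{rad} > 0$) yields $R_{rr} > 0$ throughout, in particular $R_{rr}(r_0) > 0$ for some $r_0$. For $n = 3$ one has $1/2(n-1) = 1/4$ and $1/(n-1) = 1/2$, so $\rho < 0$ implies $\rho < 1/2(n-1)$ and $\rho \geq 1/2$ means $\rho \geq 1/(n-1)$; in either case the hypotheses land precisely in the existence--and--uniqueness regime of Theorem~\ref{teo-warp-steady}. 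That theorem then identifies $(M^3, g)$, up to homotheties, with the unique rotationally symmetric soliton it constructs, which completes the argument.

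Since every analytic difficulty is already absorbed into the two earlier theorems, I expect the main obstacle to be purely one of hypothesis bookkeeping: one must check that both $\rho$--ranges of the corollary lie simultaneously inside the admissible ranges of Theorem~\ref{teo-rot} and of Theorem~\ref{teo-warp-steady}, and in particular that the intermediate band $1/2(n-1) \leq \rho < 1/(n-1)$ --- where Theorem~\ref{teo-warp-steady} instead gives nonexistence --- is excluded. This is precisely why the corollary is restricted to $\rho < 0$ or $\rho \geq 1/2$. A secondary point to verify is that rotational symmetry delivers a warped product with constant curvature fibers of the exact normalization $g_{\SS^{2}}$ required by~\eqref{eqwarp}; this is already supplied by the final lines of the proof of Theorem~\ref{teo-rot}, so no further work is needed there.
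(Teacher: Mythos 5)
Your proposal is correct and follows essentially the same route as the paper, whose proof of this corollary is precisely the one-line combination of Theorem~\ref{teo-rot} (rotational symmetry, hence the warped product form~\eqref{eqwarp} with spherical fibers) and Theorem~\ref{teo-warp-steady} (existence and uniqueness among such warped products with $R_{rr}>0$). The bookkeeping you carry out explicitly --- that $\lambda=0$ satisfies the sign hypotheses of Theorem~\ref{teo-rot}, that positive sectional curvature gives $R_{rr}=(n-1)K_{rad}>0$, and that for $n=3$ the ranges $\rho<0$ and $\rho\geq 1/2$ fall respectively in the regimes $\rho<1/2(n-1)=1/4$ and $\rho\geq 1/(n-1)=1/2$ of Theorem~\ref{teo-warp-steady} --- is exactly the implicit content of the paper's argument.
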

Combining Theorem~\ref{teo-warp-steady} with Theorem~\ref{teo-lcf},
we obtain the following corollary, which gives the classification of complete $n$--dimensional locally conformally flat gradient steady $\rho$--Einstein soliton with positive sectional curvature and $\rho\in \RR\setminus[0,1/2)$.
\begin{cor}
\label{cor-lcf} Up to homotheties, there is only one complete $n$--dimensional locally conformally flat gradient steady $\rho$--Einstein soliton with $\rho<0$ or $\rho \geq 1/2$ and positive sectional curvature, namely the rotationally symmetric one constructed in Theorem~\ref{teo-warp-steady}. 

%
\end{cor}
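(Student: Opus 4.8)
The plan is to read Corollary~\ref{cor-lcf} as the conjunction of the rigidity result in Theorem~\ref{teo-lcf} with the classification of warped product solitons in Theorem~\ref{teo-warp-steady}, so that the whole argument reduces to checking that a soliton of the corollary meets the hypotheses of both statements. Throughout, steadiness means $\lambda = 0$, so in particular the two sign conditions $\lambda \leq 0$ and $\lambda \geq 0$ appearing in Theorem~\ref{teo-lcf} are simultaneously satisfied; hence for $n \geq 4$ the hypotheses ``$\rho < 0$ and $\lambda \leq 0$, or $\rho \geq 1/2$ and $\lambda \geq 0$'' reduce exactly to ``$\rho < 0$ or $\rho \geq 1/2$''. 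Applying Theorem~\ref{teo-lcf}, I obtain that the soliton is rotationally symmetric, i.e. $g = dr \otimes dr + \om(r)^2 g_{\SS^{n-1}}$ on $M^n \setminus \{O\}$, which is precisely a warped product with canonical fibers as in~\eqref{eqwarp} with $\k = 1$. The three--dimensional case $n = 3$ is already covered by Corollary~\ref{cor-rot}, so I only need to treat $n \geq 4$.

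Next I would check that this warped product satisfies the remaining hypotheses of Theorem~\ref{teo-warp-steady}, namely that it is noncompact and that $R_{rr}(r_0) > 0$ for some $r_0$. The noncompactness is already built into the proof of Theorem~\ref{teo-lcf}: positive sectional curvature together with local conformal flatness forces the Einstein tensor $\Ric - \tfrac{1}{2} R g$ to be negative definite, whence $f$ is strictly convex and $M^n$ is diffeomorphic to $\RR^n$. For the curvature condition, I recall that for a rotationally symmetric metric one has $R_{rr} = -(n-1)\,\om''/\om$ and $K_{rad} = -\om''/\om$; since the soliton has positive sectional curvature, $\om'' < 0$, and therefore $R_{rr} > 0$ everywhere. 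Thus the soliton is a complete, noncompact, gradient steady $\rho$--Einstein soliton which is a warped product with canonical fibers and satisfies $R_{rr} > 0$.

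Finally I would match the range of $\rho$ against the trichotomy in Theorem~\ref{teo-warp-steady}. Since $1/2(n-1) > 0$, the case $\rho < 0$ gives $\rho < 1/2(n-1)$; and since $n \geq 3$ implies $1/(n-1) \leq 1/2$, the case $\rho \geq 1/2$ gives $\rho \geq 1/(n-1)$. In both situations $\rho$ lies in the existence-and-uniqueness regime of Theorem~\ref{teo-warp-steady} and avoids the non-existence interval $[1/2(n-1),1/(n-1))$. Consequently, up to homotheties there is exactly one such soliton, namely the rotationally symmetric one constructed in Theorem~\ref{teo-warp-steady}, which proves the corollary.

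The only genuinely delicate point is this bookkeeping of the parameter ranges, ensuring that ``$\rho < 0$ or $\rho \geq 1/2$'' always lands in the existence branch and never in the forbidden window $[1/2(n-1),1/(n-1))$, together with the verification that positive sectional curvature upgrades to the hypothesis $R_{rr} > 0$ needed to invoke the warped product classification; once these are settled, the two theorems fit together with no further work.
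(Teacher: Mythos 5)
Your proposal is correct and follows essentially the same route as the paper, which obtains Corollary~\ref{cor-lcf} precisely by combining Theorem~\ref{teo-lcf} (rotational symmetry) with the uniqueness statement of Theorem~\ref{teo-warp-steady}. The details you fill in—that steadiness collapses the sign conditions on $\lambda$, that positive sectional curvature yields $R_{rr}=(n-1)K_{rad}>0$, and that $\rho<0$ or $\rho\geq 1/2$ lands in the existence range $\rho<1/2(n-1)$ or $\rho\geq 1/(n-1)$—are exactly the verifications the paper leaves implicit.
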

The last part of this section is devoted to the study of the asymptotic behavior of the gradient steady $\rho$--Einstein solitons constructed in Theorem~\ref{teo-warp-steady}, for $\rho < 1/2(n-1)$ and $\rho \geq 1/(n-1)$. To simplify the notations, we agree that, given two positive function $u(r)$ and $v(r)$, we have that $u = \mathcal{O} (v)$, for $r \rightarrow + \infty$, if and only if there exists two positive constants $A,B$ and $r_0$ such that, for every $r>r_0$,
$$
A \, v(r) < u(r)< B \, v(r) \, .
$$ 
We are now in the position to state the following proposition.
\begin{prop}
\label{prop-asymp}
Let $(M^n, g)$, $n\geq 3$, be the rotationally symmetric gradient steady $\rho$--Einstein soliton with positive sectional curvatures and normalized to have $R(O)=1$, constructed in Theorem~\ref{teo-warp-steady} for $\rho < 1/2(n-1)$ and $\rho \geq 1/(n-1)$. Then, with the notations of formula~\eqref{eqwarp}, we have that, as $r \rightarrow +\infty$,
$$
\omega(r) \, = \, \mathcal{O} \big(r^{\,\frac{1-(n-1)\rho}{2-3(n-1)\rho}} \big) \quad \quad \hbox{and} \quad \quad  |f(r)| \, = \, \mathcal{O} \big(r^{\,\frac{2-4(n-1)\rho}{2-3(n-1)\rho}} \big)  \,.$$
In particular, we have that $Vol_{g}(B_r(O))  \, = \, \mathcal{O} \big(r^{(n-1)\,\frac{1-(n-1)\rho}{2-3(n-1)\rho} +1 } \big)$, as $r \rightarrow + \infty$, where $B_r(O)$ is the ball of radius $r$ centered at the point $O$.
\end{prop}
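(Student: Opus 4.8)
The plan is to extract the $r$--asymptotics of $\om$ and $f$ from the behaviour of the trajectory $(x(t),y(t),\om(t))$ of the decoupled system~\eqref{systemdec} as $t\to+\infty$, and then to transfer them through the defining relations $\dot\om=x\om$, $dr=\om\,dt$ and $df/dt=\om f'=-y$. I recall from the proof of Theorem~\ref{teo-warp-steady} that on such a soliton $\kappa=1$, $(t_*,t^*)=\RR$, one has $0<x<1$ with $\dot x<0$ throughout, and the trajectory emanates from $P=(1,0,0)$ as $t\to-\infty$; moreover in Case~1 ($\rho<1/2(n-1)$) one has $y>0$, $\dot y>0$, while in Case~2 ($\rho\geq 1/(n-1)$) one has $y<0$, $\dot y<0$. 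Throughout I write $m=n-1$ and set $a=(m-1)(1-m\rho)$ and $\beta=(m-1)(1-2m\rho)$. The normalization $R(O)=1$ only fixes the overall homothety, so it affects the constants $A,B$ in the definition of $\mathcal{O}$ but none of the exponents.

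The heart of the matter is the sharp decay rate of $x$. First I would record that $x(t)\to 0$: being monotone and bounded below by $0$, it has a limit $x_\infty$, and $x_\infty>0$ is impossible, since it would force $|xy|\to\infty$ and hence $\dot x\to-\infty$ by the first equation of~\eqref{systemdec}, contradicting the convergence of $x$. A short contradiction argument (a finite limit of $y$ would produce a point $(0,y_\infty)$ at which $\dot x=a/(1-2m\rho)\neq 0$, against $\dot x<0$) then gives $y\to+\infty$ in Case~1 and $y\to-\infty$ in Case~2. The key claim is $xy\to a$. The $x$--nullcline of~\eqref{systemdec} is $\{a(1-x^2)=xy\}$, on which $xy\to a$ as $x\to0$; since the solution is controlled by the nullcline $h$ (respectively $k$, in the variable $z=-y$) of Theorem~\ref{teo-warp-steady} and $y\,h(y)\to a$, this already yields the one-sided estimate for $xy$ coming from the nullcline side. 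For the matching control I would run an attraction (slow-manifold) argument: writing $P=xy$ and using~\eqref{systemdec} one finds, as $x\to0$, $\frac{dP}{dy}=x+y\,\frac{(a-P)-ax^2}{(1+m-4m\rho)P-m(m-1)(1-(m+1)\rho)}$, whose denominator tends to $D(a)=(m-1)(1-2m\rho)^2>0$; hence $P-a$ obeys a linear differential inequality with coefficient $\sim-y/D(a)$, and since $|y|\to\infty$ this pins $P$ to $a$ at super-polynomial rate (Case~2 is identical after passing to $z=-y$). Substituting $xy\to a$ into the second equation gives $\dot y\to\beta$, hence $y=\beta t+\gamma+o(1)$ for some constant $\gamma$, and therefore $t\,x(t)\to a/\beta=:c$ with $c=\frac{1-m\rho}{1-2m\rho}\ge 0$; the fast convergence of $P$ further yields the integrable estimate $x-c/t=\mathcal{O}(t^{-2})$.

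The remaining steps are routine integrations. From $\frac{d}{dt}\log\om=x=c/t+\mathcal{O}(t^{-2})$ and the integrability of the error I obtain $\log\om=c\log t+C+o(1)$, so $\om=\mathcal{O}(t^{c})$. Integrating $dr=\om\,dt$ gives $r=\mathcal{O}(t^{c+1})$, where $c+1=\frac{2-3m\rho}{1-2m\rho}>0$ in both ranges of $\rho$; inverting, $t=\mathcal{O}(r^{1/(c+1)})$ and hence $\om=\mathcal{O}(r^{c/(c+1)})$ with $c/(c+1)=\frac{1-(n-1)\rho}{2-3(n-1)\rho}$, the claimed exponent. Likewise $df/dt=-y\sim-\beta t$ integrates to $|f|=\mathcal{O}(t^2)=\mathcal{O}(r^{2/(c+1)})$ with $2/(c+1)=\frac{2-4(n-1)\rho}{2-3(n-1)\rho}$. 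Finally $Vol_{g}(B_r(O))=Vol(\SS^{n-1})\int_0^r\om(s)^{n-1}\,ds=\mathcal{O}\big(r^{(n-1)c/(c+1)+1}\big)$, which is the stated volume growth; the degenerate value $\rho=1/(n-1)$ gives $c=0$, i.e. $\om=\mathcal{O}(1)$ and linear volume growth, recovering the asymptotically cylindrical (cigar-type) behaviour.

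The one genuinely delicate point is the sharp rate $t\,x(t)\to c$: the nullcline bound alone gives only a one-sided estimate and, by itself, yields the too-weak conclusion $\om=t^{c+o(1)}$ rather than a two-sided $\mathcal{O}(t^{c})$. Promoting this to the integrable rate $x-c/t=\mathcal{O}(t^{-2})$ requires the attraction estimate for $P=xy$ near $a$, which in turn rests on the positivity $D(a)=(m-1)(1-2m\rho)^2>0$ and on $|y|\to\infty$; the only technical care needed is to make the lower-order terms in the expression for $dP/dy$ uniform, so that the linear differential inequality can be integrated and super-polynomial decay of $P-a$ obtained.
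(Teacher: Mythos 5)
Your proposal is correct and follows the same overall strategy as the paper's proof: establish $(x,y)\to(0,\pm\infty)$ as $t\to+\infty$, identify the limits $xy\to(n-2)(1-(n-1)\rho)$ and $\dot y\to(n-2)(1-2(n-1)\rho)$, deduce $t\,x(t)\to c=\frac{1-(n-1)\rho}{1-2(n-1)\rho}$, and then integrate $\dot\om=x\om$, $dr=\om\,dt$, $df/dt=-y$ to transfer the asymptotics to $r$ (note that the paper misprints this last relation as $dr=(1/\om)\,dt$; your $dr=\om\,dt$ is the correct one, and it is what the paper actually uses). Where you genuinely go beyond the paper is the quantitative step: the paper passes directly from $\lim t\,x(t)=c$ to the two-sided bound $\om=\mathcal{O}(t^{c})$, which, with the paper's definition of $\mathcal{O}$ as a two-sided comparison, is not by itself a valid implication (it only yields $\log\om/\log t\to c$, i.e. $\om=t^{c+o(1)}$). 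Your slow-manifold estimate for $P=xy$, whose linearization coefficient you correctly compute to be $D(a)=(n-2)\big(1-2(n-1)\rho\big)^{2}>0$, supplies exactly the integrable error $x-c/t=\mathcal{O}(t^{-2})$ needed to close this gap; it also lets you treat $\rho=1/(n-1)$ (where $c=0$) uniformly, whereas the paper gives a separate Gaussian-decay argument for that value. Two small points to repair in your write-up: first, when $a=(n-2)(1-(n-1)\rho)\neq0$ the decay of $P-a$ is not super-polynomial, since the forcing term $x\sim a/y$ in your equation for $dP/dy$ caps it at $\mathcal{O}(y^{-2})$; but this quadratic rate is precisely what you end up using, so nothing is lost (super-polynomial decay is correct only when $a=0$, i.e. $\rho=1/(n-1)$). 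Second, your opening argument that $x\to0$ invokes $|xy|\to\infty$ before $|y|\to\infty$ has been established; the cleaner route, which is the paper's, is to note that a positive limit of $x$ together with the monotonicity of $y$ would force the trajectory to converge to an equilibrium of the first two equations of~\eqref{systemdec}, and the two resulting expressions for $\lim y$ are compatible only when $\rho=1/2(n-1)$, which is excluded.
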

\begin{proof}
We start by considering the case $\rho<1/2(n-1)$. First of all, we recall that if $t \mapsto (x(t), y(t), \omega(t))$ is the solution to system~\eqref{systemdec} under consideration, then we have $0<x <1$, $\dot x <0$ and $y,\dot y >0$ for every $t\in \RR$. Moreover it is easy to see that 
$$
\lim_{t\rightarrow +\infty} (x(t), y(t)) = (0, + \infty) \,.
$$ 
In fact, since $x$ is monotonically decreasing and bounded, it does have a limit, as $t\rightarrow + \infty$. If this limit is equal to some positive constant $a>0$, then, using the first equation in system~\eqref{systemdec} and the fact that $\dot x\rightarrow 0$, we would have that 
$$
y\rightarrow \frac{(n-2)(1-(n-1)\rho)(1-a^2)}{a} 
$$ 
and $\dot y \rightarrow 0$, as $t\rightarrow +\infty$. Using the second equation in system~\eqref{systemdec}, we would get 
$$
y \rightarrow \frac{(n-1)(n-2)(1-n\rho)(1-a^2)}{(n-4(n-1)\rho) a} \, ,
$$
as $t\rightarrow +\infty$. This would force $\rho = 1/2(n-1)$, which is excluded. 

The condition $\dot x < 0$ implies that
$$
xy > (n-2)(1-(n-1)\rho) (1-x^2) \,.
$$
Hence, $y \rightarrow + \infty$, as $t\rightarrow +\infty$.
By the system~\eqref{systemdec}, we infer that $xy \rightarrow (n-2)(1-(n-1)\rho)\neq 0$ and $\dot y \rightarrow (n-2) (1-2(n-1)\rho)$, as $t\rightarrow +\infty$. This implies that 
$$
\lim_{t \rightarrow + \infty} \frac{y(t)}{t} \, = \, (n-2)(1-2(n-1)\rho) \quad \quad \hbox{and} \quad \quad \lim_{t\rightarrow + \infty} t \, {x(t)} \, = \, \frac{1-(n-1)\rho}{1-2(n-1)\rho} \, .
$$
The equation $\dot \omega = x \omega$ implies that $\omega(t) = \mathcal{O} \big(\, t^{\frac{1-(n-1)\rho}{1-2(n-1)\rho}}\big)$, as $t \rightarrow + \infty$. Moreover, using the relationship $dr = (1/\omega) dt$, it is straightforward to conclude that 
$$
t \, = \, \mathcal{O} \big(\, r^{\frac{1-2(n-1)\rho}{2-3(n-1)\rho}}\big) \quad \quad \hbox{and} \quad \quad \omega \, = \, \mathcal{O} \big(\, r^{\frac{1-(n-1)\rho}{2-3(n-1)\rho}}\big) \, ,
$$
as $r \rightarrow + \infty$. The volume growth estimates contained in the statement are immediate consequences of the asymptotic behavior of $\omega$ described above. Finally, the fact that $y= -\omega f'$, implies at once the desired estimate for $|f(r)|$.

The proof is identical in the case $\rho > 1/(n-1)$ and it is left to the reader. Here, we only discuss the remaining  case, namely $\rho = 1/(n-1)$. Reasoning as before, we get that $xy \rightarrow 0$, as $t\rightarrow +\infty$, thus, we cannot go any further. However, in this case, the first equation in system~\eqref{systemdec} reads $\dot x = xy$, which implies $x= \mathcal{O}(\,e^{-\frac{n-2}{2} t^2})$, since 
$$
\lim_{t \rightarrow + \infty} \frac{y(t)}{t} \, = \, -(n-2) \, . 
$$
Using again the equation $\dot \omega = x \om $, we get 
$$
\om(t) \, = \, \mathcal{O} \bigg(   \exp \Big(  \int_{-\infty}^t e^{-\frac{n-2}{2} \, s^2} \, ds  \Big)  \,\,  \bigg) \,\, = \, \, \mathcal{O} (1) \, ,
$$
as $t\rightarrow +\infty$. In particular $\om(r) = \mathcal{O}(1)$ and $Vol_{g}(B_r(O))$, as $r\rightarrow + \infty$. Using the equation $f'' \om^2 - (n-3) \om \om'' + (n-2) (\om')^2 - (n-2) = 0$ and the fact that $\om' , \om'' \rightarrow 0$, as $r\rightarrow + \infty$, it is easy to deduce the estimates for the asymptotics of $|f(r)|$.    
\end{proof}
To conclude this section, we give some final comments on Theorem~\ref{teo-warp-steady} and Proposition~\ref{prop-asymp}. First, we notice that in the limit for $\rho \rightarrow 0$, the solutions provided in Theorem~\ref{teo-warp-steady} tend to the Bryant soliton metric, whose asymptotic behavior is determined by $\om(r) = \mathcal{O}(r^{1/2})$, $|f(r)| = \mathcal{O}(r)$ and $Vol_{g} (B_{r}(O)) = \mathcal{O}(r^{(n+1)/2})$, as $r \rightarrow + \infty$. 

An interesting feature of the solutions described in Theorem~\ref{teo-warp-steady} is that solutions corresponding to largely negative values of $\rho$ seem to be very close to solutions corresponding to largely positive values of $\rho$. In fact, the formal limit for $\rho \rightarrow \pm \infty$ of the asymptotic behavior is the same and it is given by $\om(r) = \mathcal{O}(r^{1/3})$, $|f(r)| = \mathcal{O}(r^{4/3})$ and $Vol_{g} (B_{r}(O)) = \mathcal{O}(r^{(n+2)/3})$, as $r \rightarrow + \infty$.

Another possible formal limit is the one for $\rho \rightarrow 1/2(n-1)$. In this case, the solutions provided by Theorem~\ref{teo-warp-steady} tend to Schouten solitons, which we are going to discuss in the next section. In particular, the formal limit of the asymptotic behavior is of Euclidean type and it is given by $\om(r) = \mathcal{O}(r)$, $|f(r)| = \mathcal{O}(1)$ and $Vol_{g} (B_{r}(O)) = \mathcal{O}(r^{n})$, as $r \rightarrow + \infty$. This perfectly agrees with the conclusions in Theorem~\ref{teo-ssteady} below.

Among all the solutions constructed in Theorem~\ref{teo-warp-steady}, probably the most significant ones correspond to the value $\rho = 1/(n-1)$. In this case, Proposition~\ref{prop-asymp} implies that $\om(r) = \mathcal{O}(1)$, $|f(r)| = \mathcal{O}(r^2)$ and $Vol_{g} (B_{r}(O)) = \mathcal{O}(r)$, as $r \rightarrow + \infty$. Hence, these solitons have linear volume growth and are asymptotically cylindrical. Moreover, we notice that in dimension $n=2$, the equation for a $1/(n-1)$--Einstein soliton reads
$$
\nabla^2 f = \frac{1}{2} R \,  g \, ,
$$
which, up to change the sign of $f$, coincides with the equation of two--dimensional gradient steady Ricci solitons. In this case, the only complete noncompact solution with positive curvature is the Hamilton's cigar~\cite{hamilton5}, also known as Witten's black hole. For these reasons, it is natural to consider the rotationally symmetric gradient steady $1/(n-1)$--Einstein solitons as the $n$--dimensional generalization of the Hamilton's cigar, hence, we will call them {\em cigar--type solitons}. In dimension $n=3$, it turns out that this solution is an Einstein soliton. Thus, we will refer to it as the {\em Einstein's cigar}. In this special situation, Corollary~\ref{cor-rot} may be rephrased in the following way.
\begin{cor}
\label{cor-e-cigar} Up to homotheties, the only complete three--dimensional gradient steady Einstein soliton with positive sectional curvature is the Einstein's cigar.
\end{cor}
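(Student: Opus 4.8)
The plan is to recognize that this corollary is nothing but the specialization of Corollary~\ref{cor-rot} to the Einstein value $\rho = 1/2$ in dimension $n=3$, so that no genuinely new argument is needed once the earlier results are in hand.

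First, I would note that, by definition, a gradient steady Einstein soliton is precisely a gradient steady $\rho$--Einstein soliton with $\rho = 1/2$ and $\lambda = 0$. The crucial numerical coincidence is that in dimension $n=3$ one has $1/(n-1) = 1/2$, so the value $\rho = 1/2$ simultaneously meets the threshold $\rho \geq 1/2$ appearing in Corollary~\ref{cor-rot} and coincides with the distinguished value $\rho = 1/(n-1)$ singled out in Theorem~\ref{teo-warp-steady}.

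Second, I would invoke Corollary~\ref{cor-rot} directly: it asserts that, up to homotheties, there is exactly one complete three--dimensional gradient steady $\rho$--Einstein soliton with $\rho \geq 1/2$ and positive sectional curvature, namely the rotationally symmetric solution constructed in Theorem~\ref{teo-warp-steady}. Applying this with $\rho = 1/2$ immediately yields the uniqueness statement. Finally, I would identify this unique solution geometrically: for $\rho = 1/(n-1) = 1/2$ the warped product profile of Theorem~\ref{teo-warp-steady} is exactly the cigar--type soliton whose asymptotics are computed in Proposition~\ref{prop-asymp} (namely $\om(r) = \mathcal{O}(1)$, linear volume growth, asymptotically cylindrical), and which in dimension $n=3$ is an Einstein soliton, i.e. the Einstein's cigar.

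There is no real obstacle at this stage; all the analytic difficulty has already been absorbed into the proof of rotational symmetry (Theorem~\ref{teo-rot}) and the ODE shooting and comparison argument establishing existence and uniqueness of the warped--product profile (Theorem~\ref{teo-warp-steady}). The only points demanding a moment of care are the verifications that the hypotheses of those results are actually met: that steadiness $\lambda = 0$ lies in the admissible range $\lambda \geq 0$ of Theorem~\ref{teo-rot}, and that the positive sectional curvature assumption forces $R_{rr} > 0$, so that the relevant branch of Theorem~\ref{teo-warp-steady} applies rather than the cylindrical or flat classification of Theorem~\ref{teo-warped-cyl}.
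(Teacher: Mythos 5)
Your proposal is correct and follows exactly the paper's own route: the authors present this corollary as a rephrasing of Corollary~\ref{cor-rot} (itself the combination of Theorems~\ref{teo-rot} and~\ref{teo-warp-steady}) at the value $\rho = 1/2$, using precisely the coincidence $1/(n-1) = 1/2$ in dimension $n=3$ to identify the unique rotationally symmetric solution with the cigar--type soliton, i.e.\ the Einstein's cigar. Your additional verifications (that $\lambda = 0$ lies in the range $\lambda \geq 0$ of Theorem~\ref{teo-rot}, and that positive sectional curvature gives $R_{rr}>0$ so the correct branch of Theorem~\ref{teo-warp-steady} applies) are exactly the hypothesis checks implicit in the paper's deduction.
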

For $n\geq4$, in the locally conformally flat case, we have the following corollary.
\begin{cor}
\label{cor-cigar} Up to homotheties, the only complete n--dimensional locally conformally flat gradient steady $1/(n-1)$--Einstein soliton with positive sectional curvature is the cigar--type soliton.
\end{cor}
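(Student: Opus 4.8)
The corollary is the specialization to $\rho=1/(n-1)$ of the locally conformally flat classification, so the plan is to run the two--step scheme behind Corollary~\ref{cor-lcf}: first prove that the soliton is rotationally symmetric, then identify it through Theorem~\ref{teo-warp-steady}. The second step is essentially free, since $\rho=1/(n-1)$ lies in the range $\rho\geq 1/(n-1)$ of Theorem~\ref{teo-warp-steady}, whose unique rotationally symmetric solution with positive sectional curvature is exactly the cigar--type soliton (the borderline $\om(r)=\mathcal{O}(1)$ case of Proposition~\ref{prop-asymp}). For $n=3$ one has $\rho=1/2$ and the symmetry is already Theorem~\ref{teo-rot} (this is Corollary~\ref{cor-e-cigar}), so only $n\geq 4$ needs attention, and there the whole content is the rotational symmetry.

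For the symmetry I would reuse the level--set analysis in the proof of Theorem~\ref{teo-lcf}, which is insensitive to the sign or size of $\rho$: by Theorem~\ref{teo-esolrect} the potential is rectifiable, so Proposition~\ref{prop-level} makes $R,|\nabla f|,H,R^{\Sigma}$ depend on the signed distance $r$ alone; local conformal flatness makes the Cotton tensor vanish, and the identity $C_{ijr}=\frac{|\nabla f|^{2}}{n-2}\big(h_{ij}-\frac{H}{n-1}g_{ij}\big)=0$ forces the regular level sets to be totally umbilic of constant, positive sectional curvature, hence round spheres, giving $g=dr\otimes dr+\om(r)^{2}g_{\SS^{n-1}}$ away from the critical set.

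The real obstacle, and the only place where $\rho=1/(n-1)<1/2$ bites, is the \emph{global} input to this argument: that $M^{n}\cong\RR^{n}$ and that $f$ has exactly one critical point. Theorem~\ref{teo-lcf} extracts both from the strict convexity of $f$, i.e.\ from positivity of $\nabla^{2}f=\rho R\,g+\lambda g-\Ric$, which is guaranteed only when $\rho\geq 1/2$ and $\lambda\geq 0$; this shortcut is unavailable here. I would replace it by two softer facts. The topology $M^{n}\cong\RR^{n}$ needs no convexity: positive sectional curvature on a complete noncompact manifold gives it through the Soul theorem. For the critical set I would use that, by~\eqref{equ1}, $\Delta f=\tfrac{1}{n-1}R>0$, together with the two--eigenvalue structure of $\Ric$ coming from the round--sphere level sets (a simple radial eigenvalue and an $(n-1)$--fold transversal one); at a critical point $p$ the direction $\nabla f$ degenerates, so these two eigenvalues must coincide, $\Ric(p)$ is isotropic, and hence $\nabla^{2}f(p)=\tfrac{1}{n-1}R\,g-\Ric(p)=\tfrac{R}{n(n-1)}g$ is \emph{positive definite}. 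Thus every critical point is a nondegenerate local minimum: there can be at most one (two minima would force, by a \emph{mountain pass} argument, a critical point of positive index), and there is at least one (otherwise the spherical foliation would give $M^{n}\cong\SS^{n-1}\times\RR$, or a line would split off by Cheeger--Gromoll, both impossible). With a unique critical point the warped structure extends to all of $M^{n}\setminus\{O\}$, and Theorem~\ref{teo-warp-steady} closes the argument. I expect the delicate points to be precisely the isotropy of $\Ric$ at the critical point and the mountain pass step, which together play the role that strict convexity plays in the range $\rho\geq 1/2$.
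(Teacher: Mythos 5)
Your diagnosis of where the difficulty lies is exactly right, and it is worth stating plainly: for $n\geq 4$ one has $\rho=1/(n-1)<1/2$, so this value of $\rho$ falls outside the hypotheses of Theorem~\ref{teo-lcf} and Corollary~\ref{cor-lcf}, and the paper states Corollary~\ref{cor-cigar} with no further argument on offer; the only route it provides is the specialization of Corollary~\ref{cor-lcf}, which is not licensed here. The convexity shortcut is genuinely unavailable in this range: local conformal flatness plus positive sectional curvature gives $\lambda_i+\lambda_j>R/(n-1)$ for the Ricci eigenvalues, hence only $\lambda_i<R/2$, whereas positivity of $\nabla^2 f=\tfrac{1}{n-1}R\,g-\Ric$ would require the strictly stronger bound $\lambda_i<R/(n-1)$ when $n\geq 4$. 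Your identification step is also correct: $\rho=1/(n-1)$ lies in the range of Theorem~\ref{teo-warp-steady}, and positive sectional curvature forces $R_{rr}>0$, so rotational symmetry would indeed finish the proof. So you have found a real gap rather than a step the paper actually carries out.

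The problem is that your replacement for convexity does not close that gap, and it fails precisely at the two points you flag as delicate. First, the isotropy of $\Ric$ at a critical point does not follow from ``the radial direction degenerates'': if regular points $q_k\to p$, continuity only yields that $\Ric(p)$ has the two--eigenvalue structure with respect to a limit direction $v=\lim \nabla f/|\nabla f|$; nothing forces the two eigenvalues to merge. The round cylinder $\RR\times\SS^{n-1}$ with $f(r)=\tfrac{n-2}{2\om_0^2}\,r^2$, which by Case 1 of Theorem~\ref{teo-warped-cyl} is a complete, locally conformally flat, gradient steady $1/(n-1)$--Einstein soliton, is a counterexample to this reasoning pattern: its critical set is the entire hypersurface $\{r=0\}$, $\Ric$ there has the distinct eigenvalues $0$ and $(n-2)/\om_0^2$, and $\nabla^2 f$ is rank one, not positive definite. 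That example is excluded from the corollary only because its sectional curvature is merely nonnegative; since your argument at the critical point never invokes strict positivity, it cannot be correct as stated. Second, the mountain--pass step needs $f$ to be proper (compact sublevels), which you have not established: on $\RR^n$ there exist smooth non--proper functions whose only critical points are two nondegenerate local minima, so nondegeneracy of the critical points alone does not give uniqueness. Until positivity of the curvature is genuinely injected into the analysis of the critical set, the proposal remains a program rather than a proof; within the paper's actual results the corollary is established only when $1/(n-1)\geq 1/2$, i.e.\ $n=3$, where it is Corollary~\ref{cor-e-cigar}.
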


\

\section{Gradient Schouten solitons}
\label{Schouten solitons}

In this section we classify $n$--dimensional steady and three--dimensional shrinking gradient Schouten solitons. We recall that a gradient  Schouten soliton is a Riemannian manifold $(M^{n},g)$, $n\geq 3$, satisfying
\begin{equation}\label{ssol}
\Ric + \nabla^{2}f = \frac{1}{2(n-1)} R\,g + \lambda g\,,
\end{equation}
for some smooth function $f$ and some constant $\lambda \in \RR$. We start by observing that ancient solutions to the Schouten flow 
\begin{equation}
\label{sflow}
\partial_t g \, = \, -2 \Big( Ric - \tfrac{1}{2(n-1)} R g \Big)\,,
\end{equation}
must have nonnegative scalar curvature. 
\begin{prop}
\label{sancient}
Let $\big( M^n, g(t) \big)$, $n \geq 3$, $t \in (-\infty, T)$, be a complete ancient solution to the Schouten flow~\eqref{sflow}. Then, $g(t)$ has nonnegative scalar curvature for every $t \in (-\infty, T)$.
\end{prop}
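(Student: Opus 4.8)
The plan is to compute the evolution equation of the scalar curvature along the Schouten flow~\eqref{sflow} and to exploit a cancellation which occurs precisely at the Schouten value $\rho=1/2(n-1)$. First I would start from the general first variation formula for the scalar curvature under an arbitrary deformation $\partial_t g_{ij}=v_{ij}$, namely
\[
\partial_t R \,=\, -\Delta(\trace_{g}v) + \nabla^{i}\nabla^{j}v_{ij} - \langle v,\Ric\rangle\,,
\]
and specialize it to $v_{ij}=-2\big(R_{ij}-\tfrac{1}{2(n-1)}R\,g_{ij}\big)$. Using the contracted second Bianchi identity $\nabla^{j}R_{ij}=\tfrac12\nabla_{i}R$, a direct computation shows that the coefficient multiplying $\Delta R$ equals $1-2(n-1)\rho$ and thus \emph{vanishes} exactly for $\rho=1/2(n-1)$ (a sanity check: at $\rho=0$ one recovers the Ricci flow identity $\partial_t R=\Delta R+2|\Ric|^2$). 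Consequently, along the Schouten flow the scalar curvature obeys the purely reactive equation
\[
\partial_t R \,=\, 2\,|\Ric|^{2} - \tfrac{1}{n-1}\,R^{2}\,.
\]

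The decisive feature here is that no second order term survives: at each fixed $x\in M^{n}$ the function $u(t)=R(x,g(t))$ solves an ordinary differential inequality, so no spatial maximum principle is needed. Invoking the elementary bound $|\Ric|^{2}\geq \tfrac{1}{n}R^{2}$, I would deduce
\[
\partial_t R \,\geq\, \Big(\tfrac{2}{n}-\tfrac{1}{n-1}\Big)R^{2} \,=\, \tfrac{n-2}{n(n-1)}\,R^{2} \,=:\, c_{n}\,R^{2}\,,
\]
with $c_{n}>0$ since $n\geq 3$.

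Finally, I would argue by contradiction through a backward ODE estimate. Suppose $R(x_{0},g(t_{0}))=-a$ with $a>0$ at some point and time. Since $u'\geq c_{n}u^{2}\geq 0$, the function $u$ is nondecreasing, hence $u(t)\leq -a<0$ for all $t\leq t_{0}$. Setting $\phi=-1/u>0$ on $(-\infty,t_{0}]$, the inequality $u'\geq c_{n}u^{2}$ rewrites as $\phi'=u'/u^{2}\geq c_{n}$, so that integrating from $t$ to $t_{0}$ yields $\phi(t)\leq \tfrac1a-c_{n}(t_{0}-t)$. For $t<t_{0}-\tfrac{1}{c_{n}a}$ the right hand side is negative, contradicting the positivity of $\phi$. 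Since the solution is ancient, the finite time $t_{0}-\tfrac{1}{c_{n}a}$ lies in $(-\infty,T)$, where $g(t)$ is smooth; this is absurd, and therefore $R\geq 0$ for every $t\in(-\infty,T)$.

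The main obstacle is really the first step: carrying out the variational computation carefully and recognizing that the diffusion term drops out exactly at the Schouten parameter. Once the evolution of $R$ is reduced to a pointwise ODE inequality, the rest is elementary and, notably, requires no spatial maximum principle; completeness enters only to guarantee that $g(t)$ is a genuine smooth solution on all of $M^{n}\times(-\infty,T)$. It is also worth emphasizing that for a general $\rho$--Einstein flow the surviving coefficient $1-2(n-1)\rho$ is nonzero, so the present pointwise argument is specific to the Schouten case.
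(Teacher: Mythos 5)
Your proposal is correct and follows essentially the same route as the paper: both derive the purely reactive evolution $\partial_t R = 2|\Ric|^2 - \tfrac{1}{n-1}R^2$ at the Schouten value, bound it below by $\tfrac{n-2}{n(n-1)}R^2$ via $|\Ric|^2 \geq \tfrac1n R^2$, and then exploit ancientness through the Riccati ODE. The only cosmetic difference is that the paper invokes the ODE comparison principle with the explicit solution and lets $t_0 \to -\infty$, whereas you integrate $\phi = -1/u$ directly to reach the same finite-time blow-up contradiction.
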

\begin{proof}
Using the general formula for the first variation of the scalar curvature (see~\cite[Theorem 1.174]{besse}) it is immediate to obtain the following evolution of $R$
\begin{equation}
\partial_t R \, = \, 2|Ric|^2 - \frac{1}{n-1} R^2 \, .
\end{equation}
In particular, for every $p \in M^n$, one has that 
$$
\partial_t R \, \geq \, \frac{n-2}{n(n-1)} R^2 \, .
$$
Thus, at every given point $p$, the scalar curvature is a nondecreasing in $t$. We fix now $t \in (-\infty, T)$ and we choose $t_0 \in (\infty, t)$. By the ODE comparison principle, one has that 
$$
R(t) \, \geq \, \frac{ n (n-1) R(t_0)}{n(n-1)-(n-2) R(t_0) (t-t_0)} \, .
$$
If $R(t_0) \geq 0$, then $R(t)$ is nonnegative, by monotonicity. Hence, we assume $R(t_0)$ to be strictly negative and we let $t_0$ tend to $- \infty$, obtaining $R(t) \geq 0$. Since both $p \in M^n$ and $t \in (\infty, T)$ were chosen arbitrarily, the proof is complete. 
\end{proof}
An immediate consequence of Proposition~\ref{sancient} is the following corollary.
\begin{cor}
\label{spsc}
Let $(M^n, g)$, $n\geq3$, be a complete shrinking or steady Schouten soliton. Then, $g$ has nonnegative scalar curvature. 
\end{cor}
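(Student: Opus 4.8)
The plan is to realize the soliton metric as the time-zero slice of a self-similar solution to the Schouten flow~\eqref{sflow} and then invoke Proposition~\ref{sancient}. Recall that a Schouten soliton is precisely a gradient $\rho$--Einstein soliton with $\rho = 1/2(n-1)$, so the self-similar solution recalled at the beginning of Section~\ref{sect-esol} applies: setting $g(t) = (1-2\lambda t)\,\varphi_t^*(g)$, where $\varphi_t$ is the flow of $Y(t) = \nabla f/(1-2\lambda t)$ with $\varphi_0 = \mathrm{id}_{M^n}$, one obtains a solution of~\eqref{sflow} on the time interval where $1-2\lambda t > 0$.

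First I would check that this solution is ancient. In the steady case $\lambda = 0$ the family reduces to $g(t) = \varphi_t^*(g)$, which is defined for every $t \in \RR$; in the shrinking case $\lambda > 0$ the condition $1 - 2\lambda t > 0$ reads $t < 1/(2\lambda)$, so $g(t)$ is defined on $(-\infty, 1/(2\lambda))$. In either case the interval of definition contains a half-line $(-\infty, T)$, so $\big(M^n, g(t)\big)$ is ancient.

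Next I would verify completeness of each slice $g(t)$. Since $g(t)$ is a constant positive multiple of the pullback $\varphi_t^*(g)$ of the complete metric $g$ by a diffeomorphism, it is itself complete, provided the diffeomorphisms $\varphi_t$ are globally defined. This is the point requiring the most care: one must ensure that the flow of the time-dependent gradient field $Y(t)$ is complete for all $t < T$, so that $\varphi_t$ exists as a genuine diffeomorphism of $M^n$. On a complete gradient soliton this is guaranteed by the standard completeness results for the potential vector field, and it is exactly here that the completeness hypothesis on $(M^n, g)$ enters.

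With the ancient solution in hand, the conclusion is immediate. By Proposition~\ref{sancient}, $g(t)$ has nonnegative scalar curvature for every $t$ in its interval of definition. Evaluating at $t = 0$, where $g(0) = g$ because $\varphi_0 = \mathrm{id}_{M^n}$ and $1 - 2\lambda \cdot 0 = 1$, yields $R \geq 0$ on $(M^n, g)$, as desired. Alternatively, one may track the scaling $R_{g(t)} = (1-2\lambda t)^{-1}\,(R_{g} \circ \varphi_t)$ and use the positivity of the factor $1-2\lambda t$ together with the surjectivity of $\varphi_t$ to reach the same conclusion.
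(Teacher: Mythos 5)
Your proof is correct and follows exactly the paper's route: the paper deduces this corollary immediately from Proposition~\ref{sancient} by viewing the soliton as the time-zero slice of the self-similar solution $g(t)=(1-2\lambda t)\,\varphi_t^*(g)$ recalled at the start of Section~\ref{sect-esol}, which is ancient precisely in the steady ($\lambda=0$) and shrinking ($\lambda>0$) cases. You have simply filled in the details the paper leaves implicit -- ancientness of the time interval, completeness of each slice, evaluation at $t=0$ -- and you even flag the one genuinely delicate point (global existence of the flow of $Y(t)=\nabla f/(1-2\lambda t)$) that the paper itself takes for granted when it asserts that solitons generate self-similar solutions.
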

We focus now our attention on the steady case and we prove the following theorem.
\begin{teo}
\label{teo-ssteady}
Every complete gradient steady Schouten soliton is trivial, hence Ricci flat.
\end{teo}
\begin{proof}
If the soliton is compact, the statement follows from Theorem~\ref{teo-cptesol}, case (i-bis). Thus, we assume $(M^n, g)$, $n\geq 3$, to be complete and noncompact. 

We observe that if $R\equiv 0$ everywhere, then identity~\eqref{equ3}, which in this case reads
\begin{equation}
\label{equ4}
0 = \langle \nabla R,\nabla f\rangle + \frac{1}{n-1} R^{2} - 2|\Ric|^{2}   \,,
\end{equation}
implies at once $\Ric \equiv 0$. Hence, the soliton is trivial.

To prove the statement, we suppose by contradiction that $R_{|p}>0$, for some $p \in M^n$. We claim that the connected component $\Sigma_0$ of the level set of $f$ through $p$ is regular. This follows by observing that at $p$ one has
$$
\langle \nabla R,\nabla f\rangle_{|p} \, \geq \, \frac{n-2}{n(n-1)} R^2_{|p} \, > \, 0 \, ,
$$
where we used the standard inequality $|Ric|^2 \geq R^2/n$. In particular, we have that $|\nabla f| \neq 0$ in $p$. We let $r$ be the signed distance to $\Sigma_0$, defined on a maximal interval $(r_*, r^*)$. By Theorem~\ref{teo-esolrect}, we have that $f$ only depends on $r$ an that $|\nabla f|$ only depends on $r$ as well (to be definite, we choose the sign of $r$ in such a way that $f' = |\nabla f|$). In particular, this implies that $\Sigma_0$ is regular and the claim is proved.

As a next step towards the contradiction, we are going to show that $f'(r)>0$, for $r>0$. First of all, we observe that, by identity~\eqref{equ2} in Theorem~\ref{teo-cptesol}, we have $Ric (\nabla f , \, \cdot \, ) = 0$. Thus, the $(r,r)$--component of equation~\eqref{ssol} reads
$$
f'' \, = \, \frac{1}{2(n-1)} R \,.
$$
Corollary~\ref{spsc} implies that $f''\geq 0$. Since $\Sigma_0$ has been assumed to be a regular level set, we have that $f' (0) >0$. We claim that $r^* = +\infty$. In fact, if it would not be the case, then we would have that $f' (r)\rightarrow 0$, as $r\rightarrow r^*$, which is clearly impossible. This implies that the signed distance is defined on all of $(r_*, +\infty)$ and that $f'$ always stays positive for all $r>0$. 

From~\eqref{equ4} and Proposition~\ref{prop-level} we have
$$
R' f' \, = \, 2|Ric|^2 - \frac{1}{n-1}R^2 \, \geq \, \frac{1}{n-1} R^2 \,\, ,
$$
where we used the inequality $|Ric|^2 \geq R^2/(n-1)$, which is a consequence of $Ric(\nabla f, \, \cdot \,) = 0$. For what we have seen and since $f'(r)>0$ for $r>0$, we infer that $R$ is nondecreasing and in particular it is strictly positive, for $r>0$. Dividing the above inequality by $R^2$ and integrating (by parts) between $0$ and $r>0$, gives 
$$
\frac{f'}{R}(r) \, \leq \, \frac{f'}{R} (0) \, - \, \frac{r}{2(n-1)} \, .
$$
Up to choose $r$ sufficiently large, the right hand side becomes negative, which is a contradiction.
%
\end{proof}

We pass now to consider the case of complete shrinking Schouten solitons. We restrict ourselves to the three--dimensional case and we prove the following Theorem.
\begin{teo}
\label{teo-sshrinking}
Let $(M^3, g)$ be a complete three dimensional gradient shrinking Schouten soliton. Then, it is isometric to a finite quotient of either $\SS^3$, or $\RR^3$ or $\RR \times \SS^2$.
\end{teo}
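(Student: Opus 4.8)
The plan is to treat the compact and the noncompact cases separately. If $M^{3}$ is compact, Theorem~\ref{teo-cptesol} (case (i-bis)) tells us the soliton is trivial, so $\nabla f$ is parallel; since a nonzero parallel gradient field has constant nonvanishing norm, $f$ would have no critical points, which is impossible on a compact manifold. Hence $\nabla f\equiv 0$, $f$ is constant, and the soliton equation reduces to $\Ric=(\tfrac14 R+\lambda)g$, i.e. $g$ is Einstein. Being three--dimensional, $g$ then has constant sectional curvature, and since $\lambda>0$ forces $R=12\lambda>0$, the manifold is a finite quotient of $\SS^{3}$. From now on I assume $M^{3}$ complete and noncompact. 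Two structural facts drive the rest: by Corollary~\ref{spsc} one has $R\geq 0$, and specialising identity~\eqref{equ2} to $\rho=1/2(n-1)$, where the coefficient $1-2(n-1)\rho$ vanishes, gives $\Ric(\nabla f,\,\cdot\,)=0$ everywhere. In particular, wherever $\nabla f\neq 0$, the unit normal $\nu=\nabla f/|\nabla f|$ is a null eigenvector of $\Ric$ and $R_{rr}=\Ric(\nu,\nu)=0$.

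I would first dispose of the case $R\equiv 0$. Then the contracted identity~\eqref{equ3} (with $\rho=1/2(n-1)$) loses all its $R$--terms and reduces to $0=-2|\Ric|^{2}$, so $\Ric\equiv 0$ and $g$ is flat. The soliton equation becomes $\nabla^{2}f=\lambda g$ with $\lambda>0$, whence $f$ is a proper strictly convex exhaustion, $M^{3}$ is diffeomorphic to $\RR^{3}$, and being complete and flat it is isometric to $\RR^{n}$ (or a finite quotient). It thus remains to analyze the case $R>0$ somewhere. On the open set $\{\nabla f\neq 0\}$, rectifiability (Theorem~\ref{teo-esolrect}) together with Proposition~\ref{prop-level} shows that $f,|\nabla f|,R,H$ and the induced scalar curvature $R^{\Sigma}$ all depend only on the signed distance $r$ to a regular level set $\Sigma$, and that each such $\Sigma$ is a surface of constant curvature.

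The heart of the argument is to upgrade this foliation to a genuine warped product with canonical fibers, that is, to prove the level sets are totally umbilic. Here I would use crucially that we are in dimension three and that $\Ric(\nu,\,\cdot\,)=0$: a direct computation with the three--dimensional decomposition of the curvature tensor shows that every mixed component $R(e_{i},e_{j},e_{k},\nu)$ with $e_{i},e_{j},e_{k}\in T\Sigma$ vanishes, so by the Codazzi equation the second fundamental form $h$ is a Codazzi tensor on $\Sigma$. Since its trace $H$ is constant along $\Sigma$ by Proposition~\ref{prop-level}, the contracted Codazzi equation makes $h$ divergence free, and its trace--free part $\mathring h$ is a trace--free, divergence--free Codazzi tensor, i.e. the real part of a holomorphic quadratic differential on the constant curvature surface $\Sigma$. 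On a two--sphere such a differential vanishes, forcing $\mathring h=0$ and umbilicity. \emph{This is the step I expect to be the main obstacle}: controlling $\mathring h$ is exactly forcing the two tangential Ricci eigenvalues to coincide, and the delicate point is to guarantee that the relevant fibers are spheres, rather than flat or hyperbolic surfaces on which $\mathring h$ need not vanish; the constraints $R\geq 0$ and $R_{rr}\equiv 0$ together with the Gauss equation $R^{\Sigma}=R-|h|^{2}+H^{2}$ should be used to rule out the non--spherical alternatives in the regime $R>0$.

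Once umbilicity is established on each end of the foliation and patched across the (at most one) critical point of $f$, the metric takes the form $g=dr\otimes dr+\omega(r)^{2}g_{can}$ with $g_{can}$ of constant curvature, so $(M^{3},g)$ is a warped product with canonical fibers. The relation $\Ric(\nabla f,\,\cdot\,)=0$ now reads $R_{rr}=-(n-1)\omega''/\omega=0$, i.e. $R_{rr}\equiv 0$, so Theorem~\ref{teo-warped-cyl} applies directly and yields that $(M^{3},g)$ is homothetic to $\RR\times\SS^{2}$, to $\RR\times\mathbb{H}^{2}$, or to $\RR^{3}$. The hyperbolic cylinder $\RR\times\mathbb{H}^{2}$ has $R<0$ and is excluded by Corollary~\ref{spsc}, while the flat alternative coincides with the case already handled. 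Collecting the compact and the two noncompact outcomes, $(M^{3},g)$ is isometric to a finite quotient of $\SS^{3}$, of $\RR^{3}$, or of $\RR\times\SS^{2}$, and a final verification confirms that each of these models is indeed a shrinking Schouten soliton for a suitable $\lambda>0$.
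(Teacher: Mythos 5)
Your skeleton matches the paper's at both ends: the compact case via Theorem~\ref{teo-cptesol} (i-bis), the two structural facts $R\geq 0$ (Corollary~\ref{spsc}) and $\Ric(\nabla f,\cdot)=0$ (from~\eqref{equ2}), the disposal of $R\equiv 0$, and the endgame in which $\omega''=0$ forces $\RR\times\SS^2$ or flat $\RR^3$. Your Codazzi computation is also correct: in dimension three, $\Ric(\nu,\cdot)=0$ and the curvature decomposition do give $R_{ijk\nu}=0$, so $h$ is a Codazzi tensor on each regular level set. But there is a genuine gap exactly at the step you flag as ``the main obstacle'': you never prove that some level set has $R^{\Sigma}>0$, i.e.\ that the fibers are spherical, and everything downstream (vanishing of holomorphic quadratic differentials, umbilicity, the warped-product structure, the appeal to Theorem~\ref{teo-warped-cyl}) depends on it. The tools you propose for this -- the Gauss equation together with $R\geq 0$ and $R_{rr}=0$ -- cannot close it pointwise: with $R_{rr}=0$ the Gauss equation reads $R^{\Sigma}=R-|h|^2+H^2$, and since $|\nabla f|\,h_{ij}=R_{ij}-(\tfrac14 R+\lambda)g_{ij}$, the trace-free part of the tangential Ricci tensor -- precisely the quantity you are trying to kill -- enters $|h|^2$ with an unfavorable sign and is not controlled by any of the scalar identities available ($R\geq 0$, $R_{rr}=0$, $f''=\tfrac14R+\lambda$). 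Thus $R^{\Sigma}\leq 0$ everywhere is consistent with all your local information; worse, in that regime the level sets need not be compact, and on noncompact or higher-genus fibers trace-free Codazzi tensors do not vanish, so the quadratic-differential argument collapses rather than merely becoming harder.

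This missing step is in fact the heart of the paper's proof. There, the claim $R^{\Sigma}_{|p}>0$ for some $p$ (claim~\eqref{claim}) is established by contradiction through a global weighted integral argument: assuming $R^{\Sigma}\leq 0$ on $(r_*,+\infty)\times\Sigma_0$, one shows that $0<\int_{\Omega}R^{\Sigma}|\nabla f|^2e^{-f}\,dV_g<+\infty$ on $\Omega=\{1<f\}\cap(0,+\infty)\times\Sigma_0$, which is absurd. The positivity comes from rewriting the integrand via~\eqref{equ5},~\eqref{equ6},~\eqref{equ7} and integrating by parts to get $\int_{\Omega}\big(|\Ric|^2-\tfrac18R^2+2\lambda^2\big)e^{-f}\,dV_g$ plus boundary terms, bounded below by $\int_{\Omega}\big(\tfrac{5}{24}R^2+2\lambda^2\big)e^{-f}\,dV_g>0$; the finiteness and the legitimacy of the integrations by parts require three analytic estimates with no counterpart in your proposal: boundedness of $R$ on $\Omega$ (indeed $R\leq 8\lambda$), quadratic growth of $f$ in the distance (a Schouten analogue of the Cao--Zhou estimates, resting on the gradient bounds~\eqref{grad-ineq}), and polynomial volume growth of the sublevel sets of $f$. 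Some genuinely global input of this kind is unavoidable, and without it your proof cannot be completed; once $R^{\Sigma}>0$ is known, however, your Codazzi route to umbilicity is a legitimate alternative to the paper's appeal to uniformization.
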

\begin{proof}
Up to lift the metric to the universal cover, we can assume, without loss of generality, that $(M^{3},g)$ is simply connected.

If the soliton is compact, it follows from Theorem~\ref{teo-cptesol}, case (i-bis) that $(M^3, g)$ is isometric to $\SS^3$. Thus, we assume $(M^3, g)$ to be complete and noncompact. 

From now on, we also assume that $f$ is nonconstant, otherwise the soliton is trivial and, again, it is isometric to $\SS^3$. In particular, there exists a regular connected component $\Sigma_0$ of some level set of $f$ and we let $r$ be the signed distance to $\Sigma_0$, defined on a maximal interval $(r_*, r^*)$. Here maximality has to be understood in the sense that $|\nabla f| \neq 0$ in $(r_*, r^*) \times \Sigma_0$ and eventually annihilates at the boundary. To be definite, we choose the sign of $r$ in such a way that $f' = |\nabla f|$.  By Theorem~\ref{teo-esolrect} and Proposition~\ref{prop-level}, we also have that $f, |\nabla f|, R$ and $R^{\Sigma}$, which is the scalar curvature induced on the level sets of $f$, only depend on $r$. 

We observe that, by identity~\eqref{equ2} in Theorem~\ref{teo-cptesol}, we have $Ric (\nabla f , \, \cdot \, ) = 0$. Thus, the $(r,r)$--component of equation~\eqref{ssol} reads
$$
f'' \, = \, \frac{1}{4} R + \l\,.
$$
Corollary~\ref{spsc} implies that $f''\geq \l>0 $. Since $\Sigma_0$ has been assumed to be a regular connected component of a level set, we have that $f' (0) >0$. We claim that $r^* = +\infty$. In fact, if it would not be the case, then we would have that $f' (r)\rightarrow 0$, as $r\rightarrow r^*$, which is clearly impossible. This implies that the signed distance is defined on all of $(r_*, +\infty)$ and that $f'$ always stays positive for all $r>0$. 

We claim that 
\begin{eqnarray}
\label{claim}
R^\Sigma_{|p} >0 \, , & & \hbox{for some $p \in (r_*, +\infty) \times \Sigma_0 $}\, .
\end{eqnarray}
We notice that the statement of the theorem is a consequence of this claim. In fact, if $r_0 = dist(p, \Sigma_0)$ and we denote by $\Sigma(r_0)$ the (regular) level set of $f$ through $p$, we have that there exists a maximal tubular neighborhood $U$ of $\Sigma({r_0})$, where the scalar curvature induced on the level sets of $f$ remains strictly positive. Since the manifold is three--dimensional, it follows from the two-dimensional Uniformization Theorem (applied to the level sets of $f$) that in $U$ the metric $g$ is a warped product with canonical fibers of positive constant curvature, that is, with the notations of Section~\ref{sect-warped}, $g = dr\otimes dr + \omega^2(r) g_{\SS^2} $. Moreover, we have that $\omega''=0$ in $U$, since $Ric(\nabla f, \, \cdot\,) = 0$. This implies that $\omega(r) = a (r-r_0) +b$, for some constant $a,b \in \RR$, with $a\geq 0$ and $b>0$. As a consequence, we have that, if $a= 0$, then $g$ is locally isometric to $\RR \times \SS^2$ and $f(r) = \l (r-r_0)^2 + c(r-r_0) +d$. On the other hand, if $a\neq 0$, then $g$ is locally isometric to $\RR^3$ and $f=\frac{\l}{2} (r-r_0)^2 + \frac{\l b}{a} (r-r_0) + e $, for some constants $c,d,e \in \RR$. 

Using the Gauss equation, we get $R^{\Sigma} (r) = 2/b^2$ or $R^{\Sigma}(r) = 2a/(a(r-r_0) +b)^2$, respectively. By the maximality of $U$ and since everything is smooth, we deduce that in the first case $r_*=-\infty$ and $(M^3,g)$ is globally isometric to $\RR \times \SS^2$, whereas, in the second case $-\infty<r_*$ and $(M^3,g)$ is globally isometric to $\RR^3$.

To prove the claim~\eqref{claim}, we argue by contradiction and we suppose that $R^{\Sigma}(r) \leq 0$, for every $r\in (r_* , +\infty)$. Up to add a constant, we can assume that $f(0)=0$. We are going to prove that
\begin{equation}
\label{positive}
0 \,< \,  \int_{ \Omega } R^{\Sigma} \, |\nabla f|^2 \,  e^{-f} dV_g  \, < \, +\infty \, ,
\end{equation}
where $\Omega = \{ 1< f \} \cap (0, +\infty) \times \Sigma_0$, which is clearly a contradiction. As a first step, we want to obtain a suitable expression for the integrand. Using the computations in Proposition~\ref{prop-level}, one has that
$$
R^{\Sigma} \, = \, R + H^2-|h|^2 \, ,
$$
where $ |\nabla f| \, h_{ij} = R_{ij} - (\frac{1}{4} R + \l) g_{ij}$ and $|\nabla f| \, H = \frac{1}{2} R - 2 \l$. Substituting the last two expression, one gets
\begin{equation}
\label{equ5}
R^{\Sigma} \, |\nabla f|^2  \, = \, R \, |\nabla f|^2 - |Ric|^2 + \frac{5}{8} R^2 - \l R + 2 \l^2 \, .
\end{equation}

For the rest of the argument we will assume that all integrals involved are finite and the integration by parts can be performed, which we shall justify after we complete the formal argument. To proceed, we recall the identities~\eqref{equ1} and \eqref{equ3}, which in the present situation read
\begin{eqnarray}\label{equ6}
\langle \nabla R,\nabla f\rangle &  = &  2 \, |\Ric|^{2}  -\, \frac{1}{2} R^2 - 2 \lambda R\,, \\ 
\label{equ7}
\Delta f & = & -\frac{1}{4} R + 3 \l \,.
\end{eqnarray}
A first formal integration by parts using equation~\eqref{equ7} gives
\begin{eqnarray*}
\int_{ \Omega} \langle \nabla R,\nabla f\rangle \, e^{-f} \, dV_g  & = & \int_{ \Omega} \big(  R\, |\nabla f|^2  -  R \Delta f     \big) \, e^{-f} \, dV_{g} \, - \,  \int_{\partial \Omega}R \, |\nabla f| e^{-f} d\sigma_{g} \, \\
& = &  \int_{\Omega} \big(  R\, |\nabla f|^2 +\frac{1}{4} R^2 - 3\l R\,  \big) \, e^{-f} \, dV_{g} \, - \, \int_{\partial \Omega}R \, |\nabla f| e^{-f} d\sigma_{g} \, ,
\end{eqnarray*}
where $d\sigma_g$ is the area element induced by $g$ on the boundary of $\Omega$. Now, using equation~\eqref{equ6}, we get
\begin{equation*}
\int_{ \Omega}   R\, |\nabla f|^2   e^{-f} \, dV_{g}  \, = \,  \int_{ \Omega} \big( \, 2|Ric|^2 - \frac{3}{4} R^2 + \l R \,  \big) \, e^{-f} \, dV_{g} \,
 + \, \int_{\partial \Omega}R \, |\nabla f| e^{-f} d\sigma_{g} \, .
\end{equation*}
Taking advantage of the last expression, we integrate equation~\eqref{equ5}, obtaining
\begin{eqnarray*}
\int_{ \Omega}   R^{\Sigma}\, |\nabla f|^2   e^{-f} \, dV_{g}  & = &  \int_{ \Omega} \big( \, |Ric|^2 - \frac{1}{8} R^2 +2\l^2 \,  \big) \, e^{-f} \, dV_{g} \,
 + \int_{\partial \Omega}R \, |\nabla f| e^{-f} d\sigma_{g}   \\
 & \geq & \int_{ \Omega} \Big( \,  \frac{5}{24} R^2 +2\l^2 \,  \Big) \, e^{-f} \, dV_{g} \,\, > \,\, 0 \, .
\end{eqnarray*}
This proves the first inequality in~\eqref{positive}, concluding the formal argument. 

To complete the proof, we need to justify the integrations by parts, showing that all the integrals involved are finite. This will be done in several steps.

{\bf Step 1.} As a first step, we show that the scalar curvature $R$ is necessarily bounded in $\Omega$. From equation~\eqref{equ6}, we have that 
$$
R' f' \, = \, 2|Ric|^2 - \frac{1}{2}R^2  - 2\l R\, \geq \, \frac{1}{2} R^2 -2 \l R \,\, ,
$$
where we used the inequality $|Ric|^2 \geq R^2/2$, which  follows from the fact that $Ric(\nabla f, \, \cdot \,) =0$. If $R$ would not be bounded, then, for a fixed real number $0<\delta < 1/4$, it would exists a suitable distance $r_\delta>0$ such that $R'f' \geq (\frac{1}{2} - \delta) R^2$ and $R > 8 \lambda$, for every $r>r_\delta$. For what we have seen, since $f'(r)>0$ whenever $r>0$, we infer that $R$ is nondecreasing and in particular it is strictly positive, for $r>r_\delta$. The same argument used in Theorem~\ref{teo-ssteady} shows that 
$$
\frac{f'}{R}(r) \, \leq \, \frac{f'}{R} (r_\delta) \, - \, \Big( \frac{1}{8} -\delta \Big) (r-r_\delta) \, .
$$
Up to choose $r$ sufficiently large, the right hand side becomes negative, which is a contradiction. Hence, we have proved that $R$ must be bounded in $ \Omega$. Implicitly, this argument shows that, in $\Omega$, the scalar curvature $R$ cannot be larger than $8 \l$.

{\bf Step 2.} The second step amounts to prove that for every $p_0 \in \Sigma_0$, there exist positive constants $c_1, c_2$ and $C$ such that for every $p \in \Omega$
\begin{equation}
\label{f-growth}
\frac{1}{C} (d(p) - c_1)^2 \, \leq \,  f(p)  \,\leq \, C(d(p) + c_2)^2 \, ,
\end{equation}
where $d(p) = dist(p, p_0)$. In order to prove the upper bound, we start by observing that, up to choose a sufficiently large constant $a>0$, the quantity $a f (r)- |\nabla f|^2(r)$ is monotonically increasing in $r$, for $r>0$. In fact, from the Schouten soliton equation~\eqref{ssol} we have that
\begin{eqnarray*}
\langle  \nabla (af-|\nabla f|^2) , \nabla f  \rangle & = &  a |\nabla f|^2 - 2 \nabla^2 f (\nabla f, \nabla f) \\
& = & \big(a -2\l - \frac{1}{2} R \big) \, |\nabla f|^2 \, ,
\end{eqnarray*}
which is clearly positive, provided the constant $a>0$ is large enough, since by Step 1 the scalar curvature $R$ is bounded in $\Omega$. On the other hand, it is easy to observe that
\begin{eqnarray*}
\langle  \nabla (|\nabla f|^2-2\l f) , \nabla f  \rangle  \,=\, \frac{1}{2}R |\nabla f|^2 \, > \, 0 \,.
\end{eqnarray*}
Putting these two latter estimates together, we obtain that, for every $r>0$, 
\begin{equation}
\label{grad-ineq}
2\l f(r) + (f'(0))^{2} \,\leq \, |\nabla f|^2(r) \, \leq \, a f(r) +  (f'(0))^2 \, .
\end{equation}
These inequalities play the role of Hamilton's  identity for gradient Ricci solitons (see~\cite{hamilton9}), which turns out to be fundamental in proving growth estimates on potential function. In particular, inequalities~\eqref{grad-ineq} imply that $|\nabla \sqrt{f}|$ is bounded and $\sqrt{f}$ is Lipschitz in $\Omega$. This proves the upper bound in~\eqref{f-growth}.  

To prove the other estimate, we can adapt step by step the proof of the lower bound for the potential function of gradient shrinking  Ricci solitons presented in~\cite[Proposition 2.1]{caozhou}. In fact, the computations in the Schouten soliton case differs from the Ricci soliton one only by some correction terms, involving the scalar curvature. In particular, using the fact that $R \geq 0$, identity $(2.7)$ in~\cite[Proposition 2.1]{caozhou} can be replaced in our case by the matrix inequality
$$
\nabla^2 f \, \geq \, \l g - Ric \, .
$$
All the other estimates in~\cite[Proposition 2.1]{caozhou} remain true till inequality $(2.9)$.

The other key ingredient in the proof by Cao and Zhou is their inequality $(2.10)$, which in the present situation can be replaced by 
$$
\max_{s_0 -1 \leq s \leq s_0} |\nabla_{\dot{\gamma}(s)} f| (\gamma(s)) \, \leq \, a_1 \sqrt{f(\gamma(s_0))} + a_2 \,,
$$
where $\gamma$ is a geodesic starting from $p_0$ and supported in $\{ r>0\}$, and $a_1$ and $a_2$ are suitable positive constants, eventually depending on the constant $a>0$.


{\bf Step 3.} We prove now a volume growth estimate for the sub--level sets of $f$. More precisely, there exists a positive constant $A$, such that
\begin{equation}
\label{vgrowth}
Vol_g (\{ 0<f <s\} \cap \Omega) \, \leq \, A  \, s^{3/2} \, . 
\end{equation}

In the spirit of~\cite{caozhou}, we define in the set $\{r \geq 0\}$ the function $u = 2 \sqrt{f}$ and we set $D(s) = \{2<u<s\} \subset \Omega$. First of all, we notice that an immediate consequence of the double inequality~\eqref{f-growth} proved in Step 2 is the fact that the sets $D(s)$ are compact for every $s>2$. Setting $V(s) = Vol_g(D(s))$, by the co--area formula we have that
$$
V(s) \, = \, \int_2^s dt \int_{\partial D(t)} \frac{1}{|\nabla u|}  \,  dS_g(t) \, ,
$$ 
where $dS_g(\cdot)$ is the area element induced by $g$ on $\partial   D(\cdot)$. We also notice that for every $s>0$, the boundary of $D(s)$ is given by the disjoint union of $\partial^+ D(s) = \{u=s \}$ and $\partial^- D(s)=\{u=2 \}$. Hence, tacking advantage of the rectifiability of $f$, we easily compute
$$
V'(s) \, = \, \int_{\partial D(s)} \frac{1}{|\nabla u|} dS_g (s)\, = \, \frac{ s \, |\partial^+D(s)|}{ 2\, |\nabla f|_{| \partial^+D(s)}}   + \frac{   |\partial^-D(s)|}{  |\nabla f|_{| \partial^-D(s)}}
 \,.
$$ 
Integrating equation~\eqref{equ1} on $D(s)$, we get
\begin{eqnarray*}
3 \l V(s) - \frac{1}{4}\int_{D(s)}R\,dV_{g} &=&  \int_{D(s)}\Delta f\,dV_{g} \\
&=&  \int_{\partial^+D(s)} |\nabla f|\,dS_{g}(s)  \, - \, \int_{\partial^-D(s)} |\nabla f|\,dS_{g}(s)    \\
&=&  |\nabla f|_{\partial^+D(s)} \,  |\partial^+D(s)| \, - \, 
|\nabla f|_{\partial^-D(s)} \,  |\partial^-D(s)|
 \,.
\end{eqnarray*}
Hence, using the formula for $V'(s)$ and the fact that $R$ is nonnegative, we obtain the inequality
$$
3 \l V(s) \, \geq \, \frac{2}{s} \, |\nabla f|^2_{|\partial^+D(s)} \, V'(s) \, - \, \frac{2}{s} \, \frac{\partial^-D(s)}{|\nabla f|_{|\partial^-D(s)}} \, |\nabla f|^2_{\partial^+D(s)} \, - \, |\partial^-D(s)| \, |\nabla f|_{\partial^-D(s)} \, 
$$
Now, we observe that in the present situation, estimates~\eqref{grad-ineq} implies that
$$
\frac{\l}{2} s^{2} \, \leq \, |\nabla f|^2_{|\partial^+D(s)} \, \leq \, \frac{a}{4}s^2  \,+ \, |f'(0)|^2
$$
Combining the last two inequality, we obtain
$$
3 \lambda V(s) \, \geq \, \l s V'(s) - A_1 s - A_2 -\frac{A_3}{s}\, , 
$$
where we set $A_1= a |\partial^-D(s)| / 2 |\nabla f|_{|\partial^-D(s)}$, $A_2 = |\partial^-D(s)| \, |\nabla f|_{|\partial^-D(s)}$ and $A_3 = 2|f'(0)|$. Thus, we have proved that for large enough $s$ 
$$
V'(s) \, \leq \, 3  \, \Big( \frac{V(s)}{s} + \frac{A_1}{\lambda} \Big)  \,.
$$
Setting $W(s) =  (V(s)/s + A_1/\l )$, we have $(W'/W)(s) \leq 3/s$. Integrating this inequality and using the definition of $W$, we get $V(s) \,\leq \, B \, s^{3}$, for some positive constant $B$. Finally, going back to the definition of $V$, we obtain the desired estimate~\eqref{vgrowth}.

Using {\bf Step 1}, {\bf Step 2} and {\bf Step 3} it is now an easy exercise to check that all the integrations by parts performed in the formal argument are justified.
\end{proof}

\

\section{Concluding remarks and open questions}
\label{sect-open}

To conclude, we present a short list of comments and open questions, which could be the subject of further investigation.
   
\begin{enumerate}

\smallskip
\item 
In Theorem~\ref{teo-cptesol} we have seen some triviality results for compact gradient $\rho$--Einstein solitons. It would be interesting to investigate whether in cases $(i)$ and $(ii)$, one could get the same conclusion as in cases $(i-bis)$ and $(iii)$. For example, in analogy with gradient Ricci solitons, we expect that the only compact three--dimensional gradient shrinking $\rho$--Einstein soliton with $\rho < 1/4$ is a quotient of the round sphere $\SS^3$. On the other hand, it would be interesting to construct examples of compact nontrivial gradient $\rho$--Einstein solitons with $\rho < 1/2(n-1)$ in dimension $n\geq 4$. In the case of Ricci solitons, this has been done  by several authors (see~\cite{cao1},~\cite{koiso1} and~\cite{wangzhu}).

\smallskip
\item
In Corollary~\ref{cor-rot}, we have seen that up to homotheties, there exists only one three--dimensional gradient steady $\rho$--Einstein soliton with positive sectional curvature, provided $\rho <0$ or $\rho\geq 1/2$. In reason of Theorems~\ref{teo-warp-steady} and~\ref{teo-lcf} we expect that the same conclusion holds also for $0 < \rho<1/4$, without any further assumption. 
We recall that in Theorem~\ref{teo-ssteady} we have shown that every complete gradient steady Schouten soliton $(\rho = 1/4)$ is trivial. Also notice that for ``$\rho=0$'', this is the Perelman's claim, mentioned in the introduction.

\smallskip

\item
In Corollary~\ref{cor-lcf}, we have seen that, up to homotheties, there exists only one $n$--dimensional locally conformally flat gradient steady $\rho$--Einstein soliton with positive sectional curvature, provided $\rho < 0$ or $\rho\geq 1/2$ and $n\geq 3$. We recall that in Theorem~\ref{teo-ssteady} we have shown that every complete gradient steady Schouten soliton $(\rho = 1/2(n-1))$ is trivial. 
Also notice that for ``$\rho=0$'', the existence of a unique locally conformally flat gradient steady Ricci soliton was already known (see~\cite{caochen} and~\cite{mancat1}). Moreover, in this case the assumption about locally conformally flatness can be replaced with weaker conditions such as the harmonicity of the Weyl tensor or even the Bach flatness~\cite{caochecatmanmaz}. We expect that the same techniques would apply to the case of gradient steady $\rho$--Einstein solitons, with $\rho$ in the same ranges as in Theorem~\ref{teo-warp-steady}.

\smallskip
\item
It would be important to further exploit all the geometric consequences of the rectifiability. A possible direction of investigation is to prove a rigidity results for noncompact gradient shrinking $\rho$--Einstein solitons, in analogy with the case of rectifiable gradient Ricci solitons, studied in~\cite{pw2}. More precisely, we expect 
that for $\rho \leq 1/2(n-1)$ every noncompact gradient shrinking $\rho$--Einstein solitons with nonnegative (radial) sectional curvatures is rigid, namely isometric to a quotient of a direct product of the type $\RR^k \times N^{n-k}$, where $N$ is a $(n-k)$--dimensional compact Einstein manifold, for some $1\leq k \leq n$.

\smallskip
\item
Concerning the analysis of the complete noncompact rotationally symmetric gradient $\rho$--Einstein soliton, it would be interesting to prove the analogous of Theorem~\ref{teo-warp-steady} for shrinking and expanding solitons with positive sectional curvature. 

\end{enumerate}

\

\

\noindent {\bf Added note.} Shortly after this manuscript appeared, S. Brendle posted the article~\cite{brendle3} on the ArXiv, where Perelman's claim is proved.

\

\bigskip

\bibliographystyle{amsplain}
\bibliography{eRecto}

\providecommand{\bysame}{\leavevmode\hbox to3em{\hrulefill}\thinspace}
\providecommand{\MR}{\relax\ifhmode\unskip\space\fi MR }
\providecommand{\MRhref}[2]{%
  \href{http://www.ams.org/mathscinet-getitem?mr=#1}{#2}
}
\providecommand{\href}[2]{#2}
\begin{thebibliography}{10}

\bibitem{besse}
A.~L. Besse, \emph{Einstein manifolds}, Springer--Verlag, Berlin, 2008.

\bibitem{bishoponeill}
R.~L. Bishop and B.~O'Neill, \emph{Manifolds of negative curvature}, Trans.
  Amer. Math. Soc. \textbf{145} (1969), 1--49.

\bibitem{brendle3}
S.~Brendle, \emph{Rotational symmetry of self-similar solutions to the {R}icci
  flow}, ArXiv Preprint Server -- http://arxiv.org, 2012.

\bibitem{cao1}
H.-D. Cao, \emph{Existence of gradient {K}\"ahler-{R}icci solitons}, Elliptic
  and parabolic methods in geometry (Minneapolis, MN, 1994), A K Peters,
  Wellesley, MA, 1996, pp.~1--16.

\bibitem{cao3}
\bysame, \emph{Geometry of {R}icci solitons}, Chinese Ann. Math. Ser. B
  \textbf{27} (2006), no.~2, 121--142.

\bibitem{cao2}
\bysame, \emph{Recent progress on {R}icci solitons}, Adv. Lect. Math. (ALM)
  \textbf{11} (2010), 1--38.

\bibitem{caochenzhu}
H.-D. Cao, B.-L. Chen, and X.-P. Zhu, \emph{Recent developments on {H}amilton's
  {R}icci flow}, Surveys in differential geometry. {V}ol. {XII}. {G}eometric
  flows, vol.~12, Int. Press, Somerville, MA, 2008, pp.~47--112.

\bibitem{caochen}
H.-D. Cao and Q.~Chen, \emph{On locally conformally flat gradient steady
  {R}icci solitons}, ArXiv Preprint Server -- http://arxiv.org, 2009.

\bibitem{caochecatmanmaz}
H.-D. Cao, Q.~Chen, G.~Catino, C.~Mantegazza, and L.~Mazzieri, \emph{{B}ach
  flat gradient {R}icci solitons}, ArXiv Preprint Server -- http://arxiv.org,
  to appear on Calc. Var. Partial Differential Equations., 2011.

\bibitem{caosunzhang}
H.-D. Cao, X.~Sun, and Y.~Zhang, \emph{On the structure of gradient {Y}amabe
  solitons}, ArXiv Preprint Server -- http://arxiv.org, 2011.

\bibitem{caozhou}
H.-D. Cao and D.~Zhou, \emph{On complete gradient shrinking {R}icci solitons},
  J. Diff. Geom. \textbf{85} (2010), no.~2, 175--186.

\bibitem{mancat1}
G.~Catino and C.~Mantegazza, \emph{Evolution of the {W}eyl tensor under the
  {R}icci flow}, Ann. Inst. Fourier \textbf{61} (2011), no.~4, 1407--1435.

\bibitem{catmantmazz}
G.~Catino, C.~Mantegazza, and L.~Mazzieri, \emph{On the global structure of
  conformal gradient solitons with nonnegative {R}icci curvature}, Comm. Cont.
  Math. \textbf{14} (2012), no.~6, 1250045.

\bibitem{mmancatmazrim}
G.~Catino, C.~Mantegazza, L.~Mazzieri, and M.~Rimoldi, \emph{Locally
  conformally flat quasi--{E}instein manifolds}, J. Reine Angew. Math.
  \textbf{2013} (2013), no.~675, 181--189.

\bibitem{chowluni}
B.~Chow, P.~Lu, and L.~Ni, \emph{Hamilton's {R}icci flow}, Graduate Studies in
  Mathematics, vol.~77, American Mathematical Society, Providence, RI, 2006.

\bibitem{dasksesum}
P.~Daskalopoulos and N.~Sesum, \emph{The classification of locally conformally
  flat {Y}amabe solitons}, ArXiv Preprint Server -- http://arxiv.or, 2011.

\bibitem{fishmars}
A.~E. Fischer and J.~E. Marsden, \emph{Linearization stability of nonlinear
  partial differential equations}, Proc. Symp. Pure Math. \textbf{27} (1975),
  219--262.

\bibitem{hamilton1}
R.~S. Hamilton, \emph{Three--manifolds with positive {R}icci curvature}, J.
  Diff. Geom. \textbf{17} (1982), no.~2, 255--306.

\bibitem{hamilton5}
\bysame, \emph{The {R}icci flow on surfaces}, Mathematics and general
  relativity (Santa Cruz, CA, 1986), Contemp. Math., vol.~71, Amer. Math. Soc.,
  Providence, RI, 1988, pp.~237--262.

\bibitem{hamilton9}
\bysame, \emph{The formation of singularities in the {R}icci flow}, Surveys in
  differential geometry, Vol.~II (Cambridge, MA, 1993), Int. Press, Cambridge,
  MA, 1995, pp.~7--136.

\bibitem{HePetWylie}
C.~He, P.~Petersen, and W.~Wylie, \emph{On the classification of warped product
  {E}instein metrics}, ArXiv Preprint Server -- http://arxiv.org, 2010.

\bibitem{ivey1}
T.~Ivey, \emph{Ricci solitons on compact three--manifolds}, Differential Geom.
  Appl. \textbf{3} (1993), no.~4, 301--307.

\bibitem{jpb1}
J.-P.Bourguignon, \emph{Ricci curvature and {E}instein metrics}, Global
  differential geometry and global analysis ({B}erlin, 1979), Lecture Notes in
  Math., vol. 838, Springer, Berlin, 1981, pp.~42--63.

\bibitem{koba2}
O.~Kobayashi, \emph{A differential equation arising from scalar curvature}, J.
  Math. Soc. Japan \textbf{34} (1982), 665--675.

\bibitem{koiso1}
N.~Koiso, \emph{On rotationally symmetric {H}amilton's equation for
  {K}\"ahler-{E}instein metrics}, Recent topics in differential and analytic
  geometry, Adv. Stud. Pure Math., vol.~18, Academic Press, Boston, MA, 1990,
  pp.~327--337.

\bibitem{leltop}
C.~De Lellis and P.~M. Topping, \emph{Almost-{S}chur lemma}, Calc. Var.
  \textbf{43} (2012), 347--354.

\bibitem{perel1}
G.~Perelman, \emph{The entropy formula for the {R}icci flow and its geometric
  applications}, ArXiv Preprint Server -- http://arxiv.org, 2002.

\bibitem{perel3}
\bysame, \emph{Finite extinction time for the solutions to the {R}icci flow on
  certain three--manifolds}, ArXiv Preprint Server -- http://arxiv.org, 2003.

\bibitem{perel2}
\bysame, \emph{Ricci flow with surgery on three--manifolds}, ArXiv Preprint
  Server -- http://arxiv.org, 2003.

\bibitem{pw2}
P.~Petersen and W.~Wylie, \emph{On gradient {R}icci solitons with symmetry},
  Proc. Amer. Math. Soc. \textbf{137} (2009), no.~6, 2085--2092.

\bibitem{wangzhu}
X.~J. Wang and X.~H. Zhu, \emph{K\"ahler-ricci solitons on toric manifolds with
  positive first chern class}, Adv. Math. \textbf{188} (2004), no.~1, 87--103.

\bibitem{will}
C.~M. Will, \emph{The {C}onfrontation between {G}eneral {R}elativity and
  {E}xperiment}, Living Rev. Rel. \textbf{9} (2006).

\end{thebibliography}

\bigskip
\bigskip

\end{document}